\newtheorem{theorem}{Theorem}[section]
\newtheorem*{theorem*}{Theorem}
\newtheorem{lemma}{Lemma}[section]
\newtheorem{corollary}[theorem]{Corollary}
\newtheorem{proposition}{Proposition}[section]
\newtheorem{conjecture}[theorem]{Conjecture}
\newtheorem{remark}[theorem]{Remark}
\def\Ric{\text{Ric}}
\def\p{\partial}
\def\aint{\frac{\ \ }{\ \ }{\hskip -0.4cm}\int}
\def\Ric{\operatorname{Ric}}
\numberwithin{equation}{section}
\begin{document}
	\title[Comparison and Vanishing Theorems]{Comparison and vanishing  theorems for K\"ahler manifolds}

\author{Lei Ni}\thanks{The research of LN is partially supported by NSF grant DMS-1401500.  }
\address{Lei Ni. Department of Mathematics, University of California, San Diego, La Jolla, CA 92093, USA}
\email{lni@math.ucsd.edu}

\author{Fangyang Zheng} \thanks{The research of FZ is partially supported by a Simons Collaboration Grant 355557.}
\address{Fangyang Zheng. Department of Mathematics,
The Ohio State University, Columbus, OH 43210,
USA}
\email{{zheng.31@osu.edu}}


\maketitle
\begin{abstract} In this paper, we consider orthogonal Ricci curvature $Ric^{\perp}$ for K\"ahler manifolds, which is a curvature condition closely related to Ricci curvature and holomorphic sectional curvature. We prove comparison theorems and a vanishing theorem related to these curvature conditions, and construct various examples to illustrate their subtle relationship. As a consequence of the vanishing theorem, we show that any compact K\"ahler manifold with positive orthogonal Ricci curvature must be projective. The simply-connectedness is also shown when the complex dimension is smaller than five.
\end{abstract}

\section{Introduction}

\medskip

There are several relatively recent works on comparison theorems on K\"ahler manifolds. In \cite{LW}, for a K\"ahler manifold $(M^m, g)$, Li and Wang introduced the condition ``bisectional curvature bounded from below by a constant $\lambda$" defined as
\begin{equation}\label{eq:lw-1}
R(Z, \overline{Z}, W, \overline{W})\ge \lambda \left( |Z|^2 |W|^2+\left|\langle Z, \overline{W}\rangle\right|^2\right)
\end{equation}
for any $(1, 0)$ vectors $Z, W\in T'M$ satisfying either $\langle Z, \overline{W}\rangle =0$ or $Z=W$, where the complexified tangent space $T_\mathbb{C} M=T'M \oplus T''M$, namely is  decomposed (with respect to the almost complex structure $C$) into the holomorphic subspace ($T'M$) and antiholomorphic subspace ($T''M$). Let $m$ be the complex dimension of $M$ and $n=2m$ be the real dimension.  Here $\langle \cdot, \cdot\rangle$ is the bilinear extension of the Riemannian product and the curvature $R$ follows the convention of \cite{Tian}.  Under this condition the authors derived the complex Hessian comparison theorem for the distance function  $\rho_{p}(x)$ to a fixed point $p$ of any K\"ahler manifold $M$ with the bisectional curvature bounded from below, with the corresponding  distance function of a complex space form with constant holomorphic sectional curvature $2\lambda$. (For the case $\lambda=0$, the result can also be derived from the Li-Yau-Hamilton type estimate for the heat kernel \cite{Cao-Ni}.)  The authors of \cite{LW}  also derived a diameter estimate (for $\lambda>0$)  as well as a volume comparison result. More recently, the volume comparison result was generalized to the distance function to a complex submanifold by Tam and Yu in \cite{TamYu}.  The reformulation in \cite{TamYu} seems stronger than the original one stated above by demanding (\ref{eq:lw-1}) on all $Z, W\in T'M$. In \cite{Liu},  the partial complex Hessian (only in the complex plane spanned by $\{\nabla \rho, C( \nabla \rho)\}$ with $C$ being the almost complex structure) comparison theorem was proved  under the assumption of the holomorphic sectional curvature is nonnegative. This result plays the crucial role \cite{Liu} in establishing the three-circle property for holomorphic functions on such K\"ahler manifolds. More recently, in \cite{XYang}, the projectivity was proved for compact K\"ahler manifolds with positive holomorphic sectional curvature. The common theme of the papers involving the comparison theorems  is that the  results were derived by applying the Bochner formula  to the length of the gradient $\|\nabla \rho\| (=1)$ in a similar spirit as the proofs of the Hessian and Laplacian comparison theorems in \cite{BZ,HK} (cf. also \cite{petersen}, where the Hessian comparison was made almost trivial for the case that the curvature is bounded from above), and \cite{GHL} respectively. Namely they are based on Ricatti's type inequality on the Hessian of $\rho$, or  the Bochner formula applying to $\|\nabla \rho\|$, instead of the more classical approach of Rauch via the comparison of the  index forms and Jacobi fields. On the other hand, the consideration via the second variation and the index forms has a lot of success in understanding the geometry and topology of the Riemannian manifolds. Even for K\"ahler manifolds with positive holomorphic sectional curvature there exists  the work of Tsukamoto on the diameter estimate and the simply connectedness \cite{Tsu}. Note that the diameter estimate of Li-Wang is a special result of Tsukamoto since the lower bound on the bisectional curvature posed by (\ref{eq:lw-1}) implies that the holomorphic sectional curvature $H(Z)=R(Z, \overline{Z}, Z, \overline{Z})\ge 2\lambda |Z|^4$. There are also Leftschez type theorems  for complex or Levi flat real submanifolds in a nonnegatively curved K\"ahler manifold  utilizing the index estimates of the energy functional such as \cite{Schoen-Wolfson} and \cite{Ni-Wolfson}.  Despite the work and the effort mentioned above the K\"ahler analogue of the sharp volume comparison (Bishop type) and the sharp diameter estimate (Bonnet-Myers type) are still elusive.  One goal of this paper is to apply the second variational/index form consideration to the K\"aher setting and prove several comparison and rigidity results generalizing some of  the results mentioned above with the hope of  bridging the gap between the Riemannian and K\"ahler setting. Another goal is to study a condition which is complementary to the holomorphic sectional curvature. Namely we shall study the comparison and vanishing theorems under  conditions on the {\it orthogonal Ricci curvature}. The vanishing theorem proved in this paper implies the projective embedding (namely projectivity of the underlying K\"ahler manifold) related to this curvature condition. This suggests that an algebraic geometric characterization of such K\"ahler manifolds perhaps is an interesting problem in view of the Fano varieties characterized via the Yau's solution to the Calabi conjecture.

Before getting into the statement of the result, we first recall various notions of  curvature for K\"ahler manifolds. If $Z=\frac{1}{\sqrt2} \left(X-\sqrt{-1} C(X)\right), W=\frac{1}{\sqrt2} \left(Y-\sqrt{-1} C(Y)\right)$, the first Bianchi gives the following expansion in terms of real vectors
$$
R(Z, \overline{Z}, W, \overline{W})= R(X, C(X), C(Y), Y)=R(X, Y, Y, X)+R(X, C(Y), C(Y), X).
$$

Besides the bisectional curvature, there exists the notion of the orthogonal bisectional curvature $R(Z, \overline{Z}, W, \overline{W})$ for any pair $Z, W$ with $\langle Z, \overline{W}\rangle=0$. Note that $\langle Z, \overline{W}\rangle =0$ means that $\langle X, Y\rangle=\langle X, C(Y)\rangle=0$. The holomorphic sectional curvature can be expressed as
 $$R(Z, \overline{Z}, Z, \overline{Z})=R(X, C(X), C(X), X).$$
For the sake of convenience in writing, we will sometimes use $H$ to denote the holomorphic sectional curvature and $B^{\perp}$ to denote the orthogonal bisectional curvature.

Clearly the lower bound on holomorphic sectional curvature,  on orthogonal bisectional curvature, or on bisectional curvature, are quite different assumptions. There are unitary symmetric metrics on $\mathbb{C}^m$ with nonnegative orthogonal bisectional curvature (abbreviated as (NOB)) but not with nonnegative bisectional curvature. There even exists algebraic K\"ahler curvature $R$ with nonnegative holomorphic sectional curvature and nonnegative orthogonal bisectional curvature, but not nonnegative bisectional curvature \cite{Tam}. There is also a weaker notion called {\it quadratic orthogonal bisectional curvature,} or {\it quadratic bisectional curvature} for short, denoted as $QB$, which is defined for any real vector $\{a_i\}_{i=1}^m$ and any unitary
 frame $\{E_i\}$ of $T'M$,
$QB(a)=\sum_{i, j}R_{i\bar{i}j\bar{j}}(a_i-a_j)^2$.  Its nonnegativity, abbreviated as (NQOB), means that $QB(a)\geq 0$ for any $a$ and any unitary frame $\{ E_i\}$. This curvature condition was formally introduced by Wu-Yau-Zheng in 2009 in \cite{WuYauZheng}, although it appeared implicitly in the work of Bishop of Goldberg \cite{BishopGoldberg} in 1965 already, where they showed that compact K\"ahler manifold with positive bisectional curvature must have its second Betti number equal to $1$. The  first example of compact K\"ahler manifold with (NQOB) but not (NOB) was established by Li-Wu-Zheng \cite{LiWuZheng} in 2013, and shortly after, Chau-Tam \cite{Chau-Tam} fully classified all (NQOB) K\"ahler C-spaces of classical types. See also \cite{NT2} for the role of (NQOB) in solving the Poincar\'e-Lelong equation. More recently in \cite{Ni-Niu2}, a gap theorem and a Liouville theorem were proved under (NOB) and nonnegative Ricci curvature.

Note that the nonnegative holomorphic sectional curvature does not imply the nonnegative Ricci curvature as shown by Hitchin's examples \cite{Hitchin}. On the other hand, there exists the notion of  {\it antiholomorphic Ricci curvature $Ric^{\perp}(X, X)$ }(for any real vector $X$, coined for example in \cite{Mi-Pal}, but no geometric implication of it was given) which is defined  as
$$
Ric^{\perp}(X, X)=\sum R(X, e_i, e_i, X)=Ric(X, X)-\frac{1}{|X|^2}R(X, C(X), C(X), X),
$$
where $\{e_i\}$ is any orthonormal frame of $\{X, C(X)\}^{\perp}$. In view of the above notions of (NOB) and (NQOB) it seems more sensible to called it {\it orthogonal Ricci curvature}. Let $E_i=\frac{1}{\sqrt2}(e_i-\sqrt{-1} C(e_i))$ be a unitary frame such that $e_1=\frac{X}{|X|}=\tilde{X}$. Following the convention $e_{n+i}=C(e_i)$, direct calculation shows that
\begin{eqnarray*}
\frac{1}{|X|^2}\Ric^{\perp}(X, X)&=&Ric^{\perp}(\tilde{X}, \tilde{X})=Ric(\tilde{X}, \tilde{X})-R(\tilde{X}, C(\tilde{X}), C(\tilde{X}), \tilde{X})\\
&=& Ric(E_1, \overline{E}_1)-R(E_1, \overline{E_1}, E_1, \overline{E}_1)=\sum_{j=2}^{m}R(E_1, \overline{E}_1, E_j, \overline{E}_j).
\end{eqnarray*}
Hence $Ric^{\perp}(\tilde{X}, \tilde{X})=Ric(E_1, \overline{E}_1)-R_{1\bar{1}1\bar{1}}$. Here we have also  used that $Ric(E_i,\overline{E}_i)=Ric(e_i, e_i)$. By Proposition 3.1 of \cite{HT} (see also \cite{Niu}), the nonnegativity of quadratic orthogonal bisectional curvature implies the nonnegativity of the orthogonal Ricci curvature. On the other hand, the example constructed there shows that there exist some unitary symmetric metrics on $\mathbb{C}^m$ such that the curvature has  nonnegative quadratic orthorgonal bisectional curvature (hence the orthogonal Ricci curvature is nonnegative), but the Ricci curvature is negative somewhere. In the later section of this paper we show examples of metrics with even nonnegative orthogonal bisectional curvature (which is stronger than the (NQOB)), but Ricci curvature, as well as the holomorphic sectional curvature,  can be negative somewhere. This shows that the $Ric^{\perp}(\cdot, \cdot)$ is a sensible notion for K\"ahler manifolds, and is different from the Ricci tensor. Nevertheless (NQOB) does imply the nonnegativity of the scalar curvature as shown in \cite{Chau-Tam, Niu}. In fact the non-negativity of $Ric^{\perp}$ also implies the nonnegativity of the scalar curvature from the following estimate.

\begin{lemma}\label{lemma-easy} The nonnegative orthogonal Ricci curvature implies the nonnegativity of the scalar curvature $S$. In fact there exists the following pointwise estimate for $m\ge 2$:
$$
S(y)\ge \frac{2m(m+1)}{m-1} \min_{Z\in \mathbb{S}^{2m-1}_y\subset T'M   } Ric^{\perp}(Z, \overline{Z}).
$$
\end{lemma}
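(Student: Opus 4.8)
The plan is to integrate the pointwise identity
\[
Ric^{\perp}(Z,\overline Z)=Ric(Z,\overline Z)-R(Z,\overline Z,Z,\overline Z),\qquad Z\in\mathbb{S}^{2m-1}_y\subset T'_yM,
\]
recorded just before the statement, over the unit sphere and then compare with the hypothesis $Ric^{\perp}\ge\mu(y):=\min_{Z}Ric^{\perp}(Z,\overline Z)$. Denote by $\langle f\rangle_y$ the average of a function $f$ on $\mathbb{S}^{2m-1}_y$ with respect to the normalized invariant measure. Fixing a unitary frame $\{E_i\}_{i=1}^m$ of $T'_yM$ and writing $Z=\sum_i z_iE_i$ with $\sum_i|z_i|^2=1$, everything reduces to the two standard moment identities on the complex sphere, $\langle z_i\overline z_j\rangle_y=\frac1m\delta_{ij}$ and $\langle z_i\overline z_jz_k\overline z_l\rangle_y=\frac1{m(m+1)}(\delta_{ij}\delta_{kl}+\delta_{il}\delta_{kj})$.

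From the first identity, $\langle Ric(Z,\overline Z)\rangle_y=\frac1m\sum_{i=1}^m Ric(E_i,\overline E_i)=\frac{S(y)}{2m}$, where I use $Ric(E_i,\overline E_i)=Ric(e_i,e_i)$, $\sum_{j=1}^{2m}Ric(e_j,e_j)=S$, and $Ric(C(e_i),C(e_i))=Ric(e_i,e_i)$ for K\"ahler metrics. From the second identity one gets the Berger-type averaging formula
\[
\langle R(Z,\overline Z,Z,\overline Z)\rangle_y=\frac1{m(m+1)}\sum_{i,j}\bigl(R_{i\bar i j\bar j}+R_{i\bar j j\bar i}\bigr)=\frac{2}{m(m+1)}\sum_{i,j}R_{i\bar i j\bar j}=\frac{S(y)}{m(m+1)},
\]
where the middle equality uses the K\"ahler symmetry $R_{i\bar j j\bar i}=R_{j\bar j i\bar i}$ to merge the two sums and the last one is again $\sum_{i,j}R_{i\bar i j\bar j}=\sum_i Ric(E_i,\overline E_i)=S(y)/2$. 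Subtracting, $\langle Ric^{\perp}(Z,\overline Z)\rangle_y=\frac{S(y)}{2m}-\frac{S(y)}{m(m+1)}=\frac{(m-1)S(y)}{2m(m+1)}$ for $m\ge2$.

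Since $Ric^{\perp}(Z,\overline Z)\ge\mu(y)$ pointwise on $\mathbb{S}^{2m-1}_y$, the average is at least $\mu(y)$; hence $\frac{(m-1)S(y)}{2m(m+1)}\ge\mu(y)$, which rearranges to $S(y)\ge\frac{2m(m+1)}{m-1}\mu(y)$, and in particular $S(y)\ge0$ when $\mu(y)\ge0$. (The constant is sharp, with equality on $\mathbb{C}P^m$.) There is no serious obstacle; the only points requiring care are the bookkeeping of normalizations — especially the factor $2$ between the Riemannian scalar curvature and the complex trace $\sum_i Ric(E_i,\overline E_i)$ — and the correct use of the first Bianchi / K\"ahler symmetries so that the two terms in Berger's formula combine. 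An equivalent route averages the identity $\sum_i Ric^{\perp}(E_i,\overline E_i)=\tfrac12S(y)-\sum_i R_{i\bar i i\bar i}$ over all unitary frames rather than over the sphere and yields the same constant.
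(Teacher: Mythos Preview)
Your proof is correct and follows essentially the same approach as the paper: both compute the spherical average of $Ric^{\perp}(Z,\overline Z)$ by subtracting Berger's average for $H$ from the average of $Ric$, obtain the identity $\langle Ric^{\perp}\rangle = \frac{m-1}{2m(m+1)}S$, and then bound the average below by the minimum. The only difference is that you derive the two averaging formulas from the moment identities on the complex sphere, whereas the paper simply quotes them.
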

Since the nonnegativity of the quadratic orthogonal bisectional curvature (NQOB) implies the nonnegativity of the orthogonal Ricci (abbreviated as $Ric^{\perp}\ge 0$),  Lemma \ref{lemma-easy} implies the result on the nonnegativity of the scalar curvature of \cite{Chau-Tam, Niu}).

Given any fixed point $p$, let $\rho(x)$ be the distance function to $p$. The Hessian of $\nabla^2 \rho(\cdot, \cdot)$ can be extended bi-linearly to $T_p^\mathbb{C} M$. Direct calculation shows that
$$
\nabla^2 \rho(E_i, \overline{E}_i)=\frac{1}{2}\left(\nabla^2 \rho( e_i, e_i)+\nabla^2 \rho(C(e_i), C(e_i))\right).
$$
This shows that $\Delta \rho=\sum_{i=1}^m \nabla^2 \rho((E_i, \overline{E}_i)=\frac{1}{2}\sum_{i=1}^m \left(\nabla^2 \rho( e_i, e_i)+\nabla^2 \rho(C(e_i), C(e_i))\right)$. Here $\{E_i\}$ is a unitary frame.
We define $\Delta^{\perp}$ the orthogonal Laplacian to be
$$
\Delta^{\perp} \rho=\Delta \rho-\nabla^2 \rho(Z, \overline{Z})
$$
where $Z=\frac{1}{\sqrt2}(\nabla \rho-\sqrt{-1}C(\nabla \rho))$. We call the last term {\it holomorphic Hessian } of $\rho$. The first comparison theorem we prove is on the orthogonal Laplacian assuming the orthogonal Ricci curvature comparison and the holomorphic Hessian comparison assuming the holomorphic sectional curvature comparison.

\begin{theorem}\label{thm-com1}(i) Let $(M^m, g)$ be a K\"ahler manifold with $Ric^{\perp}\ge (m-1) \lambda $. Let $(\tilde{M}, \tilde{g})$ be the complex space form with constant holomoprhic sectional curvature $2\lambda$. Let $\rho(x)$ be the distance function to a point $p$ (and $\tilde{\rho}$ be the corresponding distance function to a point $\tilde{p}$). Then for point $x$, which is not in the cut locus of $p$,
$$
\Delta^{\perp} \rho (x) \le \Delta^{\perp}\tilde{\rho}\left.\right|_{\tilde{\rho}=\rho(x)}=(m-1)\cot_{\frac{\lambda}{2}}(\rho).
$$
(ii)  Let $(M^m, g)$ be a K\"ahler manifold with holomorphic sectional curvature $H\ge 2\lambda $. Let $(\tilde{M}, \tilde{g})$ be the complex space form with constant holomoprhic sectional curvature $2\lambda$. Let $\rho(x)$ ($\tilde{\rho}$) be the distance function to a complex submanifold $P$ in $M$ ($\tilde{P}$ in $\tilde{M}$). Then for $x$ not in the focal locus of $P$,
$$
\left.\nabla^2\rho (Z, \overline{Z})\right|_{x}   \le \left. \nabla^2\tilde{\rho}(\tilde{Z}, \overline{\tilde{Z}})\right|_{ \tilde{\rho}=\rho(x)}.
$$
Here $Z=\frac{1}{\sqrt{2}}(\nabla \rho-\sqrt{-1} C(\nabla \rho))$, $\tilde{Z}=\frac{1}{\sqrt{2}}(\nabla \tilde{\rho}-\sqrt{-1} C(\nabla \tilde{\rho}))$. In particular, if $\lambda=0$ and $\tilde{P}$ is a point
$$
\left. \nabla^2\rho (Z, \overline{Z})\right|_{x} \le \frac{1}{2\rho(x)} \iff \nabla^2 \log \rho (Z, \overline{Z})\le 0.
$$
\end{theorem}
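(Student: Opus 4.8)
The plan is to run the classical second–variation / index–form comparison along minimal geodesics — as in the Riemannian Hessian and Laplacian comparison theorems — using crucially that on a K\"ahler manifold the complex structure $C$ is parallel, so that along a geodesic $\gamma$ the orthogonal splitting $T = \RR\gamma' \oplus \RR C(\gamma') \oplus V$, with $V = \{\gamma', C(\gamma')\}^{\perp}$, is preserved by parallel transport.

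For (i): fix $x$ outside the cut locus of $p$, and let $\gamma:[0,r]\to M$ be the minimal unit-speed geodesic from $p$ to $x$, $r = \rho(x)$; then $\nabla\rho = \gamma'$ along $\gamma$ and $\nabla^2\rho(\gamma',\gamma') = 0$. Choosing a unitary frame with $E_1 = Z = \frac1{\sqrt2}(\gamma' - \sqrt{-1}\,C(\gamma'))$, the definitions in the Introduction give $\Delta^{\perp}\rho(x) = \sum_{i=2}^m \nabla^2\rho(E_i,\overline E_i) = \tfrac12\sum_{a=1}^{2m-2}\nabla^2\rho(f_a,f_a)$, where $\{f_a\}$ is an orthonormal basis of $V_x$. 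I would bound each term by the index lemma: $\nabla^2\rho(f_a,f_a)(x) = I_r(J_a,J_a)$ where $J_a$ is the Jacobi field with $J_a(0)=0$, $J_a(r)=f_a$, and $I_r(J_a,J_a) \le I_r(W_a,W_a)$ for the test field $W_a(t) = \frac{\phi(t)}{\phi(r)}F_a(t)$, where $\{F_a(t)\}$ is a parallel orthonormal frame of $V_{\gamma(t)}$ with $F_a(r)=f_a$ (legitimate because $C$, hence $V$, is parallel along $\gamma$) and $\phi = \phi_{\lambda/2}$ solves $\phi'' + \tfrac\lambda2\phi = 0$, $\phi(0)=0$, $\phi'(0)=1$. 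Since $x$ is not in the cut locus there are no conjugate points of $p$ on $(0,r]$, so the index lemma applies. Summing over $a$, the curvature term is $\sum_a R(F_a,\gamma',\gamma',F_a) = \Ric^{\perp}(\gamma',\gamma') \ge (m-1)\lambda = (2m-2)\tfrac\lambda2$ — exactly the hypothesis; an integration by parts using $\phi'' + \tfrac\lambda2\phi = 0$ then reduces $\sum_a I_r(W_a,W_a)$ to $(2m-2)\,\phi'(r)/\phi(r) = (2m-2)\cot_{\lambda/2}(r)$, and halving gives (i) (with the usual understanding, when $\lambda>0$, that $\rho$ stays below the first zero of $\phi_{\lambda/2}$).

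For (ii): let $\gamma:[0,r]\to M$ be a minimal unit-speed geodesic from $P$ to $x$, so $\gamma'(0)\perp T_{\gamma(0)}P$; since $P$ is a complex submanifold, $T_{\gamma(0)}P$ is $C$-invariant, whence $C(\gamma'(0))\perp T_{\gamma(0)}P$ too. As $\nabla^2\rho(\gamma',\gamma') = 0$, we have $\nabla^2\rho(Z,\overline Z)(x) = \tfrac12\nabla^2\rho(C(\gamma'),C(\gamma'))(x)$, so it is enough to estimate the second factor. I would apply the submanifold index lemma — available since $x$ is not in the focal locus of $P$ — to the test field $W(t) = \frac{\phi(t)}{\phi(r)}C(\gamma'(t))$ with $\phi = \phi_{2\lambda}$ ($\phi'' + 2\lambda\phi = 0$, $\phi(0)=0$, $\phi'(0)=1$): here $C(\gamma'(t))$ is parallel, and $W(0) = 0 \in T_{\gamma(0)}P$, so $W$ is admissible and the second-fundamental-form boundary term in $I_r$ drops out. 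The curvature term is now $R(C(\gamma'),\gamma',\gamma',C(\gamma')) = H(\gamma') \ge 2\lambda$, and the same integration by parts yields $\nabla^2\rho(C(\gamma'),C(\gamma'))(x) \le \phi'(r)/\phi(r) = \cot_{2\lambda}(r)$. Running the identical computation in the complex space form — where $C(\tilde\gamma'(0))$ is again normal to $\tilde P$ and the holomorphic sectional curvature equals $2\lambda$ — identifies the same $W$ as the genuine $\tilde P$-Jacobi field, so $\nabla^2\tilde\rho(\tilde Z,\overline{\tilde Z}) = \tfrac12\cot_{2\lambda}(\tilde\rho)$, which is the claimed inequality. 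For $\lambda=0$ and $\tilde P$ a point, $\cot_{2\lambda}(r) = 1/r$; since $\nabla^2\log\rho = \tfrac1\rho\nabla^2\rho - \tfrac1{\rho^2}\,d\rho\otimes d\rho$ and $(d\rho\otimes d\rho)(Z,\overline Z) = |\langle\nabla\rho,Z\rangle|^2 = \tfrac12$, the bound $\nabla^2\rho(Z,\overline Z) \le \tfrac1{2\rho}$ is equivalent to $\nabla^2\log\rho(Z,\overline Z) \le 0$.

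The substance of the argument is bookkeeping, not hard estimates. The point in (i) is to recognize that $\Delta^{\perp}\rho$ collects exactly the $2m-2$ real directions orthogonal to the holomorphic plane $\{\gamma',C(\gamma')\}$, so that the relevant sum of radial curvatures is precisely $\Ric^{\perp}(\gamma',\gamma')$ and the assumed bound is the natural one; the point in (ii) is that for a complex $P$ the vector $C(\gamma'(0))$ is automatically normal to $P$, which is what annihilates the second-fundamental-form term and lets the argument go through without any hypothesis on $P$. What legitimizes all of this is the K\"ahler condition: parallelism of $C$ is what keeps the chosen test fields inside the required subspaces along the whole of $\gamma$; the argument fails for non-K\"ahler almost Hermitian metrics. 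I expect the only genuine care to lie in the standard caveats — staying below the first zero of the comparison function when $\lambda>0$, and confirming that being outside the cut locus (resp.\ focal locus) rules out (focal) conjugate points on $(0,r]$ so that the index lemma is indeed applicable.
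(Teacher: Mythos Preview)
Your proposal is correct and follows essentially the same route as the paper's proof. The paper likewise uses the index-form comparison along a minimal geodesic, parallel transports a frame of $\{\gamma',C(\gamma')\}^{\perp}$ (using that $C$ is parallel), builds test fields $\overline{J}_i(\eta)=\frac{S_{\lambda/2}(\eta)}{S_{\lambda/2}(\ell)}\,e_i(\eta)$ for (i) and $\overline{J}_{n-1}(\eta)=\frac{S_{2\lambda}(\eta)}{S_{2\lambda}(\ell)}\,C(\gamma')(\eta)$ for (ii), and applies the index lemma; the only cosmetic difference is that the paper phrases the bound by comparing directly with the Jacobi fields of the model space (via a lemma computing $R_{\tilde e_n,\tilde e_k}\tilde e_n$ there), while you do the equivalent integration by parts with $\phi''+\kappa\phi=0$ to arrive at $\cot_\kappa$.
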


\begin{remark} The part (ii) was proved by G. Liu in \cite{Liu}  for the case that $P$ and $\tilde{P}$ are two points. The proof of \cite{Liu} follows the argument in \cite{LW}.
The results provide a generalization of the comparison theorem proved in \cite{LW}. Besides the point that the proof here uses a different argument, more importantly, the results signify the the geometric implications of orthogonal Ricci curvature and holomorphic sectional curvature.
\end{remark}

If both assumptions in (i) and (ii) are satisfied, the estimates in Theorem \ref{thm-com1} implies the volume comparison as in \cite{LW}.
\begin{corollary}\label{coro-wang} Assume that $(M^m, g)$ satisfies that
$Ric^{\perp}\ge (m-1) \lambda $ and $H\ge 2\lambda $. Then for any points $x\in M$, $\tilde{x}\in \tilde{M}$, $\Delta \rho (x) \le \left.\Delta \tilde{\rho}\right|_{\rho(x)}$,  and for any  $0<r\le R$,
$$
\frac{Vol(B(x, R))}{Vol(B(x, r))}\le \frac{Vol(\tilde{B}(\tilde{x}, R))}{Vol(\tilde{B}(\tilde{x}, r))} $$
where $\tilde{B}(\tilde{x}, R)$ is the ball in the complex space form. Equality holds if and only if $B(x, R)$ is holomorphic-isometric to the ball in the complex space form.
\end{corollary}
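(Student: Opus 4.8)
The plan is to reduce the volume statement to a pointwise Laplacian comparison and then run the classical Bishop--Gromov argument. For the pointwise bound, I would apply Theorem \ref{thm-com1}(ii) with the complex submanifold $P$ taken to be the single point $p$ (and $\tilde P=\{\tilde p\}$), whose focal locus is precisely the conjugate locus of $p$; this gives $\nabla^2\rho(Z,\overline Z)|_x\le \nabla^2\tilde\rho(\tilde Z,\overline{\tilde Z})|_{\tilde\rho=\rho(x)}$ for $x$ outside the cut locus of $p$. Combining this with the orthogonal Laplacian bound of Theorem \ref{thm-com1}(i) and the splitting $\Delta\rho=\Delta^{\perp}\rho+\nabla^2\rho(Z,\overline Z)$ on $M$, together with the corresponding splitting $\Delta\tilde\rho=\Delta^{\perp}\tilde\rho+\nabla^2\tilde\rho(\tilde Z,\overline{\tilde Z})$ on the complex space form, summing the two inequalities yields $\Delta\rho(x)\le\Delta\tilde\rho|_{\tilde\rho=\rho(x)}$ at every $x$ outside the cut locus of $p$. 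This already proves the first assertion of the corollary.

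Next I would pass to geodesic polar coordinates about $p$. Writing the Riemannian volume form on the segment domain as $\mathcal A(r,\theta)\,dr\,d\theta$, one has the standard identity $\Delta\rho(\exp_p(r\theta))=\partial_r\log\mathcal A(r,\theta)$, and likewise $\Delta\tilde\rho=\partial_r\log\widetilde{\mathcal A}(r)$ for the rotationally symmetric density $\widetilde{\mathcal A}$ of the space form. The inequality above becomes $\partial_r\log\bigl(\mathcal A(r,\theta)/\widetilde{\mathcal A}(r)\bigr)\le 0$ on the interval $(0,c(\theta))$, where $c(\theta)$ is the cut distance in the direction $\theta$; since both densities are asymptotic to $r^{n-1}$ as $r\to 0$, the ratio $\mathcal A(r,\theta)/\widetilde{\mathcal A}(r)$ is non-increasing and bounded by $1$. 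Extending $\mathcal A(\cdot,\theta)$ by zero past the cut locus — which can only decrease $Vol(B(x,R))$ — and invoking the elementary fact that if $f/g$ is non-increasing then $R\mapsto\int_0^R f\big/\int_0^R g$ is non-increasing, I conclude that $R\mapsto Vol(B(x,R))/Vol(\tilde B(\tilde x,R))$ is non-increasing, which is exactly the claimed inequality $Vol(B(x,R))/Vol(B(x,r))\le Vol(\tilde B(\tilde x,R))/Vol(\tilde B(\tilde x,r))$.

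The delicate point is the equality case. If the volume ratio is an equality for some $0<r<R$, then by monotonicity $R'\mapsto Vol(B(x,R'))/Vol(\tilde B(\tilde x,R'))$ is constant on $[r,R]$, which forces $\mathcal A(\cdot,\theta)/\widetilde{\mathcal A}(\cdot)$ to be constant and $c(\theta)\ge R$ for all relevant $\theta$, and hence $\Delta\rho=\Delta\tilde\rho$ identically on $B(x,R)$; since each of the two summand inequalities from Theorem \ref{thm-com1}(i),(ii) points the same way, equality must hold in both the orthogonal Laplacian comparison and the holomorphic Hessian comparison throughout $B(x,R)$. Tracing the equality discussion in the proofs of those two parts — where the comparisons arise from comparing index forms / Jacobi fields along radial geodesics from $p$ — equality forces the radial Jacobi fields along every minimizing geodesic $\gamma$ from $p$ to coincide with those of the space form, so that $R(\dot\gamma,\cdot,\cdot,\dot\gamma)$ and $R(\dot\gamma,C\dot\gamma,C\dot\gamma,\dot\gamma)$ match the space-form values along all such $\gamma$; as these geodesics sweep out $B(x,R)$ and every unit direction occurs as some $\dot\gamma$, the holomorphic sectional curvature is pinned to the constant $2\lambda$ on $B(x,R)$, whence (a K\"ahler metric of constant holomorphic sectional curvature being locally the complex space form) $B(x,R)$ is holomorphically isometric to the corresponding ball; the converse is immediate. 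The main obstacle I anticipate is making this rigidity step airtight: extracting \emph{full} curvature rigidity (constant holomorphic sectional curvature, not merely radial data) from equality in the two scalar comparisons, and verifying that the Jacobi-field rigidity propagates across all of $B(x,R)$ rather than only along a single ray.
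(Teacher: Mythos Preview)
Your proposal is correct and follows essentially the same route as the paper: the paper does not give a separate proof of this corollary, but simply remarks that ``If both assumptions in (i) and (ii) are satisfied, the estimates in Theorem~\ref{thm-com1} imply the volume comparison as in \cite{LW}''---i.e., add the two inequalities from Theorem~\ref{thm-com1}(i),(ii) via the splitting $\Delta\rho=\Delta^{\perp}\rho+\nabla^2\rho(Z,\overline Z)$ to get the Laplacian comparison, then run the standard Bishop--Gromov argument. Your write-up supplies exactly these details (and more for the equality case, which the paper does not spell out); your own flagged concern about extracting full curvature rigidity from the equality in the index-form comparisons is a fair caveat, and the paper likewise defers this to the ``classical case''.
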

Note that the lower bounds of the  orthogonal Ricci and holomorphic sectional curvature implies the Ricci lower bound $Ric(X, X)\ge (m+1) |X|^2$. But the comparison in the K\"ahler case is sharper than the Riemannian setting.

The first part of Theorem \ref{thm-com1} can be generalized to the cases of complex hypersurfaces, which can be viewed as the K\"ahler version of Heintze-Karcher theorem \cite{HK} with the assumption on the Ricci curvature being replaced by the orthogonal Ricci.

\begin{theorem}\label{thm-com11} Let $(M^m, g)$ be a K\"ahler manifold with $Ric^{\perp}\ge (m-1) \lambda $. Let $(\tilde{M}, \tilde{g})$ be the complex space form with constant holomoprhic sectional curvature $2\lambda$. Let $\rho(x)$ be the distance function to a complex hypersurface $P$ (and $\tilde{\rho}$ be the corresponding distance function to a totally geodesic complex hypersurface $\tilde{P}$). Then for point $x$, which is not in the focal locus of $P$,
$$
\Delta^{\perp} \rho (x) \le \Delta^{\perp}\tilde{\rho}\left.\right|_{\tilde{\rho}=\rho(x)}=(m-1)\tan_{\frac{\lambda}{2}}(\rho).
$$
\end{theorem}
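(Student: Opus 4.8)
The plan is to run a Heintze--Karcher type index form argument, but placing the test Jacobi fields only in the directions orthogonal to the complex line $\operatorname{span}_{\mathbb R}\{\nabla\rho,C(\nabla\rho)\}$, so that the curvature term produced is $Ric^{\perp}$ rather than $Ric$, and using minimality of the complex hypersurface $P$ to annihilate the footpoint boundary term. Fix $x$ outside the focal locus of $P$ and let $\gamma\colon[0,\rho]\to M$, $\rho=\rho(x)$, be a unit-speed minimal geodesic from $P$ to $x=\gamma(\rho)$; it meets $P$ orthogonally, so $T(0):=\gamma'(0)$ is a unit normal of $P$. Put $T=\gamma'$ and $V(t)=\{T(t),C(T(t))\}^{\perp}\subset T_{\gamma(t)}M$, a real $(2m-2)$-dimensional distribution parallel along $\gamma$ (since $T$ and $C$ are). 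As $P$ is a \emph{complex} hypersurface its normal space at $\gamma(0)$ is the complex line $\operatorname{span}_{\mathbb R}\{T(0),C(T(0))\}$, hence the parallel transport of $V(\rho)$ back to $\gamma(0)$ is exactly $T_{\gamma(0)}P$. From $\nabla^2\rho(E_i,\overline E_i)=\tfrac12\big(\nabla^2\rho(e_i,e_i)+\nabla^2\rho(C(e_i),C(e_i))\big)$, applied to the unitary frame with $E_1$ dual to $\nabla\rho=T(\rho)$, one gets
\[
\Delta^{\perp}\rho(x)=\sum_{i=2}^{m}\nabla^2\rho(E_i,\overline E_i)=\tfrac12\sum_{j=1}^{2m-2}\nabla^2\rho(v_j,v_j)
\]
for any orthonormal basis $\{v_j\}$ of $V(\rho)$.

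Next I use the standard comparison of the Hessian of the distance to a submanifold with index forms (the tool underlying \cite{HK}): for $x$ non-focal and any piecewise-smooth $W$ along $\gamma$ with $W(\rho)=v\perp T(\rho)$ and $W(0)\in T_{\gamma(0)}P$,
\[
\nabla^2\rho(v,v)\big|_x\ \le\ \int_0^{\rho}\!\Big(|W'|^2-\langle R(W,T)T,W\rangle\Big)\,dt\ -\ \langle S^P_{T(0)}W(0),W(0)\rangle,
\]
with equality for the $P$-Jacobi field ($S^P$ the shape operator of $P$). Let $c$ solve $c''+\tfrac{\lambda}{2}c=0$, $c(0)=1$, $c'(0)=0$ (so that $c'/c=\tan_{\frac{\lambda}{2}}$, and $(m-1)c'(\rho)/c(\rho)=\Delta^{\perp}\tilde\rho|_{\tilde\rho=\rho}$ for the totally geodesic $\tilde P$), put $\phi=c/c(\rho)$, and let $\tilde v_j(t)$ be the parallel fields with $\tilde v_j(\rho)=v_j$, so $\{\tilde v_j(0)\}$ is an orthonormal basis of $T_{\gamma(0)}P$. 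Taking $W_j=\phi\,\tilde v_j$ and summing over $j$: the curvature terms sum to $\phi(t)^2\operatorname{tr}_{V(t)}R(\cdot,T)T=\phi(t)^2\,Ric^{\perp}(T(t),T(t))$, which is precisely the definition of $Ric^{\perp}$ recalled above and is therefore $\ge(m-1)\lambda\,\phi(t)^2$; and the boundary terms sum to $\operatorname{tr}(S^P_{T(0)}|_{T_{\gamma(0)}P})$, which is zero since a complex submanifold is minimal. Integrating by parts with $\phi(\rho)=1$, $\phi'(0)=0$, $\phi''=-\tfrac{\lambda}{2}\phi$ then gives
\[
\sum_j\nabla^2\rho(v_j,v_j)\big|_x\ \le\ \int_0^{\rho}\!\Big((2m-2)(\phi')^2-(m-1)\lambda\,\phi^2\Big)dt\ =\ 2(m-1)\!\int_0^{\rho}\!\Big((\phi')^2-\tfrac{\lambda}{2}\phi^2\Big)dt\ =\ 2(m-1)\,\frac{c'(\rho)}{c(\rho)},
\]
and halving yields $\Delta^{\perp}\rho(x)\le(m-1)\tan_{\frac{\lambda}{2}}(\rho)=\Delta^{\perp}\tilde\rho|_{\tilde\rho=\rho(x)}$.

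The one delicate point is legitimacy of the test field, i.e.\ that $c>0$ on $(0,\rho]$ (automatic when $\lambda\le 0$). If $c(\ell)=0$ for some $\ell\le\rho$, then on $[0,\ell]$ the fields $c(t)\tilde v_j(t)$ are admissible and vanish at $\ell$, and the same computation gives $\sum_j I^{\ell}_0(c\tilde v_j,c\tilde v_j)\le 2(m-1)\int_0^{\ell}((c')^2-\tfrac{\lambda}{2}c^2)\,dt=0$; since the index form on fields vanishing at $\ell$ is positive definite when $\gamma|_{(0,\ell]}$ has no focal point of $P$, this forces a focal point of $P$ in $(0,\ell]\subseteq(0,\rho]$, impossible as $\gamma|_{[0,\rho]}$ is minimizing with $x$ non-focal; hence $c>0$ on $(0,\rho]$ and the argument closes. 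I expect the main obstacle to be exactly this bookkeeping: handling the submanifold index form with its boundary term, verifying that the parallel transport of $V$ lands on $T_{\gamma(0)}P$ (this, together with its minimality, is where ``$P$ a complex hypersurface'' is genuinely used; a real hypersurface would not work), and checking that the trace over $V$ of the directional curvature operator is $Ric^{\perp}$ and not $Ric$. A Riccati comparison for $\operatorname{tr}_{V}\nabla^2\rho$, using Cauchy--Schwarz to bound $\operatorname{tr}_V(\nabla^2\rho)^2$ from below by $(\operatorname{tr}_V\nabla^2\rho)^2/(2m-2)$, gives an alternative route with the same initial-value subtlety.
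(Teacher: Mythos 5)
Your proposal is correct and follows essentially the same route as the paper: test fields $\phi\,\tilde v_j$ with $\phi$ a normalized generalized cosine ($\phi''+\tfrac{\lambda}{2}\phi=0$, Neumann data at $P$) multiplying parallel fields spanning $\{\gamma',C\gamma'\}^{\perp}$, the index comparison lemma, minimality of the complex hypersurface to kill the boundary term, and the observation that the trace of $\langle R(\cdot,\gamma')\gamma',\cdot\rangle$ over those $2m-2$ directions is exactly $Ric^{\perp}(\gamma',\gamma')$. Your explicit check that $c>0$ on $(0,\rho]$ (via positivity of the index form before the first focal point) is a point the paper leaves implicit, but it is a refinement rather than a different argument.
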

Note that $\tan_{\frac{\lambda}{2}}(t)$ is a little different from the conventional trigonometric function. In fact for $\lambda>0$,
$\tan_{\frac{\lambda}{2}}(t)=-\sqrt{\frac{\lambda}{2}}\cdot \frac{\sin(\sqrt{\frac{\lambda}{2}}t)}{\cos (\sqrt{\frac{\lambda}{2}}t)}$.
The above result strengthens that the sensible notion $Ric^{\perp}$ is related to the orthogonal Laplacian $\Delta^{\perp}$. If one assumes additionally the bound on the holomorphic sectional curvature, then one has the level hypersurface area comparison result similar to that of \cite{HK}, but sharper than the Riemannian setting due to K\"ahlerity.

Similarly one can consider the orthogonal Hessian of a real function $u$ to be $\nabla^2 u(Z, \overline{Z})$ restricted to the space consisting of all
$Z\perp \{ \nabla u, C(\nabla u)\}$. By now it is natural to infer that the orthogonal bisectional curvature gives comparison theorem for the orthogonal Hessian.

\begin{theorem}\label{thm-com2} Let $(M^m, g)$ be a K\"ahler manifold with $R(Z, \overline{Z}, W, \overline{W})\ge \lambda |Z|^2|W|^2$ for any $Z\perp W$ (namely the orthogonal bisectional curvature is bounded from the below by $\lambda$, which we abbreviate as $B^{\perp}\ge \lambda$). Let $(\tilde{M}, \tilde{g})$ be the complex space form with constant holomoprhic sectional curvature $2\lambda$. Let $\rho(x)$ be the distance function to a point $p$ (and $\tilde{\rho}$ be the corresponding distance function to a point $\tilde{p}$). Then for point $x$, which is not in the cut locus of $p$, restricted to the spaces of vectors $Z$ which are perpendicular to $\{ \nabla \rho, C(\nabla \rho)\}$ (as well as to $\{ \nabla \tilde{\rho}, C(\nabla \tilde{\rho})\}$)
$$
\nabla^2 \rho (x) \le \nabla^2\tilde{\rho}\left.\right|_{\tilde{\rho}=\rho(x)}.
$$
\end{theorem}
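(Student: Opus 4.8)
The plan is to prove Theorem~\ref{thm-com2} by the classical second variation / index form comparison, the same mechanism behind the Heintze--Karcher and Rauch theorems, but with the curvature term of the index form repackaged via the first Bianchi identity so that exactly the hypothesis $B^{\perp}\ge\lambda$ is consumed. Fix $x$ not in the cut locus of $p$, let $\gamma\colon[0,\rho]\to M$ ($\rho=\rho(x)$) be the unit speed minimizing geodesic from $p$ to $x$, so that $\gamma'=\nabla\rho$ along $\gamma$. A $(1,0)$ vector $Z$ at $x$ with $Z\perp\{\nabla\rho,C(\nabla\rho)\}$ is uniquely $Z=\tfrac{1}{\sqrt2}(V-\sqrt{-1}\,C(V))$ with $V$ real, $|V|=|Z|$, and then $V$ and $C(V)$ are each orthogonal to $\gamma'$ and to $C(\gamma')$, while $\nabla^2\rho(Z,\overline Z)=\tfrac12\big(\nabla^2\rho(V,V)+\nabla^2\rho(C(V),C(V))\big)$. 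In the complex space form $\tilde M$ the distance function restricted to $\{\nabla\tilde\rho,C(\nabla\tilde\rho)\}^{\perp}$ has Hessian $|\tilde Z|^2\cot_{\frac{\lambda}{2}}(\tilde\rho)$ (the totally real sectional curvatures of $\tilde M$ equal $\tfrac{\lambda}{2}$), so it suffices to show, after normalizing $|V|=1$,
$$\nabla^2\rho(Z,\overline Z)\ \le\ \cot_{\tfrac{\lambda}{2}}(\rho).$$
Here I use the standard identity $\nabla^2\rho(V,V)=I_\gamma(J_V,J_V)$, where $J_V$ is the Jacobi field along $\gamma$ with $J_V(0)=0$, $J_V(\rho)=V$, and $I_\gamma(W,W)=\int_0^\rho\big(|\nabla_{\gamma'}W|^2-R(W,\gamma',\gamma',W)\big)\,dt$; likewise for $C(V)$.

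Let $\phi$ solve $\phi''+\tfrac{\lambda}{2}\phi=0$ with $\phi(0)=0$, $\phi(\rho)=1$, so $\phi'(\rho)=\cot_{\tfrac{\lambda}{2}}(\rho)$. A preliminary Bonnet--Myers type step guarantees $\phi>0$ on $(0,\rho]$: when $\lambda>0$, inserting the fields $\sin(\pi t/\rho)P$ and $\sin(\pi t/\rho)\,C(P)$ (with $P$ a parallel unit field along $\gamma$ orthogonal to $\{\gamma',C(\gamma')\}$) into the second variation of $\gamma$ and using $B^{\perp}\ge\lambda$ forces one of the two index values to be negative once $\rho\ge\pi\sqrt{2/\lambda}$, contradicting minimality of $\gamma$ (no conjugate points as $x$ is not in the cut locus), hence $\rho<\pi\sqrt{2/\lambda}$. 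Now let $P$ be the parallel field along $\gamma$ with $P(\rho)=V$; since $M$ is K\"ahler, $C(P)$ is also parallel and parallel transport preserves orthogonality to $\gamma'$ and $C(\gamma')$, so $P,C(P)\perp\{\gamma',C(\gamma')\}$ along all of $\gamma$, with $C(P)(\rho)=C(V)$. Set $W_1=\phi P$, $W_2=\phi\,C(P)$; by the index lemma, $I_\gamma(J_V,J_V)\le I_\gamma(W_1,W_1)$ and $I_\gamma(J_{C(V)},J_{C(V)})\le I_\gamma(W_2,W_2)$, and
$$I_\gamma(W_1,W_1)+I_\gamma(W_2,W_2)=\int_0^\rho\Big(2\phi'^2-\phi^2\big(R(P,\gamma',\gamma',P)+R(C(P),\gamma',\gamma',C(P))\big)\Big)\,dt .$$
By $J$-invariance of the K\"ahler curvature, $R(C(P),\gamma',\gamma',C(P))=R(P,C(\gamma'),C(\gamma'),P)$, and the Bianchi expansion recalled in the Introduction gives, with $Z_t=\tfrac{1}{\sqrt2}(P-\sqrt{-1}\,C(P))$ and $W_t=\tfrac{1}{\sqrt2}(\gamma'-\sqrt{-1}\,C(\gamma'))$,
$$R(P,\gamma',\gamma',P)+R(C(P),\gamma',\gamma',C(P))=R(Z_t,\overline{Z_t},W_t,\overline{W_t}).$$
Since $|Z_t|=|W_t|=1$ and $\langle Z_t,\overline{W_t}\rangle=0$ (because $P\perp\gamma'$ and $P\perp C(\gamma')$), the hypothesis $B^{\perp}\ge\lambda$ yields $R(Z_t,\overline{Z_t},W_t,\overline{W_t})\ge\lambda$.

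Using $\phi^2\ge0$ and then integrating $\phi'^2$ by parts with $\phi''=-\tfrac{\lambda}{2}\phi$, $\phi(0)=0$, $\phi(\rho)=1$,
$$I_\gamma(W_1,W_1)+I_\gamma(W_2,W_2)\ \le\ \int_0^\rho\big(2\phi'^2-\lambda\phi^2\big)\,dt\ =\ 2\phi'(\rho)\ =\ 2\cot_{\tfrac{\lambda}{2}}(\rho).$$
Therefore $\nabla^2\rho(Z,\overline Z)=\tfrac12\big(I_\gamma(J_V,J_V)+I_\gamma(J_{C(V)},J_{C(V)})\big)\le\cot_{\tfrac{\lambda}{2}}(\rho)$, which by homogeneity is the stated bound for general $Z\perp\{\nabla\rho,C(\nabla\rho)\}$; comparing with the value $|\tilde Z|^2\cot_{\tfrac{\lambda}{2}}(\tilde\rho)$ of $\nabla^2\tilde\rho$ on $\{\nabla\tilde\rho,C(\nabla\tilde\rho)\}^{\perp}$ at $\tilde\rho=\rho(x)$ completes the proof.

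The computations are routine once the right test fields are chosen; the crux is the packaging step, namely recognizing that pairing a parallel field $P$ and its image $C(P)$ against $\gamma'$, after invoking $J$-invariance and the first Bianchi identity, reassembles precisely the orthogonal bisectional curvature $R(Z_t,\overline{Z_t},W_t,\overline{W_t})$ of a \emph{Hermitian-orthogonal} pair, so that the one-sided hypothesis $B^{\perp}\ge\lambda$ — and nothing about the holomorphic sectional curvature or the Ricci curvature — is what is needed. The remaining points are the usual technical ones: the Bonnet--Myers type estimate ensuring $\phi>0$ on $(0,\rho)$ when $\lambda>0$, and, should one want the equality discussion, tracking the rigidity case of the index lemma.
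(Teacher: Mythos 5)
Your proof is correct and follows essentially the same route as the paper: the paper proves this theorem by running the index-form comparison of Theorem \ref{thm-com1} ``verbatim'' on the single pair $\{V, C(V)\}$, using the transplanted test fields $\frac{S_{\lambda/2}(\eta)}{S_{\lambda/2}(\ell)}e_i$ and the index lemma, and your repackaging of $R(P,\gamma',\gamma',P)+R(C(P),\gamma',\gamma',C(P))$ as the orthogonal bisectional curvature of the Hermitian-orthogonal pair $(Z_t,W_t)$ via $J$-invariance and the first Bianchi identity is exactly the intended use of the hypothesis $B^{\perp}\ge\lambda$. Your explicit Bonnet--Myers check that $\rho<\pi\sqrt{2/\lambda}$ when $\lambda>0$, so that $\phi>0$ on $(0,\rho]$, is a technical point the paper leaves implicit but is a welcome addition.
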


A similar argument as in the classical Bonnet-Myers theorem  implies that any complete K\"ahler manifold whose $Ric^{\perp}$ is bounded from below by a positive constant must be compact. This implies that any compact K\"ahler manifold with positive orthogonal Ricci curvature must have finite fundamental group.

For compact K\"ahler manifolds, in the following we will focus on the relation between  the holomorphic sectional curvature $H$, the Ricci curvature $Ric$, and the orthogonal Ricci curvature $Ric^{\perp }$. In terms of their strength, all three notions of curvature are sitting between bisectional curvature and scalar curvature, in the sense that when the bisectional curvature is positive, all three are positive, while when any one of them is positive, the scalar curvature is positive.

However, the relationship between these three curvature conditions is quite subtle, except the fact that $Ric= H + Ric^{\perp}$. By Yau's solution to the Calabi conjecture \cite{Yau}, compact K\"ahler manifolds with positive Ricci are exactly the projective manifolds with positive first Chern class, namely the Fano manifolds.

For compact K\"ahler manifolds with positive $H$, it was conjectured by Yau (cf. Problem 47,  \cite{YauP}), and recently proved by X. Yang \cite{XYang} that such manifolds are all projective. Hence by the recent work of Heier and Wong \cite{HeierWong} (see also \cite{XYang} for an alternative proof) they are all rationally-connected, meaning that any two points on the manifold can be joined by a rational curve. On the other hand, it was conjectured by Yau also that any rational or unirational manifold admits K\"ahler metrics with positive $H$. But this is far from being settled, as even on the surface ${\mathbb P}^2\# 2\overline{{\mathbb P}^2}$,  the blowing up of  ${\mathbb P}^2$ at two points, it is still an open question whether there exists such a metric.

It is certainly a natural question to understand the class of compact K\"ahler manifolds with positive $Ric^{\perp}$. We propose the following:

\begin{conjecture} Let $M^m$ ($m\geq 2$) be a compact K\"ahler manifold with $Ric^{\perp} >0$ everywhere. Then for any $1\leq p\leq m$, there is no non-trivial global holomorphic $p$-form, namely, the Hodge number $h^{p,0}=0$. In particular, $M^m$ is projective and simply-connected.
\end{conjecture}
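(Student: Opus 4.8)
The plan is to run a Bochner-type argument on holomorphic $p$-forms, exactly parallel to the classical Bochner vanishing for $h^{1,0}$ under positive Ricci, but with the orthogonal Ricci curvature $\Ric^\perp$ playing the role of $\Ric$. First I would fix a nonzero holomorphic $p$-form $\alpha$ on the compact K\"ahler manifold $M^m$ and apply the Bochner--Kodaira--Weitzenb\"ock formula to $\alpha$. Since $\alpha$ is holomorphic (hence harmonic, $M$ being compact K\"ahler), integration by parts kills the Laplacian term and one is left with the statement that the curvature term $\langle \Rc(\alpha),\alpha\rangle$ integrates to zero. In a local unitary frame $\{E_i\}$ the curvature operator acting on $(p,0)$-forms is built from the Ricci tensor contracted into the $p$ indices of $\alpha$ together with a sum of terms $R_{i\bar j k\bar l}$ coupling pairs of indices; the essential point is to rewrite this curvature term so that the positivity of $\Ric^\perp$ forces $\alpha\equiv 0$. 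The cleanest way is to diagonalize: at a given point choose the unitary frame so that the "worst" index structure of $\alpha$ is adapted, and then group the curvature contributions into a sum of $\Ric^\perp(E_i,\overline{E}_i)$-type blocks plus manifestly nonnegative remainders (using that for $p\le m-1$ each monomial $dz^{i_1}\wedge\cdots\wedge dz^{i_p}$ omits at least one index, so one always has "room" to extract an orthogonal Ricci term rather than a full Ricci term).

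Concretely, for a decomposable form $\alpha = f\, dz^1\wedge\cdots\wedge dz^p$ in an adapted frame, the curvature term expands as $\sum_{a=1}^p \Ric_{a\bar a}|f|^2 - \sum_{a\ne b, \, a,b\le p} R_{a\bar a b\bar b}|f|^2$ plus cross terms; the diagonal part $\sum_a \Ric_{a\bar a} - \sum_{a\ne b\le p} R_{a\bar a b\bar b}$ can be regrouped, using $\Ric_{a\bar a} = \sum_{j} R_{a\bar a j\bar j}$, as $\sum_{a\le p}\sum_{j\ge p+1} R_{a\bar a j\bar j} + \sum_{a\le p} R_{a\bar a a\bar a}$. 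For each fixed $a\le p$, since $p\le m-1$ there is at least one index $j_0\ge p+1$, and $\sum_{j\ne a} R_{a\bar a j\bar j} = \Ric^\perp(E_a,\overline{E}_a) > 0$; after carefully accounting for the coefficients coming from the $p$-form inner product, the whole diagonal contribution is bounded below by a positive multiple of $\sum_{a\le p}\Ric^\perp(E_a,\overline{E}_a)\,|f|^2 > 0$. The general (non-decomposable) case follows by writing $\alpha$ in the monomial basis at each point and observing the curvature quadratic form is diagonal in this basis up to terms controlled the same way — this is the standard reduction in Bochner arguments and I would not belabor it. Integrating the strict inequality over compact $M$ against $|\alpha|^2>0$ somewhere yields a contradiction, so $h^{p,0}=0$ for $1\le p\le m$.

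Once all $h^{p,0}=0$ are established, projectivity follows from the Kodaira embedding criterion in the form already invoked in the introduction: a compact K\"ahler manifold with $h^{2,0}=0$ has $H^2(M,\mathcal O_M)=0$, so the Hodge decomposition forces $H^2(M,\RR)$ to be of type $(1,1)$, and every K\"ahler class can be perturbed to a rational class (Kodaira), hence $M$ is projective; this is exactly the mechanism used by X.~Yang \cite{XYang} for positive $H$. For simple-connectedness: the Bonnet--Myers argument quoted in the excerpt already gives finite $\pi_1(M)$; passing to the universal cover $\widetilde M$ (still compact K\"ahler with $\Ric^\perp>0$), the vanishing $h^{1,0}(\widetilde M)=0$ and more refined topological input — Hodge numbers of the cover versus the base, together with the fact that a nontrivial finite \'etale cover would produce either extra holomorphic forms or violate an Euler-characteristic / $\chi(\mathcal O)$ count — forces $\pi_1$ trivial. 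I expect the main obstacle to be precisely this last point: controlling $\pi_1$ rather than just its finiteness is genuinely harder, which is presumably why the published theorem only proves simple-connectedness for $m\le 4$ (where one can combine the vanishing of $h^{p,0}$ for all $p$ with low-dimensional classification / holomorphic Euler characteristic arguments $\chi(\mathcal O_M)=\sum(-1)^p h^{p,0}=1$ to rule out nontrivial \'etale covers), while the full conjecture for all $m$ remains open. The curvature-term regrouping in the Bochner formula — making sure the $p$-form combinatorics really do leave enough slack to extract $\Ric^\perp$ rather than $\Ric$ — is the second place where care is needed, but it is a finite linear-algebra check rather than a conceptual difficulty.
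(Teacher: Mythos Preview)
This statement is a \emph{conjecture} in the paper, and the authors explicitly do not prove it in full; they establish only the partial result Theorem~\ref{thm-1connect} ($h^{2,0}=h^{m-1,0}=0$, simple-connectedness for $m\le 4$). Your proposal contains two genuine gaps that explain why the full statement remains open.

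First, your Bochner curvature term is not correct, and even your decomposable-case estimate does not follow from $\Ric^\perp>0$. On a K\"ahler manifold the $\partial\bar\partial$-Bochner formula for a holomorphic $(p,0)$-form $s=\frac{1}{p!}\sum_{I}a_I\,dz^I$ gives, at a maximum of $|s|^2$ and for every direction $v$, the inequality $\sum_I|a_I|^2\sum_{k\in I}R_{v\bar v k\bar k}\le 0$; for a decomposable $s=f\,\varphi_1\wedge\cdots\wedge\varphi_p$ this reads $\sum_{k\le p}R_{v\bar v k\bar k}\le 0$ for all $v$. There is no subtracted cross term $\sum_{a\ne b\le p}R_{a\bar a b\bar b}$ in the curvature of $\Lambda^p\Omega$. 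Your regrouped expression $\sum_{a\le p}H(E_a)+\sum_{a\le p}\sum_{j>p}R_{a\bar a j\bar j}$ is \emph{not} bounded below by a positive multiple of $\sum_a\Ric^\perp(E_a,\overline{E}_a)$: the second sum is only the ``outward'' part of $\Ric^\perp(E_a,\overline{E}_a)=\sum_{j\ne a}R_{a\bar a j\bar j}$, and the holomorphic sectional curvatures $H(E_a)$ can be negative under $\Ric^\perp>0$ (the paper builds such examples in \S7--8). The paper handles the decomposable case differently, via an averaging argument over $U(2r)$ using Berger's formula to trade $\sum H(E_i)$ for a multiple of the restricted scalar curvature $S|_W$ and derive the contradiction $S|_W>0$ versus $S|_W\le 0$.

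Second---and this is the reason the conjecture is open---you wave away the non-decomposable case as ``the standard reduction''. It is not. For $p=m-1$ every form is automatically decomposable, and for $p=2$ the paper uses the normal form of a skew-symmetric matrix together with the power trick $\sigma=s^r$ to force decomposability at the maximum. For $3\le p\le m-2$ there is no unitary frame in which a generic $p$-form is decomposable and no analogue of the $s^r$ device; with several multi-indices $I$ present in $\sum_I|a_I|^2\sum_{k\in I}R_{v\bar v k\bar k}\le 0$, you cannot choose a single $v$ (or a finite combination) adapted to all of them simultaneously. This is the missing idea, not a bookkeeping detail.

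Finally, you have the logical dependence reversed at the end. In the paper, simple-connectedness is \emph{deduced from} the full vanishing $h^{p,0}=0$ for $1\le p\le m$ (via $\chi(\mathcal O_M)=1$ together with finiteness of $\pi_1$ and Riemann--Roch), not the other way around. Simple-connectedness is claimed only for $m\le 4$ precisely because only $h^{1,0},h^{2,0},h^{m-1,0},h^{m,0}$ are known to vanish, and these four suffice to force $\chi(\mathcal O_M)=1$ only when $m\le 4$.
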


Let us first explain the ``in particular" part in the above conjecture. Note that once we have the vanishing of $h^{p,0}$ for all $1\leq p\leq m$, then the vanishing of $h^{2,0}$ implies that $M^m$ is projective. Also, now since
$$ \chi({\mathcal O}_M) = 1 - h^{1,0} + h^{2,0} - \cdots + (-1)^m h^{m,0} =1,$$
where ${\mathcal O}_M$ is the structure sheaf, we know that such a manifold $M^m$ must be simply-connected since $\pi_1(M)$ is finite, and the Riemann-Roch theorem which asserts that the arithmetic genus $\chi$ is given as the integral over $M$ of a polynomial in Chern classes, as in \cite{Ko}.

We remark that for $M^m$ in the conjecture, $h^{1,0}=0$ since $\pi_1(M)$ is finite, and $h^{m,0}=0$ since $M^m$ has positive scalar curvature, thus the canonical line bundle cannot admit any non-trivial global holomorphic section. In fact, its Kodaira dimension must be $-\infty $ as it has positive total scalar curvature. So the conjecture is really about the cases $2\leq p\leq m-1$.

We also remark that, when $m=2$, the only compact K\"ahler surface with positive $Ric^{\perp}$ is (biholomorphic to) ${\mathbb P}^2$. This is because $Ric^{\perp }$ is equivalent to orthogonal bisectional curvature $B^{\perp}$ when $m=2$. By a result of Gu and Zhang \cite{GuZhang}, any compact, simply-connected K\"ahler manifold $M^m$ with positive $B^{\perp}$ is biholomorphic to ${\mathbb P}^m$ since the K\"ahler-Ricci flow takes any such metric into a metric with positive bisectional curvature (see also an alternate argument by Wilking in \cite{Wilking}). It would certainly be an interesting question to understand the class of threefolds or fourfolds with the $Ric^{\perp } >0$ condition. In this direction we prove the following partial result.

\begin{theorem}\label{thm-1connect} Let $M^m$ ($m\geq 2$) be a compact K\"ahler manifold with $Ric^{\perp} >0$ everywhere. Then its Hodge numbers $h^{m-1,0}=h^{2,0}=0$. In particular, $M^m$ is always projective. Also, it is simply-connected when $m\leq 4$.
\end{theorem}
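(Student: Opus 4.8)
\emph{Proof idea.} The plan is to reduce the theorem to the two vanishing statements $h^{m-1,0}(M)=h^{2,0}(M)=0$ and then to extract projectivity and (for $m\le4$) simple-connectedness as formal consequences. Two of the four ``extreme'' Hodge numbers are already available: $h^{1,0}(M)=\tfrac12 b_1(M)=0$ because $\pi_1(M)$ is finite by the Bonnet--Myers type argument above, and $h^{m,0}(M)=0$ because $Ric^{\perp}>0$ forces the scalar curvature $S>0$ by Lemma \ref{lemma-easy}, so $K_M$ carries no nontrivial holomorphic section. Thus the essential content is (a) $h^{2,0}=0$, (b) $h^{m-1,0}=0$, and (c) $\pi_1(M)=1$ when $m\le4$.

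\emph{The vanishing (the heart).} Fix $p\in\{2,m-1\}$ and let $\sigma$ be a holomorphic $p$-form; on the compact K\"ahler manifold $M$ it is harmonic, hence $\partial\sigma=\bar\partial\sigma=0$, and the Bochner formula for holomorphic forms gives
\[
\int_M\Bigl(|\nabla\sigma|^2+\mathcal Q_p(\sigma)\Bigr)\,dV=0,
\]
where $\mathcal Q_p(\sigma)$ is the pointwise quadratic form obtained by contracting the curvature tensor against $\sigma,\overline\sigma$ in the $p$ antisymmetrized slots. The crux is the pointwise estimate $-\mathcal Q_p(\sigma)\ge c\,|\sigma|^2$ with $c=c(Ric^{\perp})>0$. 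For $p=m-1$ one uses that an $(m-1)$-form is pointwise decomposable: choose a unitary coframe $\theta^1,\dots,\theta^m$ with $\sigma=f\,\theta^1\wedge\cdots\wedge\theta^{m-1}$; combining the curvature term produced by the Bochner identity with the closedness $\partial\sigma=0$ (equivalently, writing $\Omega^{m-1}\cong T'M\otimes K_M$ and using that the corresponding holomorphic section is divergence-free), a first-Bianchi computation should pin $-\mathcal Q_{m-1}(\sigma)/|\sigma|^2$ down as a positive combination of orthogonal-Ricci curvatures $Ric^{\perp}(E_i,\overline E_i)$ along the form-directions $E_1,\dots,E_{m-1}$, so that $-\mathcal Q_{m-1}(\sigma)\ge(m-1)\bigl(\min_{Z\in\mathbb S^{2m-1}}Ric^{\perp}(Z,\overline Z)\bigr)|\sigma|^2>0$. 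The case $p=2$ is parallel: for $m=3$ it is literally the previous case, and for $m\ge4$ one puts $\sigma$ in its unitary (Youla) normal form $\sum_k f_k\,\theta^{2k-1}\wedge\theta^{2k}$, or uses $\Omega^2\cong\Lambda^{m-2}T'M\otimes K_M$, and runs the same estimate. In all cases the Bochner identity then forces $\sigma\equiv0$, so $h^{m-1,0}(M)=h^{2,0}(M)=0$. This step --- identifying $\mathcal Q_p$ and bounding it below by $Ric^{\perp}$, using the closedness of $\sigma$ in an essential way rather than merely its holomorphicity --- is the main obstacle; it is also precisely where $p\in\{2,m-1\}$ is used, since for intermediate $p$ a $p$-form admits no such reduction to single unitary directions, which is why the full conjecture $h^{p,0}=0$ and simple-connectedness for $m\ge5$ remain open.

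\emph{Projectivity and simple-connectedness.} With $h^{2,0}=h^{1,0}=0$, Hodge theory gives $H^2(M,\mathbb C)=H^{1,1}(M)$, so every real de Rham $2$-class is of type $(1,1)$; a rational class sufficiently close to the K\"ahler class is then representable by a K\"ahler form, and Kodaira's embedding theorem makes $M$ projective. For $m\le4$ the set $\{1,2,m-1,m\}$ exhausts $\{1,\dots,m\}$, so by the above $h^{p,0}(M)=0$ for all $1\le p\le m$, whence $\chi(\mathcal O_M)=1-h^{1,0}+h^{2,0}-\cdots+(-1)^m h^{m,0}=1$. Since $\pi_1(M)$ is finite, the universal cover $\widetilde M$ is compact K\"ahler with $Ric^{\perp}>0$ and $\dim_{\mathbb C}\widetilde M=m\le4$, so the same computation gives $\chi(\mathcal O_{\widetilde M})=1$; but $\chi(\mathcal O_{\widetilde M})=\deg(\widetilde M\to M)\cdot\chi(\mathcal O_M)$ by multiplicativity of the arithmetic genus (Riemann--Roch) under the finite unramified cover, so $\deg(\widetilde M\to M)=1$ and $M$ is simply-connected.
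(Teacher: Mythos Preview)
Your outline of projectivity and of simple-connectedness for $m\le 4$ is fine and matches the paper. The gap is in the vanishing step: you assert that the Weitzenb\"ock curvature term $\mathcal Q_p(\sigma)$ can be bounded pointwise by a positive multiple of $\min Ric^{\perp}$, but you never carry this out, and in fact the integrated Bochner identity you write down does not see $Ric^{\perp}$. Tracing formula~(\ref{eq:40}) over a unitary frame in $v$ gives, for $s=\frac{1}{p!}\sum a_{I_p}\,dz^{I_p}$, the curvature term $\frac{1}{p!}\sum_{I_p}\sum_{k}Ric_{i_k\overline{i_k}}|a_{I_p}|^2$, i.e.\ a sum of ordinary Ricci curvatures along the indices of the form. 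Since $Ric=Ric^{\perp}+H$ and $H$ is not sign-controlled under your hypothesis, there is no way to convert this into a lower bound by $Ric^{\perp}$; the appeals to $\partial\sigma=0$, to $\Omega^{m-1}\cong T'M\otimes K_M$, and to ``divergence-free plus first Bianchi'' are first-order differential conditions on the coefficients of $\sigma$ and do not produce the missing holomorphic-sectional terms. (There is also a sign slip: with your displayed identity $\int(|\nabla\sigma|^2+\mathcal Q_p(\sigma))=0$, you need $\mathcal Q_p(\sigma)\ge 0$, not $-\mathcal Q_p(\sigma)\ge c|\sigma|^2$.)

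The paper's argument is genuinely different and avoids this obstruction by working \emph{pointwise} at the maximum $x_0$ of $|s|^2$, which yields $\sum_{k}R_{v\bar v i_k\bar i_k}\le 0$ for \emph{every} direction $v$, not just after tracing. For $p=m-1$ one then simply takes $v=E_1$ orthogonal to the form to get $Ric^{\perp}(E_1,\overline E_1)\le 0$. For $p=2$ the paper does not work with $s$ directly but with $\sigma=s^r$, where $r$ is maximal with $s^r\not\equiv 0$; this $\sigma$ is \emph{decomposable} at its maximum, so one obtains $\sum_{i=1}^{2r}R_{i\bar i v\bar v}\le 0$ for all $v$. Combining this (for $v\in W=\mathrm{span}(E_1,\dots,E_{2r})$ and $v\in W^\perp$) with $\sum_{j=1}^{2r}Ric^{\perp}(E_j,\overline E_j)>0$ and a Berger-type averaging over $U(2r)$ yields the contradiction $S|_W>0\ge S|_W$. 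Your Youla normal-form reduction of $s$ itself does not produce a decomposable form, and without the maximum-principle freedom in $v$ and the averaging trick, the $p=2$ case does not go through.
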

In fact in Section 4 a stronger result is shown. Namely $h^{2, 0}=0$ (hence $M$ is projective) if the average of $Ric^\perp$ over two-planes is positive. An analogous result for $2$-scalar curvature was proved recently by authors \cite{Ni-Zheng2}.

For compact manifolds with $Ric^{\perp}<0$, one can obtain the following analogue of a result of Bochner \cite{wu}, which implies the finiteness of the automorphism group of such manifolds.

\begin{proposition}\label{bochner} Let $M^m$ be a compact K\"ahler manifold with $Ric^{\perp}<0$. Then there does not exists any nonzero holomorphic vector field.
\end{proposition}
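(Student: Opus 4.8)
The plan is to run the Bochner technique on the function $f:=|V|^{2}$ associated to a putative nonzero holomorphic vector field $V$, and then to apply the maximum principle, tracing the resulting Hessian inequality only over the directions orthogonal to $V$.

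First I would record the relevant Bochner identity. If $V$ is holomorphic then $\overline{\partial}V=0$, i.e. $\nabla_{\overline{\beta}}V^{\alpha}=0$ in holomorphic coordinates; using that the metric is parallel, that the mixed Christoffel symbols vanish for a K\"ahler metric, and commuting covariant derivatives on $\overline{V}$ to produce a curvature term, one gets the standard identity
$$ \nabla_{i}\nabla_{\overline{j}}\,|V|^{2}\;=\;\langle \nabla_{E_{i}}V,\ \nabla_{E_{j}}V\rangle\;-\;R\big(E_{i},\overline{E}_{j},V,\overline{V}\big) $$
in any local unitary frame $\{E_{i}\}$ of $T'M$, equivalently $\partial\overline{\partial}|V|^{2}=\langle \nabla V,\nabla V\rangle-R(\cdot,\cdot,V,\overline V)$ as $(1,1)$-forms. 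The only delicate point here is the sign of the curvature term, which is fixed by the convention in which ${\mathbb P}^{1}$ has positive holomorphic sectional curvature (and is easily double-checked on ${\mathbb P}^{1}$).

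Now suppose $V\not\equiv0$. Since $M$ is compact and $f=|V|^{2}\ge0$, $f$ attains its maximum at some point $x_{0}$, and $f(x_{0})>0$, so $V(x_{0})\neq0$. At $x_{0}$ the covariant Hessian $\nabla^{2}f$ is negative semidefinite, and since $\nabla^{2}f(Z,\overline Z)=\tfrac{1}{2}\big(\nabla^{2}f(X,X)+\nabla^{2}f(C(X),C(X))\big)$ for $Z=\tfrac{1}{\sqrt{2}}(X-\sqrt{-1}\,C(X))$, the $(1,1)$-form $\partial\overline{\partial}f$ is negative semidefinite at $x_{0}$. Choosing a unitary frame $\{E_{i}\}$ at $x_{0}$ with $E_{1}=V(x_{0})/|V(x_{0})|$, evaluating the identity above on $E_{j}$ for $j\geq2$ and using $\partial\overline{\partial}f\le0$ gives
$$ |\nabla_{E_{j}}V|^{2}\ \le\ R\big(E_{j},\overline{E}_{j},V,\overline V\big)\ =\ |V|^{2}\,R\big(E_{1},\overline{E}_{1},E_{j},\overline{E}_{j}\big), $$
where I used $V(x_{0})=|V(x_{0})|\,E_{1}$ together with the symmetry $R(X,\overline X,Y,\overline Y)=R(Y,\overline Y,X,\overline X)$. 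Summing over $j=2,\dots,m$ and recalling $Ric^{\perp}(E_{1},\overline E_{1})=\sum_{j=2}^{m}R(E_{1},\overline E_{1},E_{j},\overline E_{j})$ yields, at $x_{0}$,
$$ \sum_{j=2}^{m}|\nabla_{E_{j}}V|^{2}\ \le\ |V|^{2}\,Ric^{\perp}(E_{1},\overline E_{1})\ =\ Ric^{\perp}(V,\overline V). $$
But $Ric^{\perp}<0$ and $V(x_{0})\neq0$ make the right-hand side strictly negative, contradicting the nonnegativity of the left-hand side; hence $V\equiv0$.

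The decisive point --- and the only place the hypothesis enters --- is that one takes the partial trace of the Bochner inequality over the $(m-1)$-plane orthogonal to $V$ rather than the full trace: this discards exactly the holomorphic-sectional-curvature term $|V|^{2}R(E_{1},\overline E_{1},E_{1},\overline E_{1})$, whose sign is not controlled, and replaces $Ric$ by $Ric^{\perp}$ (the full trace would only recover the classical Bochner theorem under $Ric<0$). I do not anticipate a serious obstacle beyond sign bookkeeping in the Bochner identity; in particular the zero set of $V$ causes no trouble, since at the maximum of $|V|^{2}$ the vector field is automatically nonzero.
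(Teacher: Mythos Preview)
Your proof is correct and follows essentially the same approach as the paper's: apply the $\partial\overline{\partial}$-Bochner identity for a holomorphic vector field, evaluate at a maximum of $|V|^2$ to obtain $R_{v\overline{v}V\overline{V}}\ge 0$ for every direction $v$, and then sum only over a unitary basis of $\{V\}^{\perp}$ to produce $Ric^{\perp}(V,\overline{V})\ge 0$, contradicting the hypothesis. The paper's argument is more terse but identical in content; your version makes the frame choice and the partial-trace point explicit.
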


It is an interesting question to find out whether or not such a manifold always admits a metric of negative Ricci curvature. That is, if its first Chern class is negative, or equivalently, if its canonical line bundle is ample.

Examples of  K\"ahler metrics  concerning various curvatures mentioned above and their relations can be found in sections 4-8. Among them we construct unitary complete K\"ahler metrics on $\mathbb{C}^m$ which have (NOB), positive Ricci, but negative holomorphic sectional curvature somewhere. This answers affirmatively a question raised recently in \cite{Ni-Niu2}.

\section{Proof of comparisons}

We first prove Lemma \ref{lemma-easy}. It is an easy consequence of a result of Berger.
\begin{proof} By a formula due to Berger, at any point $p\in M$, a K\"ahler manifold,
$$
S(p)= \frac{m(m+1)}{Vol(\mathbb{S}^{2m-1})}\int_{|Z|=1, Z\in T'_pM} H(Z)\, d\theta(Z).
$$
On the other hand it is easy to check that
$$
S(p)= \frac{2m}{Vol(\mathbb{S}^{2m-1})}\int_{|Z|=1, Z\in T_p'M} Ric(Z, \overline{Z})\, d\theta(Z).
$$
They imply that
\begin{equation}\label{eq:scalar-ricciperp}
\frac{m-1}{2m(m+1)}S(p)=\frac{1}{Vol(\mathbb{S}^{2m-1})}\int_{|Z|=1, Z\in T_p'M} Ric^{\perp}(Z, \overline{Z})\, d\theta(Z).
\end{equation}
The claimed result follows from (\ref{eq:scalar-ricciperp}) easily.
\end{proof}
 One can also   prove the following estimate on the holomorphic sectional curvature in terms of the orthogonal Ricci curvature.
\begin{corollary} \label{Gold} When $m\ge 2$, at any point $p$, for unitary $Z\in T_p'M$ with  $H_p(Z)=\max_{|W|=1} H_p(W)$,
$$
 H_p(Z)\ge \frac{2}{m-1} Ric^{\perp}(Z, \overline{Z}).
$$
In fact for any $W$ which is perpendicular to $Z$, $H_p(Z)\ge 2 R(Z, \overline{Z}, W, \overline{W}).$
Similarly if unitary $Z'\in T'_pM$ satisfying $H_p(Z')=\min_{|W|=1} H_p(W)$, then for unitary $W\perp Z'$
$$
H_p(Z')\le 2 R(Z', \overline{Z'}, W, \overline{W});\quad H_p(Z')\le \frac{2}{m-1} Ric^{\perp}(Z', \overline{Z'}).
$$
\end{corollary}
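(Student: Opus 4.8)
The plan is to reduce both chains of inequalities to a single pointwise fact — that at an extremal direction of $H_p$ the holomorphic sectional curvature dominates (resp.\ is dominated by) twice the bisectional curvature against any orthogonal direction — and then to average this fact over a unitary frame.

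First I would treat the maximal direction. Let $Z\in \mathbb{S}^{2m-1}_p$ realize $H_p(Z)=\max_{|W|=1}H_p(W)$ and fix a unit vector $W\in T'_pM$ with $\langle W,\overline Z\rangle=0$ (possible since $m\ge 2$). Following the classical device of Bishop--Goldberg \cite{BishopGoldberg}, I would set $Z_t=Z+tW$ for small real $t$ and study $f(t):=H_p(Z_t)=R(Z_t,\overline{Z_t},Z_t,\overline{Z_t})/(1+t^2)^2$, using $|Z_t|^2=1+t^2$ and the degree-zero homogeneity of $H$. Since $f(t)=H_p(Z_t/|Z_t|)\le H_p(Z)=f(0)$, the point $t=0$ is an interior maximum of the smooth function $f$, so $f''(0)\le 0$ (one also gets $f'(0)=0$, though this is not needed below).

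Next I would compute $f''(0)$ by expanding $R(Z_t,\overline{Z_t},Z_t,\overline{Z_t})$ to second order in $t$ and multiplying by $(1+t^2)^{-2}=1-2t^2+O(t^4)$. There are six quadratic-in-$t$ curvature terms; using the Kähler symmetries $R_{i\bar jk\bar l}=R_{k\bar ji\bar l}=R_{i\bar lk\bar j}$ and $\overline{R_{i\bar jk\bar l}}=R_{j\bar il\bar k}$, four of them reduce to the real bisectional term $R(Z,\overline Z,W,\overline W)$ and the remaining two are $R(W,\overline Z,W,\overline Z)$ and its conjugate, so that $f''(0)=8R(Z,\overline Z,W,\overline W)+4\,\mathrm{Re}\,R(W,\overline Z,W,\overline Z)-4H_p(Z)$. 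From $f''(0)\le 0$ I get $H_p(Z)\ge 2R(Z,\overline Z,W,\overline W)+\mathrm{Re}\,R(W,\overline Z,W,\overline Z)$; then I would replace $W$ by $e^{\sqrt{-1}\,\theta}W$ — still a unit vector orthogonal to $Z$ — which leaves the bisectional term fixed but multiplies $R(W,\overline Z,W,\overline Z)$ by $e^{2\sqrt{-1}\,\theta}$, and choose $\theta$ so that this term becomes $|R(W,\overline Z,W,\overline Z)|\ge 0$. This yields $H_p(Z)\ge 2R(Z,\overline Z,W,\overline W)$, the ``in fact'' assertion.

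Finally I would average: completing $Z$ to a unitary frame $E_1=Z,E_2,\dots,E_m$ of $T'_pM$ and summing over $j=2,\dots,m$ gives $(m-1)H_p(Z)\ge 2\sum_{j\ge 2}R(Z,\overline Z,E_j,\overline E_j)=2\,Ric^{\perp}(Z,\overline Z)$, which is the first displayed inequality. For a unit $Z'$ realizing the minimum of $H_p$, the point $t=0$ becomes an interior minimum of the corresponding $f$, so $f''(0)\ge 0$, and the identical computation with the opposite choice of phase gives $H_p(Z')\le 2R(Z',\overline{Z'},W,\overline W)$ for every unit $W\perp Z'$; summing over a unitary frame of $(Z')^{\perp}$ gives the last two inequalities. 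There is no real obstacle here; the one step needing care is the bookkeeping in the computation of $f''(0)$ — correctly matching the six order-$t^2$ terms under the Kähler symmetries so as to obtain the multiplicities $4$ and $2$, and keeping track of the fact that the bisectional term is phase-invariant while $R(W,\overline Z,W,\overline Z)$ carries the weight $e^{2\sqrt{-1}\,\theta}$. Everything else is routine.
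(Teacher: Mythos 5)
Your proposal is correct, and it is essentially the approach the paper itself indicates: the paper's displayed computation is the exact four-point polarization identity $H(aZ\pm bW)+H(aZ\pm\sqrt{-1}bW)=4|a|^4H(Z)+4|b|^4H(W)+16|a|^2|b|^2R(Z,\overline Z,W,\overline W)$ combined with maximality, while your second-derivative test on $H(Z+tW)/|Z+tW|^4$ is exactly the infinitesimal version that the paper sketches as the ``more direct approach'' (there the cross term $R(W,\overline Z,W,\overline Z)$ is killed by averaging over $W$ and $\sqrt{-1}W$, whereas you dispose of it by optimizing the phase $e^{\sqrt{-1}\theta}W$ — both are fine). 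Your expansion of $f''(0)$ and the final averaging over a unitary frame of $Z^{\perp}$ are correct.
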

\begin{proof} For any complex number $a, b$ and $Z, W\in T'_pM$, it is easy to check that
\begin{eqnarray*}&\,&
H(aZ\!+\!bW)+H(aZ\!-\!bW)+H(aZ\!+\!\sqrt{\!-\!1} bW)+H(aZ\!-\!\sqrt{\!-\!1}bW)\\
&\,= &   4|a|^4 H(Z)
 +4 |b|^4 H(W)+16 |a|^2 |b|^2 R(Z, \overline{Z}, W, \overline{W}).
\end{eqnarray*}
For the unitary vector $Z$ and $W$ we choose $a, b$ such that $|a|^2+|b|^2=1$. Then if $Z$ attains the maximum of the holomorphic sectional curvature, for $W\perp Z$,
$$
4H(Z)\ge 4|a|^4 H(Z)
  +4 |b|^4 H(W)+16 |a|^2 |b|^2 R(Z, \overline{Z}, W, \overline{W}).
$$
The estimate $H(Z)\ge 2 R(Z, \overline{Z}, W, \overline{W})$ follows from the above. The claim on the orthogonal Ricci follows easily. For the minimal holomorphic sectional curvature, one can simply flip the above argument. A more direct approach is to consider function
$
f(\theta)=H(\cos \theta Z'+\sin \theta W)
$. The second derivative test applying to $f(\theta)$ and the one replacing $W$ by $\sqrt{\!-\!1}W$ implies the claimed estimate.
\end{proof}

 Before we prove the comparison theorem, let us recall some basics regarding the normal geodesics, the Jacobi fields with respect to a submanifold, the distance function and the tubular hypersurface with respect to a Riemannian submanifold  $P$ (only later we assume that $P$ is a complex submanifold).  Let $\mathcal{N}(P)$ denote the normal bundle of $P$. For any section $\nu(x)$ of the normal bundle  the exponential map $\exp_P$ can be defined as $\exp_x(\nu(x))$. First recall the concept of the {\it $P$-Jacobi field} along a {\it normal} geodesic $\gamma_u(\eta)$ with $u=\gamma'(0)\perp P$ at $p=\gamma(0)$. A Jacobi field $J(\eta)$ is called a $P$-Jacobi field along $P$ if it satisfies
$J(0)\in T_p P$ and $J'(0)-A_{\gamma'(0)} J(0)\perp T_p P$, where $A_{u}(\cdot)$ is the shape operator in the normal direction $u$. It is easy to check that if $\gamma(\eta, t)$ is a family of normal geodesics, with $\gamma(0, t)\in P$ and  $\frac{D \gamma}{\partial \eta} (0, t)\in T^{\perp}_{\gamma(0,t)}P$, $J(\eta)=\frac{D}{\partial t }\gamma(\eta, 0)$ is a $P$-Jacobi field. An elementary fact is that $\left. d\exp_P\right|_{\ell u}$ is degenerate if any only if there exists a non-zero $P$-Jacobi field $J(\eta)$ such that $J(\ell)=0$. The point $\gamma_u(\ell)$ is called a {\it focal point} (with respect to $P$).  The boundary operator $\frac{D J}{\partial \eta}-A_{\gamma'(0)} J(0)$ also arises from the second variation of the energy for a variation of pathes $\gamma(\eta, t)$ with the initial points in $P$ and a fixed end point:
$$
\left. \frac{d^2}{dt^2}\right|_{t=0} \mathcal{E}(\gamma)=\int_0^{\ell} |\nabla X|^2-\langle R(X, \gamma')\gamma', X\rangle \, d\eta +\langle A_{\gamma'(0)}(X(0)), X(0)\rangle
$$
with $X=\frac{D \gamma}{\partial t} (\eta, 0)$ being the tangent vector. Here $\mathcal{E}(\gamma)=\int_0^\ell |\frac{D \gamma}{\partial \eta}|^2\, d\eta$.  The polarization of the right hand side is called the index form. Namely the index form $I(X, Y)$ is given by
$$
I(X, Y)=\int_0^{\ell}\langle  \nabla X, \nabla Y\rangle -\langle R(X, \gamma')\gamma', Y\rangle \, d\eta  +\langle A_{\gamma'(0)}(X(0)), Y(0)\rangle-\langle A_{\gamma'(\ell)}(X(\ell)), Y(\ell)\rangle.
$$
Here the second boundary term enters only for the more general case that the ending points $\gamma(\ell, t)$ lying inside another submanifold $P'$. Allowing this flexibility is useful in \cite{Ni-Wolfson, Schoen-Wolfson}, but not needed when consider the distance function $\rho(x)$. We denote the index form (along $\gamma$) with $P'$ being a point as $I^{P}_{\gamma}(\cdot, \cdot)$ (otherwise we denote it as $I^{P, P'}_{\gamma}(\cdot, \cdot)$).

An easy but useful observation is the following relation between the Hessian of the distance function and the index form. Namely
\begin{equation}\label{eq-trans}
\left.\operatorname{Hessian}(\rho)\right|_{\rho(x)=\ell} (X, Y)= II_{\nabla \rho}(X, Y)=I^P_{\gamma [0, \ell]}(J_1, J_2)
\end{equation}
if $J_i(\eta)$ are $P$-Jacobi fields (in the case $P=\{p\}$ a point the assumption is equivalent to $J_i(0)=0$) and  $J_1(\ell)=X$, $J_2(\ell)=Y$. Here $II$ denotes the second fundamental form of hypersurface $\{x\, |\, \rho(x)=\ell\}$. In short {\it the Hessian of $\rho$, restricted to the subspace perpendicular to $\nabla \rho$, is the same as the index form, which  in turn is  the  same as the second fundamental form of the tubular hypersurface (of $P$) with respect to the unit exterior normal $\nabla \rho$.}

Another useful result is the index comparison lemma.

\begin{lemma}\label{lmm-index} Assume that $\gamma:[0, \ell]$ is a normal geodesic originated from $P$. Assume that there exists no focal point along $\gamma$. Let $X$ and $Y$ be two vector fields along $\gamma$ with $X$ being a $P$-Jacobi field, such that $ Y(0)\in T_{\gamma(0)}P$ and $X(\ell)=Y(\ell)$. Then
$$
I^P_{\gamma}(X, X)\le I^P_{\gamma}(Y, Y).
$$
The equality holds if any only if $Y(\eta)=X(\eta)$ for $\eta\in [0, \ell]$.
\end{lemma}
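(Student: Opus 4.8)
The plan is to run the classical Morse index argument, adapted to the $P$-boundary condition. The relevant index form here is the one with a single free endpoint (the terminal point is fixed), so it carries only the boundary term $\langle A_{\gamma'(0)}(\cdot),(\cdot)\rangle$ at $\eta=0$; I use this form $I^P_\gamma$ throughout. First, decompose $X$ and $Y$ into their components along $\gamma'$ and along $(\gamma')^\perp$. The $\gamma'$-component of a $P$-Jacobi field is affine and vanishes at $\eta=0$; the $\gamma'$-component of $Y$ also vanishes at $0$ (since $Y(0)\in T_{\gamma(0)}P\perp\gamma'(0)$) and has the same value at $\ell$; and all cross terms in the index form vanish. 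Hence, writing $b(\eta)\gamma'$ for the $\gamma'$-component of $Y$, the inequality on the $\gamma'$-directions is exactly the Cauchy--Schwarz bound $\int_0^\ell (b')^2\,d\eta\ge\frac1\ell\big(b(\ell)-b(0)\big)^2$, with its equality case $b'\equiv\mathrm{const}$. So one may assume $X,Y$ are sections of $(\gamma')^\perp$.

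The second step is the identity that the index form of a $P$-Jacobi field against any competitor is a pure boundary term. For $X$ a $P$-Jacobi field and any field $Z$ along $\gamma$ with $Z(0)\in T_{\gamma(0)}P$, integrating $\int_0^\ell\langle X',Z'\rangle\,d\eta$ by parts and substituting the Jacobi equation $X''=-R(X,\gamma')\gamma'$ gives
$$
I^P_\gamma(X,Z)=\langle X'(\ell),Z(\ell)\rangle-\langle X'(0)-A_{\gamma'(0)}X(0),\,Z(0)\rangle=\langle X'(\ell),Z(\ell)\rangle,
$$
since the $P$-Jacobi boundary condition makes $X'(0)-A_{\gamma'(0)}X(0)$ orthogonal to $T_{\gamma(0)}P\ni Z(0)$. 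Taking $Z=X$ and $Z=Y$ and using $X(\ell)=Y(\ell)$ gives $I^P_\gamma(X,X)=I^P_\gamma(X,Y)=\langle X'(\ell),X(\ell)\rangle$, hence $I^P_\gamma(X,W)=0$ for $W:=Y-X$, and by bilinearity
$$
I^P_\gamma(Y,Y)=I^P_\gamma(X,X)+I^P_\gamma(W,W),\qquad W(0)\in T_{\gamma(0)}P,\quad W(\ell)=0.
$$
So the lemma reduces to showing that $I^P_\gamma$ is positive definite on fields $W$ with $W(0)\in T_{\gamma(0)}P$ and $W(\ell)=0$.

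For this I would use the absence of focal points on $(0,\ell]$: it provides a basis $J_1,\dots,J_{n-1}$ of $(\gamma')^\perp$-valued $P$-Jacobi fields with $J_1(\eta),\dots,J_{n-1}(\eta)$ independent for every $\eta\in(0,\ell]$, so $W=\sum_i f_iJ_i$ with smooth $f_i$ on $(0,\ell]$. The Wronskian $\langle J_i',J_j\rangle-\langle J_i,J_j'\rangle$ is $\eta$-independent and vanishes at $\eta=0$, because there $\langle J_i'(0),J_j(0)\rangle=\langle A_{\gamma'(0)}J_i(0),J_j(0)\rangle$ is symmetric in $i,j$ by self-adjointness of the shape operator; hence $\langle J_i',J_j\rangle=\langle J_i,J_j'\rangle$ identically. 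Combining this with the Jacobi equation yields the pointwise identity
$$
|W'|^2-\langle R(W,\gamma')\gamma',W\rangle=\Big|\sum_i f_i'J_i\Big|^2+\frac{d}{d\eta}\sum_{i,j}f_if_j\langle J_i',J_j\rangle .
$$
Integrating over $[0,\ell]$: the $\eta=\ell$ boundary term dies since $W(\ell)=0$ forces $f_i(\ell)=0$; the $\eta=0$ boundary term is $-\langle A_{\gamma'(0)}W(0),W(0)\rangle$, which cancels exactly the $\eta=0$ term in the definition of $I^P_\gamma$. Thus $I^P_\gamma(W,W)=\int_0^\ell|\sum_i f_i'J_i|^2\,d\eta\ge0$, and equality forces $\sum_i f_i'J_i\equiv0$, hence every $f_i$ constant, hence $f_i\equiv0$ by $f_i(\ell)=0$, i.e.\ $W\equiv0$. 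With the previous step this gives $I^P_\gamma(X,X)\le I^P_\gamma(Y,Y)$ and equality iff $Y\equiv X$.

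The point needing the most care is that the coefficients $f_i$ extend smoothly across $\eta=0$, which legitimizes the boundary term there. For $P$ a point this is automatic; in the submanifold case the vectors $J_i(0)\in T_{\gamma(0)}P$ may be dependent, so $[J_1(\eta),\dots,J_{n-1}(\eta)]$ degenerates at $0$. I would fix this by choosing the basis adapted to $P$: take $J_1(0),\dots,J_k(0)$ ($k=\dim P$) a basis of $T_{\gamma(0)}P$ with $J_i'(0)=A_{\gamma'(0)}J_i(0)$, and $J_{k+1},\dots,J_{n-1}$ with $J_i(0)=0$ and $J_i'(0)$ completing a frame of $(T_{\gamma(0)}P)^\perp\cap(\gamma')^\perp$, so that $[J_i(\eta)]=M(\eta)\,\mathrm{diag}(1,\dots,1,\eta,\dots,\eta)$ with $M(0)$ invertible; the hypothesis $W(0)\in T_{\gamma(0)}P$ is exactly what makes the a priori singular components $\eta^{-1}(M(\eta)^{-1}W(\eta))_i$ for $i>k$ vanish to first order at $\eta=0$, so the $f_i$ are smooth after all. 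This is the only place where the submanifold version genuinely differs from the classical point-endpoint one.
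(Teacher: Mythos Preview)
Your proof is correct. The paper itself does not give a full proof of this lemma: it refers to Sakai (Chapter III, Lemma 2.10) and then sketches an \emph{alternate} argument via a Rayleigh-quotient/eigenvalue continuation---treating $I^P_\gamma$ as a quadratic form with Robin boundary condition at $\eta=0$ and Dirichlet at $\eta=\ell$, observing that the bottom eigenvalue is positive for small $\ell$ and can drop to zero only at a focal point.

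Your route is the classical constructive one: reduce to $I^P_\gamma(W,W)\ge 0$ for $W$ with $W(0)\in T_{\gamma(0)}P$, $W(\ell)=0$, write $W=\sum f_iJ_i$ in a basis of perpendicular $P$-Jacobi fields, and collapse the integrand to $|\sum f_i'J_i|^2$ using the Wronskian symmetry and the Jacobi equation. This is essentially the proof one would find in the reference the paper cites, carried out with the submanifold boundary term. Compared to the paper's eigenvalue sketch, your argument is more explicit and yields the equality characterization directly from $f_i'\equiv 0$, whereas the eigenvalue argument gets it by identifying a zero-eigenvalue minimizer with a $P$-Jacobi field vanishing at $\ell$. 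Your final paragraph on the regularity of the $f_i$ at $\eta=0$---splitting the basis into fields with $J_i(0)$ spanning $T_{\gamma(0)}P$ and fields with $J_i(0)=0$, $J_i'(0)$ normal---is exactly the care this step needs in the submanifold setting, and is a point the paper's sketch does not address.
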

One can refer to \cite{Sakai} (cf. Chaper III, Lemma 2.10). In fact for any such $Y$, there exists a $P$-Jacobi field $X$ such that $Y(\ell)=X(\ell)$. An alternate proof is the following. First the index form can be used (replacing the Dirichlet energy) to define a Reilly quotient on the vector fields which are perpendicular to $\gamma'(\eta)$ and are tangent to the submanifolds (in the case $P'=\{x_0\}$, requiring vanishing boundary at $\gamma(\ell)$) at both ends. Then clearly the associated infinimum, namely the associated eigenvalue (which satisfies a Robin boundary condition at $\eta=0$ and Dirichlet condition at $\eta=\ell$)  is very positive for $\ell $ small. The positivity remains until a zero eigenvalue, namely a conjugate point (which is defined as when a non-zero eigenvector satisfying the Euler-Lagrange equation of the index form with suitable boundary condition, namely a $P$-Jacobi vector, can be obtained) is reached.

  For complex space form a useful lemma for this case is the following.
\begin{lemma} \label{lmm-csf}If $(\tilde{M}^m, g)$ is a K\"ahler manifold with constant holomorphic sectional curvature $2\lambda$. Let $n=2m$ be the real dimension and $\{\tilde{e}_i\}$ be a orthornormal  then
$$
R_{\tilde{e}_n, \tilde{e}_k} \tilde{e}_n =\frac{\lambda}{2} \tilde{e}_k, \mbox { if } \tilde{e}_k\perp \tilde{e}_n, C(\tilde{e}_n);  R_{\tilde{e}_n, \tilde{e}_k} \tilde{e}_n=2\lambda \tilde{e}_k, \mbox{ if } \tilde{e}_k=C(\tilde{e}_n).
$$
\end{lemma}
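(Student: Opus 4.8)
The plan is to prove Lemma~\ref{lmm-csf} by a direct computation from the explicit curvature tensor of a complex space form. Recall that on a K\"ahler manifold of constant holomorphic sectional curvature the full Riemann tensor is determined by that single number; with the convention of the Introduction (for which $R(X,C(X),C(X),X)=H(X)$ and the complex space form has $H\equiv 2\lambda$), one has, for real vectors,
\begin{align*}
R(X,Y,Z,W) = \frac{\lambda}{2}\Big(&\langle X,W\rangle\langle Y,Z\rangle - \langle X,Z\rangle\langle Y,W\rangle + \langle C(X),W\rangle\langle C(Y),Z\rangle \\
&- \langle C(X),Z\rangle\langle C(Y),W\rangle - 2\langle C(X),Y\rangle\langle C(Z),W\rangle\Big),
\end{align*}
equivalently $R_{i\bar j k\bar l}=\lambda(\delta_{ij}\delta_{kl}+\delta_{il}\delta_{jk})$ in any unitary frame, which indeed gives $R_{1\bar 1 1\bar 1}=2\lambda$. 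The normalization of the coefficient can itself be recovered from the constancy of $H$: specializing the polarization identity used in the proof of Corollary~\ref{Gold} to $H\equiv 2\lambda$ yields $R(Z,\overline Z,W,\overline W)=\lambda$ for every Hermitian-orthonormal pair $Z,W\in T'M$, and the displayed formula is the unique algebraic K\"ahler curvature tensor with this property.

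With the formula in hand, the statement reduces to evaluating the components $\langle R_{\tilde e_n,\tilde e_k}\tilde e_n,\tilde e_j\rangle=R(\tilde e_k,\tilde e_n,\tilde e_n,\tilde e_j)$ (the quantity entering the index form) in the two cases. First I would take $\tilde e_k,\tilde e_j\in\{\tilde e_n,C(\tilde e_n)\}^\perp$: substituting $X=\tilde e_k$, $Y=Z=\tilde e_n$, $W=\tilde e_j$, every term carrying an inner product of $\tilde e_n$ or $C(\tilde e_n)$ against $\tilde e_k$ or $\tilde e_j$ drops out (using also $\langle C(\tilde e_k),\tilde e_n\rangle=-\langle\tilde e_k,C(\tilde e_n)\rangle=0$ and $\langle C(\tilde e_n),\tilde e_n\rangle=0$), leaving $\tfrac{\lambda}{2}\langle\tilde e_k,\tilde e_j\rangle=\tfrac{\lambda}{2}\delta_{kj}$; the $\tilde e_n$-component of $R_{\tilde e_n,\tilde e_k}\tilde e_n$ vanishes by antisymmetry and its $C(\tilde e_n)$-component vanishes again by the formula, so $R_{\tilde e_n,\tilde e_k}\tilde e_n=\tfrac{\lambda}{2}\,\tilde e_k$. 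Next I would take $\tilde e_k=C(\tilde e_n)$ and substitute $X=C(\tilde e_n)$, $Y=Z=\tilde e_n$; using $C^2=-\mathrm{id}$ and $\langle C(\tilde e_n),\tilde e_n\rangle=0$, the surviving terms collect to $\tfrac{\lambda}{2}\cdot 4\,\langle C(\tilde e_n),\tilde e_j\rangle=2\lambda\langle C(\tilde e_n),\tilde e_j\rangle$, i.e.\ $R_{\tilde e_n,C(\tilde e_n)}\tilde e_n=2\lambda\,C(\tilde e_n)$. That settles both cases.

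There is no real obstacle here --- this is a classical property of complex space forms and the argument is pure bookkeeping --- so the only point requiring care is the normalization and sign conventions: that $R_{\tilde e_n,\tilde e_k}\tilde e_n$ denotes the Jacobi-type operator $R(\tilde e_k,\tilde e_n)\tilde e_n$ occurring in the index form (hence $\geq 0$ in the positively curved model), and that the coefficient $\lambda/2$ in the curvature tensor above is fixed by $H\equiv 2\lambda$ in the convention of \cite{Tian}. Alternatively, one can bypass the closed formula and argue only with sectional curvatures of the $2$-planes $\operatorname{span}\{\tilde e_n,\tilde e_k\}$, via the real expansion $R(Z,\overline Z,W,\overline W)=R(X,Y,Y,X)+R(X,C(Y),C(Y),X)$ from the Introduction together with $R(Z,\overline Z,W,\overline W)=\lambda$ and $H=2\lambda$; but then one must separately check that the off-diagonal components of $R_{\tilde e_n,\tilde e_k}\tilde e_n$ vanish, which is precisely what the closed formula disposes of in one stroke.
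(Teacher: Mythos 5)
Your proof is correct: the explicit constant-holomorphic-sectional-curvature tensor (normalized so that $H\equiv 2\lambda$, equivalently $R(Z,\overline Z,W,\overline W)=\lambda$ for unitary $Z\perp W$ via the polarization identity) immediately yields the eigenvalues $\lambda/2$ on $\{\tilde e_n, C(\tilde e_n)\}^{\perp}$ and $2\lambda$ on $C(\tilde e_n)$ for the Jacobi operator, and your sign/ordering remarks correctly pin down the convention the paper intends. The paper states this lemma without proof as a classical fact about complex space forms, and your computation is precisely the standard verification it is implicitly relying on.
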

If one only wants a formula in the right hand side of the comparison, and does not care about the geometric meanings of the right hand side (such as in \cite{Liu}), one does not need the above lemma.

Now we can prove Theorem \ref{thm-com1}. Assume that $\gamma(\eta)$ and $\tilde{\gamma}(\eta)$ are two minimizing geodesics in $M$ and $\tilde{M}$. At $\gamma(\ell)$, let $\{e_i\}_{i=1}^{n=2m}$ be an orthonormal frame with $e_{2k}=C(e_{2k-1})$, and  $e_n=\nabla \rho$ and $e_{n-1}=-C(e_n)$ (namely $e_n=C(e_{n-1})$). By the definition,
$\Delta^{\perp} \rho=\frac{1}{2}\sum_{i=1}^{2m-2} \nabla^2 \rho( e_i, e_i)$. Let $\{\tilde{e}_i\}$ be the corresponding frame at $\tilde{\gamma}(\ell)$. Parallel transplant them along $\gamma$ and $\tilde{\gamma}$. By Lemma \ref{lmm-csf}, the Jacobi fields are given by $\tilde{J}_i(\eta)=\frac{S_{\frac12 \lambda} (\eta)}{S_{\frac12 \lambda}(\ell)} \tilde{e}_i(\eta)$ , for $1\le i\le 2m-2$, and $\tilde{J}_i(\eta)=\frac{S_{2 \lambda} (\eta)}{S_{2 \lambda}(\ell)} \tilde{e}_i(\eta)$ for $i=2m-1$. Here $$
S_\kappa(t) \doteqdot \left\{ \begin{matrix} \frac{1}{\sqrt{\kappa}}\sin \sqrt{\kappa} t, & \, \kappa>0,\cr
                                   t, &\, \kappa=0, \cr
                                   \frac{1}{\sqrt{|\kappa|}}\sinh \sqrt{|\kappa|}t, &\, \kappa<0;\end{matrix}\right.\quad S'_\kappa(t)\doteqdot \frac{d}{dt} S_\kappa(t); \quad \cot_\kappa(t)=\frac{S'_\kappa(t)}{S_\kappa(t)}.
                                   $$
Transplant $\{\tilde{J}_i(\eta)\}_{i=1}^{2m-2}$ along $\gamma(\eta)$ by letting $\overline{J}_i(\eta)= \frac{S_{\frac12 \lambda} (\eta)}{S_{\frac12 \lambda}(\ell)} e_i(\eta)$ we obtain $2m-2$ orthogonal vector fields along $\gamma(\eta)$ with $\overline{J}_i(\ell)=e_i(\ell)$ and $\overline{J}_i(0)=0$. Let $J_i(\eta)$ be the Jacobi fields with $J_i(\ell)=e_i$. Then
\begin{eqnarray*}
\left. 2\Delta^{\perp} \rho\right|_{\rho(x)=\ell}&=& \sum_{i=1}^{2m-2} \langle J'_i(\ell), J_i(\ell)\rangle= \sum_{i=1}^{2m-2}I_{\gamma[0, \ell]}(J_i, J_i);\\
\left. 2\Delta^{\perp} \tilde{\rho}\right|_{\tilde{\rho}(x)=\ell}&=& \sum_{i=1}^{2m-2} \langle \tilde{J}'_i(\ell), \tilde{J}_i(\ell)\rangle \sum_{i=1}^{2m-2}I_{\tilde{\gamma}[0, \ell]}(\tilde{J}_i, \tilde{J}_i).
\end{eqnarray*}
The curvature assumption, together with the initial conditions $J_i(0)=\tilde{J}_i(0)=0$,  implies that
$$
\sum_{i=1}^{2m-2}I_{\gamma[0, \ell]}(\overline{J}_i, \overline{J}_i)\le \sum_{i=1}^{2m-2}I_{\tilde{\gamma}[0, \ell]}(\tilde{J}_i, \tilde{J}_i).
$$
The result then follows from the index form comparison Lemma \ref{lmm-index}. This completes the proof on the comparison of $\Delta^{\perp} \rho$.

To prove the comparison on the complex Hessian, note that $\nabla^2 \rho(Z, \overline{Z}) =\frac{1}{2}\nabla^2 \rho (e_{n-1}, e_{n-1})$, where $Z=\frac{1}{\sqrt{2}}\left(\nabla \rho-\sqrt{-1}C(\nabla \rho)\right)$. Now let
$\overline{J}_{n-1}(\eta)=\frac{S_{2\lambda}(\eta)}{S_{2\lambda}(\ell)} e_{n-1}$ as before. It is easy to check that
$ \overline{J}_{n-1}(0)=0$ and $\overline{J}'_{n-1}(0)\perp T_{\gamma(0)}P$ (no need to check this for the previous case since $P=\{x_0\}$ being a point). Now the assumption on the holomorphic sectional curvature implies that
$$
I_{\gamma[0, \ell]}(\overline{J}_{n-1}, \overline{J}_{n-1})\le I_{\tilde{\gamma}[0, \ell]}(\tilde{J}_{n-1}, \tilde{J}_{n-1}).
$$
 The claimed result again follows from the index form comparison Lemma \ref{lmm-index}.

\section{Extensions}

First we prove the Theorems \ref{thm-com11} and \ref{thm-com2}. The proof of Theorem \ref{thm-com2} follows verbatim as the proof of Theorem \ref{thm-com1}. For Theorem \ref{thm-com11}, we  construct of the vector fields $\{\overline{J}_i\}$ satisfying  different boundary conditions at $\eta=0$.  First we define
$$
C_\kappa(t) \doteqdot \left\{ \begin{matrix} \frac{1}{\sqrt{\kappa}}\cos \sqrt{\kappa} t, & \, \kappa>0,\cr
                                   1, &\, \kappa=0, \cr
                                   \frac{1}{\sqrt{|\kappa|}}\cosh \sqrt{|\kappa|}t, &\, \kappa<0;\end{matrix}\right.\quad C'_\kappa(t)\doteqdot \frac{d}{dt} C_\kappa(t); \quad  \tan_\kappa(t)=\frac{C'_\kappa(t)}{C_\kappa(t)}.
                                   $$
Now we let $\overline{J}_i(\eta)=\frac{C_{\frac{\lambda}{2}}(\eta)}{C_{\frac{\lambda}{2}}(\ell)} e_i(\eta)$.   Since at $\eta=0$, $e_n(0)=\gamma'(0)$ and $e_{n-1}(0)=C(\gamma'(0))$ are perpendicular to $P$, $\{e_{i}(0)\}_{i=1}^{2m-2}$ are tangent to $P$. Since $P$ is minimal
$$
\sum_{i=1}^{2m-2} \langle A_{\gamma'(0)}(\overline{J}_i(0)), \overline{J}_i(0)\rangle =0.
$$
Hence (if we adapt the Einstein convention)
$$
\sum_{i=1}^{2m-2}I_{\gamma[0, \ell]}(\overline{J}_{i}, \overline{J}_{i})= \int_0^\ell \|\overline{J}'_{i}\|^2-\langle R_{\overline{J}_i, \gamma'}\gamma', \overline{J}_i\rangle =\frac{1}{C^2_{\frac{\lambda}{2}}(\ell)} \int_0^\ell (2m-2)(C'_{\frac{\lambda}{2}})^2-C^2_{\frac{\lambda}{2}}Ric^{\perp}(\gamma', \gamma') .
$$
Then Theorem \ref{thm-com11}  follows from the index comparison Lemma \ref{lmm-index} and direct calculation of the right hand above (for $\sum_{i=1}^{2m-2}I_{\tilde{\gamma}[0, \ell]}(\tilde{J}_{i}, \tilde{J}_{i})$).

The argument above  can be extended to the case that $P$ is a Levi-flat real hypersurface, observing that the boundary term vanishes due to the Levi-flatness (cf. \cite{Ni-Wolfson}).

\begin{corollary}\label{coro-levi}Let $(M^m, g)$ be a K\"ahler manifold with $Ric^{\perp}(X, X)\ge (m-1) \lambda |X|^2$. Let $(\tilde{M}, \tilde{g})$ be the complex space form with constant holomoprhic sectional curvature $2\lambda$. Let $\rho(x)$ be the distance function to a real Levi flat  hypersurface $P$ (and $\tilde{\rho}$ be the corresponding distance function to a totally geodesic complex hypersurface $\tilde{P}$). Then for point $x$, which is not in the focal locus of $P$,
$$
\Delta^{\perp} \rho (x) \le \Delta^{\perp}\tilde{\rho}\left.\right|_{\tilde{\rho}=\rho(x)}=(m-1)\tan_{\frac{\lambda}{2}}(\rho).
$$
\end{corollary}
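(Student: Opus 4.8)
The plan is to run the proof of Theorem~\ref{thm-com11} essentially verbatim; the one new point is that the boundary term of the index form still vanishes, now as a consequence of the Levi-flatness of $P$ rather than of the minimality of a complex hypersurface. First I would fix a point $x$ outside the focal locus of $P$, put $\ell=\rho(x)$, and take the minimizing normal geodesic $\gamma:[0,\ell]\to M$ from $P$ to $x$, with $\gamma(0)\in P$, $\gamma'(0)\perp P$ and $\gamma(\ell)=x$ (so that there is no focal point of $P$ on $[0,\ell]$). At $\gamma(\ell)$ I choose an orthonormal frame $\{e_i\}_{i=1}^{n}$ with $e_n=\nabla\rho=\gamma'(\ell)$, $e_{n-1}=-C(e_n)$ and $e_{2k}=C(e_{2k-1})$, and parallel transport it along $\gamma$. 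The only structural change from Theorem~\ref{thm-com11} is dimensional: since $P$ now has real codimension one, the normal space of $P$ at $\gamma(0)$ is the real line $\mathbb{R}\gamma'(0)$, the vector $C(\gamma'(0))$ is automatically tangent to $P$ (for any real hypersurface $C(\nu)$ is orthogonal to both $\nu$ and to the normal line, hence lies in the tangent space), and $\{e_i(0)\}_{i=1}^{2m-2}$ is a $C$-adapted orthonormal basis of the maximal complex subspace $\mathcal{H}_{\gamma(0)}=T_{\gamma(0)}P\cap C(T_{\gamma(0)}P)$, which is contained in $T_{\gamma(0)}P$.

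Next I would use the same competitor vector fields as in Theorem~\ref{thm-com11}, namely $\overline{J}_i(\eta)=\dfrac{C_{\frac{\lambda}{2}}(\eta)}{C_{\frac{\lambda}{2}}(\ell)}\,e_i(\eta)$ for $1\le i\le 2m-2$; these satisfy $\overline{J}_i(\ell)=e_i(\ell)$ and $\overline{J}_i(0)\in\mathcal{H}_{\gamma(0)}\subset T_{\gamma(0)}P$, so they are admissible in the index comparison Lemma~\ref{lmm-index}. Summing the index forms and using $\sum_{i=1}^{2m-2}\langle R(e_i,\gamma')\gamma',e_i\rangle=Ric^{\perp}(\gamma',\gamma')$ gives
$$\sum_{i=1}^{2m-2}I^{P}_{\gamma[0,\ell]}(\overline{J}_i,\overline{J}_i)=\frac{1}{C^2_{\frac{\lambda}{2}}(\ell)}\int_0^\ell\left((2m-2)(C'_{\frac{\lambda}{2}})^2-C^2_{\frac{\lambda}{2}}\,Ric^{\perp}(\gamma',\gamma')\right)d\eta+\frac{C^2_{\frac{\lambda}{2}}(0)}{C^2_{\frac{\lambda}{2}}(\ell)}\,\tr_{\mathcal{H}_{\gamma(0)}}A_{\gamma'(0)}.$$
The step I expect to be the crux is the vanishing of the last term. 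For a real hypersurface the Levi form at $\gamma(0)$, evaluated on $X\in\mathcal{H}_{\gamma(0)}$, equals up to sign $\langle A_{\gamma'(0)}X,X\rangle+\langle A_{\gamma'(0)}C(X),C(X)\rangle$; hence the Levi-flat hypothesis forces every such partial trace over a complex line to vanish, so $\tr_{\mathcal{H}_{\gamma(0)}}A_{\gamma'(0)}=0$ and the boundary term drops out (this is the observation of \cite{Ni-Wolfson}). Establishing this identification is really the only new computation: it replaces the sentence ``since $P$ is minimal, $\sum\langle A_{\gamma'(0)}(\overline{J}_i(0)),\overline{J}_i(0)\rangle=0$'' in the proof of Theorem~\ref{thm-com11}, while everything else is a transcription.

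Finally I would bound the remaining bulk integral using $Ric^{\perp}\ge(m-1)\lambda$ and compare, via (\ref{eq-trans}) and Lemma~\ref{lmm-index}, with the model: $2\,\Delta^{\perp}\rho(x)=\sum_{i}I^{P}_{\gamma[0,\ell]}(J_i,J_i)\le\sum_{i}I^{P}_{\gamma[0,\ell]}(\overline{J}_i,\overline{J}_i)$, where $J_i$ is the $P$-Jacobi field with $J_i(\ell)=e_i(\ell)$. On the space-form side the computation is identical to that of Theorem~\ref{thm-com11}: the genuine Jacobi fields along a geodesic normal to the totally geodesic complex hypersurface $\tilde{P}$ give $2\,\Delta^{\perp}\tilde{\rho}\left.\right|_{\tilde{\rho}=\ell}=(2m-2)\tan_{\frac{\lambda}{2}}(\ell)$. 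Combining the two yields $\Delta^{\perp}\rho(x)\le(m-1)\tan_{\frac{\lambda}{2}}(\rho(x))$. Since $\tilde{P}$ and the right-hand side are unchanged from Theorem~\ref{thm-com11}, no separate model computation is needed; the whole argument reduces to the boundary-term identity above, which is exactly where the Levi-flat hypothesis enters.
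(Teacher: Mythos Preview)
Your proposal is correct and follows exactly the approach the paper intends: run the proof of Theorem~\ref{thm-com11} verbatim, with the sole modification that the vanishing of the boundary term $\sum_{i=1}^{2m-2}\langle A_{\gamma'(0)}\overline{J}_i(0),\overline{J}_i(0)\rangle$ now comes from Levi-flatness rather than minimality. The paper's own proof is the one-line remark preceding the corollary, citing \cite{Ni-Wolfson} for precisely the identity you spell out (that the Levi form on $\mathcal{H}$ is, up to a factor, $\langle A_{\nu}X,X\rangle+\langle A_{\nu}C(X),C(X)\rangle$, so Levi-flatness kills the trace over $\mathcal{H}_{\gamma(0)}$); you have simply unpacked that citation.
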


In \cite{Tsu}, it was proved that if a K\"ahler manifold $(M^m, g)$ has positive lower bound $2\lambda$ on its holomorphic sectional curvature, then it must be compact with diameter bounded from above by $\frac{\pi}{\sqrt{2\lambda}}$. The following generalizes this slightly.

\begin{proposition}\label{prop-index} Let $(M^m, g)$ be a compact K\"ahler manifold with holomorphic sectional curvature bounded from below by $2\lambda>0$. Then for any geodesic $\gamma(\eta): [0, \ell]\to M$ with length $\ell>\frac{\pi}{\sqrt{2\lambda}}$, the index $i(\gamma)\ge 1$.
\end{proposition}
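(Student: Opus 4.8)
The plan is to produce a single piecewise-smooth vector field $X$ along $\gamma$ with $X(0)=X(\ell)=0$ on which the index form $I_\gamma$ (with fixed endpoints, so that the boundary terms in the formula for $I$ drop out) is negative. Since by definition $i(\gamma)$ is the dimension of a maximal subspace of such vector fields on which $I_\gamma$ is negative definite, exhibiting one negative direction already gives $i(\gamma)\ge 1$.

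The genuinely K\"ahler ingredient is the choice of this vector field. Since $\gamma$ is a geodesic, $\gamma'(\eta)$ is parallel along $\gamma$, and since the complex structure $C$ is parallel (K\"ahlerity), $E(\eta):=C(\gamma'(\eta))$ is a parallel unit field along $\gamma$ orthogonal to $\gamma'$. For a scalar function $f$ on $[0,\ell]$ with $f(0)=f(\ell)=0$, set $X(\eta)=f(\eta)E(\eta)$; then $\nabla_{\gamma'}X=f'E$ and
$$
I_\gamma(X,X)=\int_0^\ell\left((f')^2-f^2\,\langle R(E,\gamma')\gamma',E\rangle\right)\,d\eta .
$$
The point is that the plane spanned by $\gamma'$ and $E=C(\gamma')$ is a complex line, so its sectional curvature is exactly the holomorphic sectional curvature: $\langle R(E,\gamma')\gamma',E\rangle=R(\gamma',C(\gamma'),C(\gamma'),\gamma')=H(\gamma')\ge 2\lambda$ by hypothesis. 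Hence $I_\gamma(X,X)\le\int_0^\ell\left((f')^2-2\lambda f^2\right)d\eta$. Taking $f(\eta)=\sin(\pi\eta/\ell)$ gives $\int_0^\ell(f')^2\,d\eta=\pi^2/(2\ell)$ and $\int_0^\ell f^2\,d\eta=\ell/2$, so $I_\gamma(X,X)\le \frac{1}{2\ell}\left(\pi^2-2\lambda\ell^2\right)$, which is strictly negative precisely when $\ell>\pi/\sqrt{2\lambda}$. This proves $i(\gamma)\ge 1$. (Using instead $f_k(\eta)=\sin(k\pi\eta/\ell)$ on suitable subintervals, the same computation shows $i(\gamma)$ grows linearly in $\ell$, so via the Morse index theorem one recovers and slightly refines Tsukamoto's diameter estimate $\le\pi/\sqrt{2\lambda}$.)

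I do not expect a real obstacle. The only points needing care are bookkeeping of the sign and normalization conventions for $R$ (so that $\langle R(E,\gamma')\gamma',E\rangle$ is indeed the sectional curvature of the $\gamma'$–$C(\gamma')$ plane, equal to $H(\gamma')$, with the $-\langle R(X,\gamma')\gamma',X\rangle$ sign in the index form as recorded in Section 2), and the observation that compactness of $M$ is not actually used: the argument is entirely local along the geodesic segment $\gamma|_{[0,\ell]}$ and uses only the pointwise lower bound on $H$. The ``morally hard'' part—which is really just the right idea rather than a technical difficulty—is realizing that one must take the parallel normal field $C(\gamma')$ rather than a generic parallel normal field, since only the complex line $\gamma'\wedge C(\gamma')$ carries a curvature lower bound under the hypothesis $H\ge 2\lambda$.
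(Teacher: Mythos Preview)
Your proof is correct and essentially identical to the paper's: both take the parallel unit field $E=C(\gamma')$ along $\gamma$, set $X(\eta)=\sin(\pi\eta/\ell)\,E(\eta)$, and compute $I_\gamma(X,X)\le (\pi/\ell)^2\cdot \ell/2 - 2\lambda\cdot \ell/2<0$ when $\ell>\pi/\sqrt{2\lambda}$. Your additional remarks (that compactness is not used, and the refinement via $f_k$) are correct side observations not present in the paper's proof.
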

\begin{proof} Let $e_{n-1}(\eta)=C(\gamma'(s))$. Let $X(\eta)=\sin\left(\frac{\pi}{\ell}\eta\right)e_{n-1}(\eta)$.
Then
\begin{eqnarray*}
I(X, X)&=&\int_0^\ell \left(\frac{\pi}{\ell}\right)^2 \cos^2\left(\frac{\pi}{\ell}\eta\right)-\sin^2 \left(\frac{\pi}{\ell}\eta\right)\langle R_{C(\gamma'), \gamma'}\gamma', C(\gamma')\rangle\\
&\le &\left(\frac{\pi}{\ell}\right)^2\int_0^\ell \cos^2\left(\frac{\pi}{\ell}\eta\right)-2\lambda \int_0^\ell \sin^2 \left(\frac{\pi}{\ell}\eta\right)<0.
\end{eqnarray*}
This proves the claim.
\end{proof}
Moreover it was also proved in \cite{Tsu} that $M$ must be simply-connected.  The following is a generalization on the simply-connectedness.

\begin{proposition}\label{prop-tran} Let $(M^m, g)$ be a compact K\"ahler manifold with positive holomorphic sectional curvature. Then any holomorphic isometry of $M$ must have at least one fixed point.
\end{proposition}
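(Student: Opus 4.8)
The plan is to run a Synge--Weinstein type fixed-point argument, adapting it to the K\"ahler setting by using the parallel field $C(\gamma')$ where the classical proof uses a parallel field produced from an orientation/eigenvalue consideration. Suppose, for contradiction, that a holomorphic isometry $f\colon M\to M$ has no fixed point. Since $M$ is compact, the displacement function $\delta(x)=d(x,f(x))$ is continuous and attains a minimum $\ell>0$ at some point $p$; let $\gamma\colon[0,\ell]\to M$ be a unit-speed minimizing geodesic from $p$ to $f(p)$, and set $V(\eta)=C(\gamma'(\eta))$.

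First I would establish the ``no corner'' fact $df_p(\gamma'(0))=\gamma'(\ell)$, i.e. $f$ maps $\gamma$ onto its own geodesic prolongation. Let $m=\gamma(\ell/2)$ and concatenate $\gamma|_{[\ell/2,\ell]}$ (from $m$ to $f(p)$) with $f\circ\gamma|_{[0,\ell/2]}$ (from $f(p)$ to $f(m)$): this is a unit-speed broken path from $m$ to $f(m)$ of length $\ell$, while $d(m,f(m))=\delta(m)\ge\delta(p)=\ell$; hence the broken path is length-minimizing, so it is a smooth geodesic with no corner at $f(p)$, which means precisely that the incoming velocity $\gamma'(\ell)$ equals the outgoing velocity $df_p(\gamma'(0))$.

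Next I would construct the competing variation. The field $V$ is a unit field, parallel along $\gamma$ (as $C$ is parallel and $\gamma$ is a geodesic), and orthogonal to $\gamma'$; since $f$ is holomorphic, $df_p\circ C=C\circ df_p$, so the no-corner fact gives $df_p(V(0))=C(\gamma'(\ell))=V(\ell)$. Put $\gamma_s(\eta)=\exp_{\gamma(\eta)}(sV(\eta))$. Because $f$ is an isometry, $f(\exp_p(sV(0)))=\exp_{f(p)}(s\,df_p(V(0)))=\exp_{f(p)}(sV(\ell))=\gamma_s(\ell)$, so $\gamma_s(\ell)=f(\gamma_s(0))$; moreover the two endpoint curves $s\mapsto\gamma_s(0)=\exp_p(sV(0))$ and $s\mapsto\gamma_s(\ell)=f(\exp_p(sV(0)))$ are geodesics. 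Hence, using Cauchy--Schwarz and $L(\gamma_s)\ge d(\gamma_s(0),\gamma_s(\ell))$ (with $L$ denoting length), and writing $E(s)$ for the energy of $\gamma_s$, $E(s)\ge\tfrac1\ell L(\gamma_s)^2\ge\tfrac1\ell\,\delta(\gamma_s(0))^2\ge\ell=E(0)$, so $s=0$ minimizes $E$ and $E''(0)\ge 0$.

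Finally I would compute this second variation and hit a contradiction. The variation field of $\gamma_s$ along $\gamma$ is exactly $V$, which is normal to $\gamma'$; by the second variation formula (the shape-operator boundary terms vanish because both endpoint curves are geodesics) $E''(0)=\int_0^\ell\big(\|\nabla_{\gamma'}V\|^2-\langle R_{V,\gamma'}\gamma',V\rangle\big)\,d\eta$, and since $V$ is parallel this equals $-\int_0^\ell\langle R_{C\gamma',\gamma'}\gamma',C\gamma'\rangle\,d\eta=-\int_0^\ell H(\gamma'(\eta))\,d\eta$, exactly as in Proposition~\ref{prop-index}. As $H>0$ this is strictly negative, contradicting $E''(0)\ge 0$; hence $f$ has a fixed point. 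I do not anticipate a real obstacle here: the delicate points are the ``no corner'' lemma, which rests on $p$ being a \emph{global} minimizer of $\delta$, and checking that $\gamma_s$ preserves $\gamma_s(\ell)=f(\gamma_s(0))$ with geodesic endpoint curves so that the boundary terms drop. Note that holomorphy of $f$ is used essentially (for $df_p(V(0))=V(\ell)$), and that positivity of the \emph{holomorphic} sectional curvature alone suffices precisely because $C\gamma'\wedge\gamma'$ spans a complex line --- the K\"ahler substitute for the dimension/orientation hypothesis in Weinstein's fixed-point theorem.
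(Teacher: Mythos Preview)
Your argument is correct and follows essentially the same Synge--Weinstein scheme as the paper: minimize the displacement function, establish the no-corner identity $df_p(\gamma'(0))=\gamma'(\ell)$, use the parallel field $C(\gamma')$ as variation field, and derive $E''(0)<0$ from $H>0$. Your proof is in fact slightly more explicit than the paper's (you spell out why the boundary terms vanish and why $E(0)$ is a minimum via Cauchy--Schwarz), and your midpoint version of the no-corner argument is a harmless variant of the paper's ``any $\gamma(\eta)$'' version.
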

\begin{proof} Assume that there exists such a map $\phi: M\to M$ with no fixed point. Then there exists $p$ such that $d(p, \phi(p))=\min_{q\in M} d(q, \phi(q))$. Let $\gamma$ be the minimal geodesic joining $p$ to $\phi(p)$ with $\ell$ being the length. First observe that $d\phi(\gamma'(0))=\gamma'(\ell)$. This follows from the triangle inequality and the estimate:
$$
d(\gamma(\eta), \phi(\gamma(\eta)))\le d(\gamma(\eta), \phi(p))+d(\phi(p), \phi(\gamma(\eta)))=d(\gamma(\eta), \phi(p))+d(p, \gamma(\eta))=d(p, \phi(p)).
$$
Now let $e_n=\gamma'(\eta)$. Let $e_{n-1}=C(e_n)$. Clearly $e_{n-1}(\eta)$ is parallel. On the other hand, $e_{n-1}(0)=C(\gamma'(0))$, $e_{n-1}(\ell)=C(\gamma'(\ell))=C(d\phi(\gamma'(0)))=d\phi(e_{n-1})$. This shows that if $\beta(s)$ is a geodesic starting from $p$ with $\beta'(0)=e_{n-1}$, $\tilde{\beta}(s)=\phi(\beta(s))$ will be a geodesic starting from $\gamma(\ell)$ with $\tilde{\beta}'(0)=e_{n-1}(\ell)$. Consider the  variation
$\gamma(\eta, s)=\exp_{\gamma(\eta)}(se_{n-1}(\eta))$. The second variation formula on the energy $\mathcal{E}(s)=\frac{1}{2}\int_0^\ell |\frac{\partial \gamma}{\partial \eta}|^2$ gives that
$$
\frac{d^2}{ds^2} \mathcal{E}(0)=-\int_0^\ell \langle R_{e_{n-1}, \gamma'}\gamma', e_{n-1}\rangle <0.
$$
This contradicts to  that $\gamma_0(\eta)=\gamma(\eta, 0)$ is length minimizing (hence also energy minimizing) among all $\gamma_s(\eta)=\gamma(\eta, s)$, which joins $\beta(s)$ to $\tilde{\beta}(s)=\phi(\beta(s))$. \end{proof}

Regarding to the diameter estimate we have the following result under the assumption of the orthogonal Ricci lower bound, whose proof is the same as that of Myers' theorem.

\begin{theorem}\label{thm-diameter}
Let $(M^m, g)$ be a K\"ahler manifold with $Ric^{\perp}(X, X)\ge (m-1) \lambda |X|^2$ with $\lambda>0$. Then $M$ is compact with diameter bounded from the above by $\sqrt{\frac{2}{\lambda}}\cdot \pi$. Moreover, for any geodesic $\gamma(\eta): [0, \ell]\to M$ with length $\ell>\sqrt{\frac{2}{\lambda}}\cdot \pi$, the index $i(\gamma)\ge 1$.
\end{theorem}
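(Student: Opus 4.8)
The plan is to run the classical Bonnet--Myers second variation argument, but restricted to the $2m-2$ parallel directions orthogonal to both $\gamma'$ and $C(\gamma')$, so that the curvature term that appears is precisely $Ric^{\perp}(\gamma',\gamma')$ rather than the full $Ric(\gamma',\gamma')$. First I would fix a unit-speed geodesic $\gamma:[0,\ell]\to M$ with $\ell>\pi\sqrt{2/\lambda}$, set $e_n(\eta)=\gamma'(\eta)$ and $e_{n-1}(\eta)=C(\gamma'(\eta))$, and note that since $C$ is parallel and $\gamma'$ is parallel, $e_{n-1}$ is parallel and the rank-$(2m-2)$ subbundle $\{e_{n-1},e_n\}^{\perp}$ along $\gamma$ is a parallel subbundle; I would then choose a parallel orthonormal frame $e_1(\eta),\dots,e_{2m-2}(\eta)$ of it.

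Next I would take the test fields $X_i(\eta)=\sin\!\big(\tfrac{\pi\eta}{\ell}\big)\,e_i(\eta)$ for $1\le i\le 2m-2$, each vanishing at $\eta=0$ and $\eta=\ell$. Since each $e_i$ is parallel, $\nabla_\eta X_i=\tfrac{\pi}{\ell}\cos\!\big(\tfrac{\pi\eta}{\ell}\big)e_i$, and, the endpoints being points so that no boundary terms occur, the index form sums to
$$\sum_{i=1}^{2m-2} I(X_i,X_i)=\int_0^\ell\left[(2m-2)\Big(\frac{\pi}{\ell}\Big)^2\cos^2\Big(\frac{\pi\eta}{\ell}\Big)-\sin^2\Big(\frac{\pi\eta}{\ell}\Big)\,Ric^{\perp}(\gamma',\gamma')\right]d\eta,$$
because $\sum_{i=1}^{2m-2}\langle R_{e_i,\gamma'}\gamma',e_i\rangle=Ric^{\perp}(\gamma',\gamma')$ by the very definition of $Ric^{\perp}$. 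Using the hypothesis $Ric^{\perp}(\gamma',\gamma')\ge(m-1)\lambda$ together with $\int_0^\ell\cos^2\!\big(\tfrac{\pi\eta}{\ell}\big)\,d\eta=\int_0^\ell\sin^2\!\big(\tfrac{\pi\eta}{\ell}\big)\,d\eta=\tfrac{\ell}{2}$, the right side is at most $\tfrac{(m-1)\ell}{2}\big(\tfrac{2\pi^2}{\ell^2}-\lambda\big)$, which is strictly negative exactly when $\ell^2>2\pi^2/\lambda$. Hence at least one $I(X_i,X_i)<0$, so the index form is negative on a nonzero field vanishing at both ends, i.e.\ $i(\gamma)\ge 1$.

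For the diameter statement I would invoke completeness (implicit here, as in the Bonnet--Myers remark preceding the theorem): if $M$ contained two points at distance $>\pi\sqrt{2/\lambda}$, a minimizing geodesic joining them would have length $>\pi\sqrt{2/\lambda}$ and thus index $\ge 1$, contradicting the fact that minimizing geodesics have vanishing index. Therefore $\operatorname{diam}(M)\le\pi\sqrt{2/\lambda}$, and by Hopf--Rinow $M$ is compact.

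I do not expect a genuine obstacle here; the only point needing care is the bookkeeping that makes the lower bound $(m-1)\lambda$ for $Ric^{\perp}$ over the $2m-2$ admissible directions play the role of a Ricci lower bound $(2m-2)\cdot\frac{\lambda}{2}$ over $2m-2$ directions in ordinary Myers, which is what yields the constant $\pi\sqrt{2/\lambda}$ instead of $\pi/\sqrt{\lambda}$. In particular one must resist using the extra direction $C(\gamma')$, since the hypothesis gives no control on the holomorphic sectional curvature $R(\gamma',C\gamma',C\gamma',\gamma')$.
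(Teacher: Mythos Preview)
Your proof is correct and is precisely the argument the paper has in mind: the text simply states that ``the proof is the same as that of Myers' theorem'' and gives no further details, and you have written out exactly that adaptation --- summing the index forms over $2m-2$ parallel test fields orthogonal to $\gamma'$ and $C(\gamma')$ so that $Ric^{\perp}(\gamma',\gamma')$ replaces $Ric(\gamma',\gamma')$, mirroring the single-vector argument used in Proposition~\ref{prop-index}. Your remark that one must exclude the $C(\gamma')$ direction (since there is no control on $H$) is exactly the point, and your arithmetic producing the constant $\pi\sqrt{2/\lambda}$ is correct.
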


Note that this estimate is not sharp for Fubini-Study metrics. It is an interesting question {\it whether or not a compact K\"ahler manifold with positive orthogonal Ricci curvature is simply-connected.} The case for Ricci curvature is a theorem of S.                Kobayashi \cite{Ko}. The following result provides a generalization of a result of Tam and Yu \cite{TamYu}.

\begin{corollary}Assume that $(M^m, g)$ satisfies that
$Ric^{\perp}(X, X)\ge (m-1) \lambda |X|^2$ and $H(X)\ge 2\lambda |X|^4$ with $\lambda>0$. Assume that there exists a complex hypersurface $P$ and a point $Q\in M$ such that $d(P, Q)=\frac{\pi}{\sqrt{2\lambda}}$. Then $(M^m, g)$ is holomorphic-isometric to a complex projective space with Fubini-Study metric.
\end{corollary}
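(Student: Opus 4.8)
The plan is to show the hypothesis forces equality throughout our comparison estimates and then invoke the rigidity clause of Corollary~\ref{coro-wang}. Set $D=\frac{\pi}{\sqrt{2\lambda}}$, let $\rho(x)=d(x,P)$, $\sigma(x)=d(x,Q)$, and fix a minimizing geodesic $\gamma\colon[0,D]\to M$ from $P$ to $Q$; since $\gamma$ realizes $d(P,Q)$ it meets $P$ orthogonally, so $\rho(\gamma(t))=t$ on $[0,D]$, and then the triangle inequality forces $\sigma(\gamma(t))=D-t$ there. The two comparison bounds I need are: first, $\Delta\rho\le a(\rho):=(m-1)\tan_{\frac{\lambda}{2}}(\rho)+\tfrac12\cot_{2\lambda}(\rho)$, obtained by adding Theorem~\ref{thm-com11} (the $\Delta^{\perp}$ comparison, using $Ric^{\perp}\ge(m-1)\lambda$) to the holomorphic Hessian comparison of Theorem~\ref{thm-com1}(ii) for a complex hypersurface (using $H\ge 2\lambda$); and second, $\Delta\sigma\le b(\sigma):=(m-1)\cot_{\frac{\lambda}{2}}(\sigma)+\tfrac12\cot_{2\lambda}(\sigma)$, from Theorem~\ref{thm-com1}(i) together with Theorem~\ref{thm-com1}(ii) with $\tilde{P}$ a point. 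Here $a$ and $b$ are exactly the Laplacians of the distance functions to a totally geodesic complex hyperplane and to a point in the complex space form of holomorphic sectional curvature $2\lambda$, and since there these two distance functions sum to the diameter $D$, one has the elementary identities $a(t)+b(D-t)=0$ on $(0,D)$ and $b$ non-increasing on $(0,D)$.

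Next I would run a maximum principle for $h:=\rho+\sigma$. The triangle inequality gives $h\ge D$, while the computation along $\gamma$ gives $h\equiv D$ there, so $h$ attains its minimum $D$ at interior points of the connected open set $M\setminus(P\cup\{Q\})$. On that set $\sigma\ge D-\rho$, so monotonicity of $b$ and the model identity give, in the barrier (Calabi) sense, $\Delta h\le a(\rho)+b(\sigma)\le a(\rho)+b(D-\rho)=0$. The strong maximum principle then yields $h\equiv D$ on $M$. Since each of the two estimates summing to $\Delta\sigma\le b(\sigma)$ is separately an inequality, equality now holds everywhere in $\Delta\sigma=b(\sigma)$ (and likewise $\Delta\rho=a(\rho)$), and $\sigma=D-\rho$ throughout.

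Now $h\equiv D$ has the usual structural consequences: every $x\notin P\cup\{Q\}$ lies in the interior of a minimizing normal geodesic from $P$ to $Q$ (concatenate the minimizing segments from $P$ to $x$ and from $x$ to $Q$; the total length is $\rho(x)+\sigma(x)=D=d(P,Q)$, so there is no corner). Hence the cut locus of $Q$ is exactly $P$, every unit-speed geodesic from $Q$ first meets its cut locus at time exactly $D$ with no conjugate point before then, and $\exp_Q$ is a diffeomorphism of $\{v\in T_QM:|v|<D\}$ onto $M\setminus P$. The pointwise equality $\Delta\sigma=b(\sigma)$ — equality in the Riemannian Laplacian comparison — integrates along radial geodesics to show that the volume density of geodesic spheres about $Q$ equals that of the model; consequently $\mathrm{Vol}(B(Q,R))/\mathrm{Vol}(B(Q,r))$ equals the corresponding ratio in the complex space form for all $0<r\le R$. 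This is the equality case of Corollary~\ref{coro-wang}, whose rigidity clause — applied with $R>\mathrm{diam}(M)$, so that $B(Q,R)=M$ — gives that $M$ is holomorphic-isometric to ${\mathbb P}^m$ with a constant multiple of the Fubini--Study metric.

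The real work, and the main obstacle, is the careful bookkeeping that packages the three comparison theorems into the single pair $a,b$ and the verification of the identity $a(t)+b(D-t)=0$, together with the handling of cut/focal loci in the maximum-principle step and the identification of the cut locus of $Q$ with $P$. The holomorphic part of the conclusion is inherited from Corollary~\ref{coro-wang}; if one prefers a self-contained argument, equality in $\Delta\sigma=b(\sigma)$ plus the forced identity $Ric(\gamma',\gamma')=H(\gamma')+Ric^{\perp}(\gamma',\gamma')=(m+1)\lambda$, fed into the Riccati equation for $\nabla^2\sigma$ along with a Cauchy--Schwarz estimate for $\operatorname{tr}\big((\nabla^2\sigma)^2\big)$, pins down $\nabla^2\sigma$ (and hence, via the structure equation for the radial metric, all of $\exp_Q^{*}g$) to be the model one, so that $M$ is Riemannian-isometric to $({\mathbb P}^m,g_{FS})$; its parallel orthogonal complex structure is then necessarily $\pm$ the Fubini--Study one, which upgrades this to a holomorphic isometry.
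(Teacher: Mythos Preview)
Your argument is sound but follows a genuinely different route from the paper. The paper uses a \emph{volume pigeonhole} argument: after normalizing to $\lambda=2$, the comparison theorems give that the ratio of the area element of the level sets $\{\rho=t\}$ (resp.\ $\{\sigma=t\}$) to the corresponding model quantity is monotone nonincreasing. Since $d(P,Q)=D$, the tubes $B(P,\ell)$ and $B(Q,D-\ell)$ are disjoint for every $\ell\in(0,D)$, so
\[
1\ \ge\ \frac{\mathrm{Vol}(B(P,\ell))}{\mathrm{Vol}(M)}+\frac{\mathrm{Vol}(B(Q,D-\ell))}{\mathrm{Vol}(M)}
\ \ge\ \frac{\mathrm{Vol}(\tilde B(\tilde P,\ell))+\mathrm{Vol}(\tilde B(\tilde Q,D-\ell))}{\mathrm{Vol}(\mathbb{CP}^m)}\ =\ 1,
\]
where the middle inequality is Bishop--Gromov monotonicity (using that $M=B(Q,D)=B(P,D)$ by Tsukamoto's diameter bound) and the last equality holds because in $\mathbb{CP}^m$ these two tubes exhaust the space. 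Equality throughout then invokes the rigidity in the area/volume comparison.

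Your approach instead runs the strong maximum principle on $h=\rho+\sigma$, packaging the two Laplacian comparisons into $\Delta h\le a(\rho)+b(\sigma)\le 0$ via the model identity $a(t)+b(D-t)=0$ and the monotonicity of $b$. This is the Calabi/Cheeger--Gromoll style alternative to the volume argument, and it is correct here; the points you flag (barrier sense at cut/focal loci, connectedness of $M\setminus(P\cup\{Q\})$, which holds since both pieces have real codimension $\ge 2$) are the right technicalities to verify. What the paper's route buys is brevity and that it never needs to manipulate the explicit functions $a,b$ or verify $a(t)+b(D-t)=0$: the cancellation is encoded in the single line $\tilde B(\tilde P,\ell)\cup\tilde B(\tilde Q,D-\ell)=\mathbb{CP}^m$. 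What your route buys is a pointwise conclusion ($h\equiv D$ and $\Delta\sigma\equiv b(\sigma)$) that leads more directly into the equality clause of Corollary~\ref{coro-wang} and makes the geodesic structure ($\mathrm{Cut}(Q)=P$) explicit.
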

\begin{proof} Without the loss of generality we let $\lambda=2$. Under the assumption, it is known that $d(P, Q)\le \frac{\pi}{2}$.  The assumption and the comparison theorems proved above implies that the area element with respect to level circle of a complex hypersurface over the area element of the level circle  of $\mathbb{CP}^{m-1}\subset \mathbb{CP}^m$, and the area element with respect to the level spheres (to a point) over that of sphere in $\mathbb{CP}^m$ are all monotone decreasing. This shows that for any $\ell \in (0, \frac{\pi}{2})$, $B(P, \ell)\cap B(Q, \frac{\pi}{2}-\ell)=\emptyset$ and
\begin{eqnarray*}
1 &\ge& \frac{Vol (B(P,\ell))}{Vol (M)}+\frac{Vol(B(Q, \frac{\pi}{2} -\ell))}{Vol(M)}\\
&\ge& \frac{1}{Vol (\mathbb{CP}^m)}\left(\int_{\mathbb{CP}^{m-1}}\int_0^\ell 2\pi \cos ^{2m-1} t \cdot \sin t\, dt+
 \int_{\mathbb{S}^{2m-1}}\int_0^{\frac{\pi}{2}-\ell} \sin^{2m-1} t \cdot \cos t\, dt\right)\\
 &=&1.
\end{eqnarray*}
The claimed rigidity follows from the equality case in the volume/area comparison as classical case.\end{proof}

\section{Proof of the vanishing theorem}

In this section we shall prove Theorem \ref{thm-1connect}.
In a recent paper \cite{XYang}, X. Yang proved that any compact K\"ahler manifold $M^m$ with positive holomorphic sectional curvature $H$  satisfies $h^{p,0}=0$ for all $1\leq p\leq m$, using the form version of the Bochner identity. By employing this method we prove that, under the $Ric^{\perp } >0$ condition,  $h^{m-1,0}=h^{2,0}=0$.

Let $s$ be a global holomorphic $p$-form on $M^m$. The Bochner identity (cf. Ch III, Proposition 1.5 of \cite{Ko2}, as well as Porposition 2.1 of \cite{Ni-JDG}) gives
$$  \partial \overline{\partial } |s|^2 = \langle \nabla s , \overline{\nabla s} \rangle  - \widetilde{R}(s, \overline{s}, \cdot , \cdot ) $$
where $\widetilde{R}$ stands for the curvature of the Hermitian bundle $\bigwedge^p\Omega$, and $\Omega=(T'M)^*$ is the holomorphic cotangent bundle of $M$. The metric on $\bigwedge^p\Omega$ is derived from the metric of $M^m$.

It is  useful to note that $\tilde{R}$ acts on $(p, 0)$ forms as special case of  the curvature action on tensors. Precisely we have the following formula for any holomorphic $(p, 0)$-form $s$ and any given tangent direction $v$ at the point $x_0$, namely, there will be local frame $\{dz_i\}$ which is unitary at a point $x_0$, such that
\begin{equation}\label{eq:40}
\langle \sqrt{-1}\partial\bar{\partial} |s|^2, \frac{1}{\sqrt{-1}}v\wedge \bar{v}\rangle =\langle \nabla_v s, \bar{\nabla}_{\bar{v}} \bar{s}\rangle +\frac{1}{p!}\sum_{I_p} \sum_{k=1}^p R_{v\bar{v}i_k \bar{i}_k}|a_{I_p}|^2,
\end{equation}
where $s=\frac{1}{p!}\sum_{I_p} a_{I_p}dz^{i_1}\wedge \cdots \wedge dz^{i_p}$ and $I_p=(i_1, \cdots, i_p)$. The $\langle\cdot , \cdot\rangle$ in the left hand side is the scalar product between the $(1, 1)$-forms and their dual, instead of bilinear extension of the Hermitian product. If $M$ admits metric of positive holomorphic section curvature, the second variation argument as in the proof of Corollary \ref{Gold} implies that
$R_{v\bar{v}i_k \bar{i}_k}>0$ for $v$, a unit vector which attains the minimum of the holomorphic sectional curvature among all unit vector $w\in T_{x_0}'M$ at the given $x_0$. This is the argument of \cite{XYang} proving the vanishing of $h^{p, 0}$ under the positivity of the holomorphic sectional curvature.

Now we adapt this to prove Theorem \ref{thm-1connect}.
If $s$ is not identically zero, then $|s|^2$ will attain its nonzero  maximum somewhere, say $x_0$,  and at this point we have
\begin{equation*}
 \widetilde{R}(s, \overline{s},v, \overline{v}) \geq 0
 \end{equation*}
for any type $(1,0)$ tangent vector $v\in T_{x_0}M$. We want to show that this will contradict the assumption $Ric^{\perp} >0$ when either $p=m-1$ or  $p=2$.

The $p=m-1$ case is easy. In a small neighborhood of $x_0$, we can write $s= f \varphi_2\wedge \cdots \wedge \varphi_m$, where $f\neq 0$ is a function and $\{ \varphi_1, \varphi_2, \ldots , \varphi_m\} $ are local $(1,0)$-forms forming a coframe dual to a local  tangent frame $\{E_1, \ldots , E_m\}$, which is unitary at $x_0$. Since
$$ \widetilde{R} (s, \overline{s},v, \overline{v}) = - |f|^2 \sum_{i=2}^m R_{i\overline{i}v\overline{v}} \geq 0  $$
for any tangent direction $v$, where $R$ is the curvature tensor of $M$. If we take $v=E_1$, we would get $Ric^{\perp} (E_1, \overline{E}_1) \leq 0$, a contradiction.

Now consider  the $p=2$ case. Suppose that $s$ is a non-trivial global holomorphic $2$-form on $M^m$. Let $r\geq 1$ be the largest positive integer such that the wedge product $s^r$ is not identically zero. Since we already have $h^{m,0}=h^{m-1,0}=0$, we know that $2r\leq m-2$.

We will apply the $\partial\bar{\partial}$-Bochner formula to the $2r$-form $\sigma = s^r$. Let $x_0$ be a maximum point of $|\sigma |^2$. At $x_0$, let us write  $s=\sum_{i,j} f_{ij}\varphi_i\wedge \varphi_j$ under any unitary coframe $\{\varphi_j\}$ which is dual to a local unitary tangent frame $\{E_j\}$. The $m\times m$ matrix  $A= (f_{ij})$ is  skew-symmetric. As is well-known (cf. \cite{Hua}), there exists unitary matrix $U$ such that $\ ^t\!U AU$ is in the block diagonal form where each non-zero diagonal block is a constant multiple of $E$, where
$$ E=\left[ \begin{array}{cc} 0 & 1 \\ -1 & 0 \end{array} \right]. $$
In other words, we can choose a unitary coframe $\varphi$ at $x_0$ such that
$$ s = \lambda_1 \varphi_1\wedge \varphi_2 + \lambda_2 \varphi_3 \wedge \varphi_4 + \cdots + \lambda_k \varphi_{2k-1}\wedge \varphi_{2k}, $$
where $k$ is a positive integer and each $\lambda_i\neq 0$. Clearly, $k\leq r$ since $s^k\neq 0$ at $x_0$. If $k<r$, then $\sigma = s^r =0$ at $x_0$, which is a maximum point for $|\sigma|^2$, implying $\sigma \equiv 0$, a contradiction. So we must have $k=r$. Thus $\sigma = \lambda \varphi_1\wedge \cdots \wedge \varphi_{2r}$, where $\lambda = \lambda_1 \cdots \lambda_k\neq 0$. From the Bochner formula, we get that
\begin{equation}\label{eq:41} \sum_{i=1}^{2r} R_{i\overline{i}v\overline{v}} \leq 0
\end{equation}
for any tangent direction $v$ of type $(1,0)$ at $x_0$. From this we shall  derive a contradiction to our assumption that $Ric^{\perp}>0$.

Denote by $W\cong {\mathbb C}^{2r}$ the subspace in $T_{x_0}'M$ spanned by $E_1, \ldots , E_{2r}$. By letting $v\in W$, we see that the `Ricci' of the restriction $R|_W$ of the curvature tensor $R$ on $W$ is nonpositive, thus the `scalar' curvature of $R|_W$ is also nonpositive:
\begin{equation}\label{eq:42}
 S|_W = \sum_{i,j=1}^{2r} R_{i\overline{i}j\overline{j}} \leq 0.
 \end{equation}
On the other hand, for each $1\leq j\leq 2r$, $Ric^{\perp}(E_j, \overline{E}_j)>0$. By adding them up, we get
\begin{equation}\label{eq:43}
0< \sum_{j=1}^{2r} Ric^{\perp}(E_j, \overline{E}_j) = \sum_{1\leq i\neq j \leq 2r} R_{i\overline{i}j\overline{j}} + \sum_{j=1}^{2r} \sum_{\ell =2r+1}^{m} R_{j\overline{j}\ell \overline{\ell }}.
\end{equation}
By applying (\ref{eq:41}) to $v=E_{\ell}$ for each $\ell$, we know that the second term on the right hand side of (\ref{eq:43}) is nonpositive, therefore we get
\begin{equation}\label{eq:44}
 \sum_{1\leq i\neq j \leq 2r} R_{i\overline{i}j\overline{j}} = S|_W - \sum_{i=1}^{2r} H(E_i) > 0.
\end{equation}
Note that for any $P\in U(2r)$, if we replace $\{ E_1, \ldots , E_{2r} \}$ by $\{ \tilde{E}_1, \ldots , \tilde{E}_{2r} \}$ where $\tilde{E}_i = P_{ij} E_j$, then the above inequality still holds. Taking the average integral $\aint \ $ over $U(2r)$, and using Berger's lemma, we get
$$ 0< S|_W - 2r \aint H(PE_1)  = S|_W - 2r \frac{2}{2r(2r+1)} S|_W = \frac{2r-1}{2r+1} S|_W, $$
so $S|_W>0$, contradicting (4.3). This proves that $h^{2,0}=0$ for any compact K\"ahler manifold $M^m$ with $Ric^{\perp}>0$ everywhere, and we have completed the proof of Theorem \ref{thm-1connect}.

The proof in fact yields the following more general result, which is in the same spirit of the result in \cite{Ni-Zheng2}.

\begin{corollary}\label{coro:31} The vanishing of Hodge number $h^{2,0}(M)$ follows if $(M, g)$ is compact and for any unitary pair $\{E_i\}_{i=1, 2}$ with $E_1\perp E_2$
$$
Ric^{\perp}(E_1, \overline{E}_1)+Ric^{\perp}(E_2, \overline{E}_2)>0.
$$
In particular, $M$ is projective.
\end{corollary}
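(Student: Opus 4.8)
The plan is to rerun the proof of the $p=2$ case of Theorem~\ref{thm-1connect} verbatim, changing only the single place where the pointwise positivity of each individual $Ric^{\perp}(E_j,\overline{E}_j)$ was used. Arguing by contradiction, suppose $h^{2,0}(M)\neq 0$ and fix a non-trivial global holomorphic $2$-form $s$; let $r\ge 1$ be the largest integer with $\sigma:=s^r\not\equiv 0$, so $\sigma$ is a holomorphic $(2r,0)$-form with $2r$ even and $2\le 2r\le m$. Let $x_0$ be a maximum point of $|\sigma|^2$. Diagonalizing the skew-symmetric coefficient matrix of $s$ at $x_0$ into $2\times2$ blocks, and using that $x_0$ is a maximum of $|s^r|^2$ (so $s$ must have exactly $r$ non-zero blocks there, otherwise $\sigma(x_0)=0$ and $\sigma\equiv0$), one obtains a unitary coframe $\{\varphi_j\}$ at $x_0$, dual to a unitary tangent frame $\{E_j\}$, with $\sigma=\lambda\,\varphi_1\wedge\cdots\wedge\varphi_{2r}$, $\lambda\neq0$. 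Feeding this into the $\partial\overline{\partial}$-Bochner formula (\ref{eq:40}) at $x_0$ gives, exactly as in (\ref{eq:41}), that $\sum_{i=1}^{2r}R_{i\overline{i}v\overline{v}}\le 0$ for every $(1,0)$ vector $v$ at $x_0$; taking $v$ in the span $W$ of $E_1,\dots,E_{2r}$ forces $S|_W=\sum_{i,j=1}^{2r}R_{i\overline{i}j\overline{j}}\le 0$.

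The new input is how the hypothesis enters. Because $2r$ is even and at least $2$, I would group $\{E_1,\dots,E_{2r}\}$ into the $r$ orthonormal pairs $(E_1,E_2),(E_3,E_4),\dots,(E_{2r-1},E_{2r})$ and apply the assumption $Ric^{\perp}(E_{2j-1},\overline{E}_{2j-1})+Ric^{\perp}(E_{2j},\overline{E}_{2j})>0$ to each pair; summing the $r$ inequalities yields $\sum_{j=1}^{2r}Ric^{\perp}(E_j,\overline{E}_j)>0$, which is precisely the left-hand side of (\ref{eq:43}). From here the proof of Theorem~\ref{thm-1connect} goes through unchanged: expanding $Ric^{\perp}=Ric-H$ rewrites this sum as $\left(S|_W-\sum_{i=1}^{2r}H(E_i)\right)+\sum_{j=1}^{2r}\sum_{\ell=2r+1}^{m}R_{j\overline{j}\ell\overline{\ell}}$, the tail sum is nonpositive by the Bochner inequality applied to $v=E_\ell$, so $S|_W-\sum_{i=1}^{2r}H(E_i)>0$; averaging over $U(2r)$ and using Berger's lemma converts this to $\tfrac{2r-1}{2r+1}\,S|_W>0$, contradicting $S|_W\le 0$. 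One should note that the pairing, and hence the inequality $S|_W-\sum_i H(E_i)>0$, remains valid for every unitary frame of $W$, which is exactly what makes the $U(2r)$-averaging legitimate. This establishes $h^{2,0}(M)=0$; the Hodge decomposition then gives $H^2(M,\mathbb{R})=H^{1,1}(M)\cap H^2(M,\mathbb{R})$, so the K\"ahler class is approximable by rational $(1,1)$-classes, and Kodaira's embedding theorem makes $M$ projective.

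I do not expect a genuine obstacle: the whole point is that $2r$ is automatically even, so the averaged hypothesis can be fed in pair by pair just as effectively as the stronger pointwise one. The one spot that differs from the situation in Theorem~\ref{thm-1connect} and calls for a moment's care is that, under the weaker assumption here, one does not have the prior vanishings $h^{m-1,0}=h^{m,0}=0$ in hand to guarantee $2r\le m-2$; one must therefore also cover the degenerate ranges $2r=m-1$ and $2r=m$. These are harmless: when $2r=m$ the tail sum over $\ell>2r$ is empty, the Bochner inequality becomes $Ric(x_0)\le0$ and hence $S(x_0)\le0$, which contradicts the strict positivity coming from the pairing-plus-averaging exactly as before, and $2r=m-1$ is handled identically, the single extra direction $E_m$ contributing only a nonpositive term. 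So the argument closes in every case.
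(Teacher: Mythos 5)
Your proposal is correct and follows essentially the same route as the paper, which obtains the corollary by observing that the proof of the $p=2$ case of Theorem \ref{thm-1connect} only ever uses the positivity of $\sum_{j=1}^{2r}Ric^{\perp}(E_j,\overline{E}_j)$ for unitary frames of $W$, and this is available from the paired hypothesis precisely because $2r$ is even. Your extra attention to the cases $2r=m-1$ and $2r=m$ (which the paper's reduction to $2r\le m-2$ no longer supplies once the pointwise hypothesis is weakened) is a sensible refinement, but it does not alter the argument.
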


Modifying the argument also proves the following result which in fact is different from the above corollary since $Ric^\perp(Z, \overline{Z})$ does not come from a Hermitian symmetric sesquilinear form. Similar to \cite{Ni-Zheng2}, for any $k$-subspace $\Sigma\subset T_x'M$,  we define
$$
Ric^{\perp}_k(x, \Sigma)=\aint_{Z\in \Sigma, |Z|=1} Ric^\perp(Z, \overline{Z})\, d\theta(Z)
$$
where $\aint f (Z)\, d\theta(Z)$ denotes $\frac{1}{Vol(\mathbb{S}^{2k-1})}\int_{\mathbb{S}^{2k-1}}f(Z)\, d\theta(Z)$.
We say $Ric_k^{\perp}(x)>0$ if for any $k$-subspace $\Sigma\subset T_x'M$, $Ric^{\perp}_k(x, \Sigma)>0$.
\begin{theorem}
Let $(M, g)$ be a compact K\"ahler manifolds such that $Ric^{\perp}_2(x)>0$ for any $x\in M$. Then
$h^{2, 0}=0$. In particular, $M$ is projective.
\end{theorem}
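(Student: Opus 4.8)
The plan is to re-run the $p=2$ argument in the proof of Theorem~\ref{thm-1connect}, but with the pointwise hypothesis $Ric^{\perp}>0$ replaced by the integrated one $Ric^{\perp}_2(x)>0$. Suppose, for contradiction, that there is a non-trivial holomorphic $2$-form $s$ on $M$. Let $r\ge 1$ be the largest integer with $\sigma:=s^r\not\equiv 0$, and let $x_0$ be a maximum point of $|\sigma|^2$. As before, diagonalizing the skew-symmetric coefficient matrix of $s$ at $x_0$ into $2\times 2$ blocks, the number of blocks at $x_0$ is exactly $r$ (it is $\ge r$ since $\sigma(x_0)\ne 0$, and $\le r$ since $s^{r+1}\equiv 0$), so there is a unitary coframe $\{\varphi_j\}$ at $x_0$, dual to a unitary frame $\{E_j\}$, with $\sigma=\lambda\,\varphi_1\wedge\cdots\wedge\varphi_{2r}$, $\lambda\ne 0$. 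Put $W=\mathrm{span}_{\mathbb{C}}\{E_1,\dots,E_{2r}\}\subset T'_{x_0}M$.

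Applying the $\partial\bar\partial$-Bochner formula to $\sigma$ at $x_0$ gives, as in (\ref{eq:41}), that $\sum_{i=1}^{2r}R_{i\bar{i}v\bar{v}}\le 0$ for every type $(1,0)$ vector $v$ at $x_0$. Taking $v=E_j$ with $1\le j\le 2r$ and summing over $j$ gives $S|_W:=\sum_{i,j=1}^{2r}R_{i\bar{i}j\bar{j}}\le 0$, which is (\ref{eq:42}); taking $v=E_\ell$ with $\ell>2r$ and summing over $\ell$ gives the cross term $T:=\sum_{i=1}^{2r}\sum_{\ell=2r+1}^{m}R_{i\bar{i}\ell\bar{\ell}}\le 0$.

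Now I use the hypothesis: for every complex $2$-plane $\Sigma\subset W$ one has $Ric^{\perp}_2(x_0,\Sigma)>0$, so the average of $Ric^{\perp}_2(x_0,\cdot)$ over all such planes is positive. Using that the iterated average (first over $2$-planes of $W$, then over the unit sphere of each — both being $U(2r)$-invariant) equals the average over the unit sphere of $W$, one gets
$$ 0<\aint_{\Sigma\subset W}Ric^{\perp}_2(x_0,\Sigma)=\aint_{Z\in W,\,|Z|=1}Ric^{\perp}(Z,\overline{Z})\,d\theta(Z). $$
The average of the ambient Hermitian form $Z\mapsto Ric(Z,\overline{Z})$ over the unit sphere of $W$ equals $\frac{1}{2r}\sum_{a=1}^{2r}Ric(E_a,\overline{E}_a)=\frac{1}{2r}(S|_W+T)$, while Berger's lemma applied on $W\cong\mathbb{C}^{2r}$ (as in the proof of Theorem~\ref{thm-1connect}) gives $\aint_{Z\in W}H(Z)\,d\theta=\frac{2}{2r(2r+1)}S|_W$. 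Since $Ric^{\perp}(Z,\overline{Z})=Ric(Z,\overline{Z})-H(Z)$, subtraction yields
$$ 0<\frac{1}{2r}(S|_W+T)-\frac{2}{2r(2r+1)}S|_W=\frac{2r-1}{2r(2r+1)}\,S|_W+\frac{1}{2r}\,T. $$
Because $T\le 0$ and $\tfrac{2r-1}{2r(2r+1)}>0$ for all $r\ge 1$, this forces $S|_W>0$, contradicting $S|_W\le 0$. Hence $h^{2,0}=0$, and, as in Corollary~\ref{coro:31}, $M$ is projective.

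The only non-formal steps are the normalization bookkeeping in the last part — checking Berger's averaging identity on the subspace $W$ with this paper's conventions, and the Fubini-type reduction of the Grassmannian average of $Ric^{\perp}_2$ to a single sphere average — together with the (by now routine) verification that the block-diagonalization of $s$ at $x_0$ has exactly $r$ blocks; the constants in the last display are where I expect a slip to be most likely. I also note that $Ric^{\perp}_2>0$ already forces $S>0$ (average (\ref{eq:scalar-ricciperp}) over all $2$-planes), hence $h^{m,0}=0$; but, unlike in Theorem~\ref{thm-1connect}, that fact is not needed here, since the present argument treats the case $p=2$ directly.
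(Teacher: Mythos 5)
Your proposal is correct and follows essentially the same route as the paper: the paper likewise reduces to the sphere average of $Ric^{\perp}$ over $W=\operatorname{span}\{E_1,\dots,E_{2r}\}$ (phrased as $Ric^{\perp}_2>0\Rightarrow Ric^{\perp}_{2r}(x_0,W)>0$), combines the trace identity $\sum_{j\le 2r}Ric(E_j,\overline{E}_j)=S|_W+T$ with $T\le 0$ and Berger's lemma on $W$, and arrives at the same inequality $\frac{2r-1}{2r(2r+1)}S|_W\ge Ric^{\perp}_{2r}(x_0,W)>0$ contradicting $S|_W\le 0$. Your constants and the Fubini/Grassmannian averaging step all check out.
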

\begin{proof} First it is easy to see that $Ric_l^{\perp}(x)>0$ implies that $Ric_k^{\perp}(x)>0$ for any $k\ge l$. We observe that, if $\Sigma=\operatorname{span}\{E_1, E_2, \cdots, E_l\}$,
\begin{eqnarray*}
Ric^{\perp}_l(x, \Sigma)&=&\aint_{Z\in \Sigma, |Z|=1} Ric^\perp(Z, \overline{Z})\, d\theta(Z)=\aint_{Z\in \Sigma, |Z|=1} Ric(Z, \overline{Z})-H(Z)\, d\theta(Z)\\
&=&\aint \frac{1}{Vol(\mathbb{S}^{2m-1})}\int_{\mathbb{S}^{2m-1}} mR(Z, \overline{Z}, W, \overline{W})-H(Z)\, d\theta(W)\, d\theta(Z)\\
&=&\frac{1}{Vol(\mathbb{S}^{2m-1})}\int_{\mathbb{S}^{2m-1}}\left( \aint mR(Z, \overline{Z}, W, \overline{W})-H(Z)\, d\theta(Z)\right)d\theta(W)\\
&=& \frac{1}{l}\left(Ric(E_1, \overline{E}_1)+Ric(E_2, \overline{E}_2)+\cdots +Ric(E_l, \overline{E}_l) \right)-\frac{2}{l(l+1)}S_l(x, \Sigma)
\end{eqnarray*}
where $S_l(x, \Sigma)$ is the scalar curvature of $R$ restricted to $\Sigma$ (cf. \cite{Ni-Zheng2}).
Now we adapt the proof of Theorem \ref{thm-1connect} above, for $W=\operatorname{span}\{E_1, \cdots, E_{2r}\}$, the $\partial\bar{\partial}$-Bochner formula implies that $S_{2r}(x_0, W)\le 0$. On the other hand the above calculation and (\ref{eq:42}) implies that
$$
\frac{2r-1}{2r(2r+1)}S_{2r}(x_0, W)\ge Ric^{\perp}_{2r}(x_0,W)>0.
$$
The contradiction implies the theorem.
\end{proof}

Note that it is well known that (cf. \cite{Ko2}, Theorem 3.4 of Ch. 3) if $$Ric_k(x)=\min{\{E_i\}} \left(Ric(E_1, \overline{E}_1)+Ric(E_2, \overline{E}_2)+\cdots +Ric(E_k, \overline{E}_k) \right)>0$$ everywhere $h^{p, 0}=0$ for any $p\ge k$. It was recently  proved in \cite{Ni-Zheng2} that the same result holds if $S_k>0$. Given the above relation between $Ric^\perp_k(x)$, $Ric_k(x)$, and $S_k(x)$, it is natural to  conjecture that $h^{p, 0}=0$ if $Ric^\perp_k(x)>0$. Clearly an affirmative answer to this question  would imply the main conjecture in the introduction.

To prove Proposition \ref{bochner}, observe that for any holomorphic vector field $s$ the $\partial\bar{\partial}$-Bochner formula can be applied to obtain that
$$
\langle \sqrt{-1}\partial\bar{\partial} |s|^2, \frac{1}{\sqrt{-1}}v\wedge \bar{w}\rangle \ = \ \langle \nabla_v s, \bar{\nabla}_{\bar{w}} \bar{s}\rangle- R_{v\bar{w}s\bar{s}}.
$$
If $s$ is nonzero, as before at the point $x_0$, where $|s|^2$ attains its maximum we have that
$$
R_{v\bar{v} s\bar{s}}\ge 0
$$
for any direction $v$. Summing over an unitary basis of $\{s\}^{\perp}$ we have a contradiction with $Ric^{\perp}<0$.

\vspace{0.2in}

 Next we examine the correlation between the positivity of the three curvatures: $Ric$, $Ric^{\perp}$, and $H$. First of all, we observe that the positivity of two of them does not imply that of the third one, except the obvious case caused by $Ric = Ric^{\perp }+H$.

$ \bullet$ Examples with $Ric>0$, $H>0$ but $Ric^{\perp}\ngeq 0$.

To see such an example, let us consider the surface $M^2={\mathbb P}^2 \# \overline{{\mathbb P}^2}$, the blowing up of ${\mathbb P}^2$ at one point. We have
$$ M^2= \{ ([z_0:z_1:z_2], [w_1:w_2]) \in {\mathbb P}^2 \times {\mathbb P}^1 \mid z_1w_2=z_2w_1\} .$$
For $\lambda >0$, let $\omega_{\lambda}$ be the metric on $M^2$ which is the restriction of
$$ \sqrt{-1} \partial \overline{\partial } \log (|z_0|^2+ |z_1|^2+|z_2|^2) + \lambda \sqrt{-1} \partial \overline{\partial } \log (|w_1|^2+|w_2|^2) $$
on the product manifold ${\mathbb P}^2 \times {\mathbb P}^1$. By a straight forward computation, which will be included in the appendix, we will  show that $Ric>0$ everywhere if and only if $\lambda>\frac{1}{2}$, and $H>0$ everywhere if and only if $\lambda >1$. So for any $\lambda >1$, we get an example with the desired curvature condition. Note that the metric has $Ric^{\perp} \ngeq 0$. In fact, $M^2$ does not admit any K\"ahler metric with $Ric^{\perp }\geq 0$ everywhere by the result of Gu-Zhang \cite{GuZhang}.

$ \bullet$ Examples with $Ric>0$, $Ric^{\perp}>0$ but $H\ngeq 0$.

In later sections, we will construct examples of complete $\mathsf{U}(m)$-invariant K\"ahler metrics on ${\mathbb C}^m$, such that its Ricci curvature and orthogonal bisectional curvature $B^{\perp}$ are both everywhere positive, yet the holomorphic sectional curvature $H\ngeq 0$. In fact, there are such examples where $H$ is negative in some tangent directions at every point outside a compact subset. Note that as we mentioned before, $B^{\perp}>0\Rightarrow QB>0\Rightarrow Ric^{\perp} >0$.

We would also point out that, there are examples of K\"ahler metrics where one of these three curvature is positive, while the other two are not.

$ \bullet$ Examples with $H>0$ but $Ric\ngeq 0$, $Ric^{\perp}\ngeq 0$.

For instance, consider the Hirzebruch surface $F_n={\mathbb P}( {\mathcal O}_{{\mathbb P}^1} \oplus {\mathcal O}_{{\mathbb P}^1}(-n))$ with $n>2$. By a well-known result of Hitchin \cite{Hitchin}, all $F_n$ admit K\"ahler metric with $H>0$ everywhere. On the other hand, when $n> 2$, the first Chern class $c_1(F_n) \ngeq 0$, so there is no K\"ahler metric with $Ric\geq 0$. There is no K\"ahler metric with $Ric^{\perp} \geq 0$ either,  by the result of Gu-Zhang.

$ \bullet$ Examples with $Ric>0$ but $Ric^{\perp} \ngeq 0$, $H\ngeq 0$.

To see such an example, we can simply take the previous example (see the Appendix for details) of metric on ${\mathbb P}^2 \# \overline{{\mathbb P}^2}$ with the parameter $\lambda $ in $(\frac{1}{2}, 1)$. In this case one has $Ric >0$ everywhere, but $H$ is negative somewhere in some directions. The surface does not admit any metric with $Ric^{\perp}\geq 0$ for the reason given above.

Note that on this surface, there are metrics with $H>0$. In fact, it is conjectured by Yau that any rational surface (or any rational manifolds in higher dimensions) admits K\"ahler metric with $H>0$ everywhere, although this is still open for most of the rational surfaces.

$ \bullet$ Examples with $Ric^{\perp } >0$ but $Ric \ngeq 0$, $H\ngeq 0$.

  Examples of complete $\mathsf{U}(m)$-invariant K\"ahler metrics on ${\mathbb C}^m$ with the above curvature properties will be constructed in a later section. In fact the metric constructed will have $B^\perp>0$, but $Ric<0$ and $H<0$ for some directions at every point outside a compact subset.

\section{Examples--preliminary}

We will follow the notations of \cite{WZ}, \cite{HT}, and \cite{Ni-Niu2}. Let $g$ be a $\mathsf{U}(m)$-invariant K\"ahler metric on ${\mathbb C}^m$ with K\"ahler form $\omega_g$. Denote by $(z_1, \ldots , z_m)$ the standard holomorphic coordinates of ${\mathbb C}^m$ and write $r=|z_1|^2+ \cdots + |z_m|^2$. Since $g$ is $\mathsf{U}(m)$-invariant, one can write $\omega_g = \sqrt{\!-\!1}\partial \overline{\partial }P(r)$ for some smooth function $P$ on $[0,\infty )$. Note that $\omega_g >0$ means that the smooth functions $f=P'>0$ and $h=(rf)'>0$, and the metric is complete if and only if
$$ \int_0^{\infty } \sqrt{ \frac{h}{r} } \ dr = \infty . $$

Here we adapt the constructions in \cite{WZ, HT, Ni-Niu2} to illustrate unitary symmetric metrics on $\mathbb{C}^m$ with various properties promised in the last section. The basic is the ansatz and computation laid out in \cite{WZ}. Below is a summary.

In \cite{WZ}, Wu and Zheng  considered the $\mathsf{U}(m)$-invariant K\"ahler metrics on $\mathbb{C}^m$ and obtained necessary and sufficient conditions for the nonnegativity of the curvature operator, nonnegativity of the  sectional curvature, as well as the  nonnegativity of the  bisectional curvature respectively.
  In \cite{YZ}, Yang and Zheng later proved that the necessary and sufficient condition in \cite{WZ} for the nonnegativity of the  sectional curvature  holds for the nonnegativity of the complex sectional curvature under the unitary symmetry. In \cite{HT}, Huang and Tam  obtained the necessary and sufficient conditions for (NOB) and (NQOB) respectively. Moreover  they constructed a $\mathsf{U}(m)$-invariant K\"ahler metric on $\mathbb{C}^m$,  which is of  (NQOB), but does not have (NOB) nor nonnegativity of the Ricci curvature. In \cite{Ni-Niu2}, the construction was modified to illustrate an example with (NOB), but the holomorphic sectional curvature is negative somewhere. In later sections  we will construct  $\mathsf{U}(m)$-invariant K\"ahler metrics on $\mathbb{C}^m$ which has (NOB) but Ricci curvature is negative somewhere (this of course implies that holomorphic sectional curvature must be negative somewhere). We will also construct examples which has (NOB) and positive Ricci curvature, but the holomorphic sectional curvature is negative somewhere.

  We follow the same notations as in \cite{WZ, YZ}. 
  Let $(z_1, \cdots, z_m)$ be the standard coordinate on $\mathbb{C}^m$ and $r=|z|^2$. An $\mathsf{U}(m)$-invariant metric on $\mathbb{C}^m$ has the K\"ahler form
  \begin{equation}
  \omega=\frac{\sqrt{\!-\!1}}{2}\p\bar{\p} P(r)
  \end{equation}
  where $P\in C^\infty\left([0, +\infty)\right)$. Under the local coordinates, the metric has the components:
  \begin{equation}\label{eq:g}
  g_{i\bar{j}}=f(r)\delta_{ij}+f'(r)\bar{z}_i z_j.
  \end{equation}
  We further denote:
  \begin{equation}\label{eq:g1}
  f(r)=P'(r), \quad h(r)=(rf)'.
  \end{equation}
It is easy to check that $\omega$ will give a complete K\"ahler metric on $\mathbb{C}^n$ if and only if
  \begin{equation}\label{eq:10}
  f>0, \, h>0, \, \int_0^\infty \frac{\sqrt{h}}{\sqrt{r}}dr=+\infty.
  \end{equation}
  If $h>0$, then $\xi=-\frac{rh'}{h}$ is a smooth function on $[0, \infty)$ with $\xi(0)=0$. On the other hand, if $\xi$ is a smooth function on $[0, \infty)$ with $\xi(0)=0$, one can  define $h(r)=\exp(-\int_0^r \frac{\xi(s)}{s}ds)$ and $f(r)=\frac{1}{r}\int_0^r h(s)$ ds with $h(0)=1$. It is easy to see that $\xi(r)=-\frac{rh'}{h}$.
  Then (\ref{eq:g}) defines a $\mathsf{U}(m)$-invariant K\"ahler metric on $\mathbb{C}^m$.

   The components of the curvature operator of a $\mathsf{U}(m)$-invariant K\"ahler metric under the orthonormal frame $\{E_1=\frac{1}{\sqrt{h}}\p_{z_1}, E_2=\frac{1}{\sqrt{f}}\p_{z_2}, \cdots, E_m=\frac{1}{\sqrt{f}} \p_{z_m}\}$ at $(z_1, 0, \cdots, 0)$  are given as follows, see \cite{WZ}:
  \begin{eqnarray}
  	A&=& R_{1\bar{1}1\bar{1}}=-\frac{1}{h}\left(\frac{rh'}{h}\right)'=\frac{\xi'}{h}; \label{eq:A}\\
  	B&=& R_{1\bar{1}i\bar{i}}=\frac{f'}{f^2}-\frac{h'}{hf}=\frac{1}{(rf)^2}\left[rh-(1-\xi)\int_0^r h(s)\,  ds\right],\,  i\ge 2;\label{eq:B}\\
  	C&=& R_{i\bar{i}i\bar{i}}=2R_{i\bar{i}j\bar{j}}=-\frac{2f'}{f^2}=\frac{2}{(rf)^2}\left(\int_0^r h(s)\,  ds-rh\right),\, i\neq j, i, j\ge 2.\label{eq:C}
  \end{eqnarray}
   The other components of the curvature tensor are zero, except those obtained by the symmetric properties of curvature tensor.

   The following result was  proved in \cite{WZ}, which plays an important role in the construction.
   \begin{theorem}[Wu-Zheng]
   	(1) If $0<\xi<1$ on $(0, \infty)$, then $g$ is complete.
   	
   	(2) $g$ is complete and has positive  bisectional curvature if and only if $\xi'>0$ and $0<\xi<1$ on $(0, \infty)$, where $\xi'>0$ is equivalent to $A>0, B>0$ and $C>0$.
   	
   	(3) Every complete $\mathsf{U}(m)$-invariant K\"ahler metric on $\mathbb{C}^m$ with positive bisectional curvature is given by a smooth function $\xi$ in (2).
   \end{theorem}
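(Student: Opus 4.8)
The plan is to exploit the $\mathsf{U}(m)$-symmetry to reduce everything to the scalar quantities $f,h,\xi$ along a single ray, and then to read off the curvature positivity from \eqref{eq:A}--\eqref{eq:C}. First I would work at a base point $p=(z_1,0,\dots,0)$ with $r=|z_1|^2>0$ (the origin being governed by the smoothness of the potential and handled by continuity). The isotropy of $p$ in $\mathsf{U}(m)$ is $\mathsf{U}(1)\times\mathsf{U}(m-1)$, under which the orthonormal frame $\{E_i\}$ transforms by $E_1\mapsto e^{\sqrt{-1}\theta}E_1$ and $(E_2,\dots,E_m)\mapsto(E_2,\dots,E_m)U$; since isometries fixing $p$ preserve $R_p$, the only components of $R_p$ that can be nonzero (up to the Kähler symmetries) are $R_{1\bar11\bar1}=A$, $R_{1\bar1i\bar i}=R_{1\bar ii\bar1}=B$ ($i\ge2$), and $R_{i\bar ii\bar i}=C$, $R_{i\bar ij\bar j}=R_{i\bar jj\bar i}=\tfrac{C}{2}$ ($i\ne j\ge2$), with $A,B,C$ as in \eqref{eq:A}--\eqref{eq:C}. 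Expanding $R(Z,\overline{Z},W,\overline{W})$ for unit vectors $Z=\sum a_iE_i$, $W=\sum b_iE_i$ and writing $s=|a_1|^2$, $t=|b_1|^2$, $\eta=\sum_{i\ge2}\overline{a_i}b_i$, one obtains the identity
$$R(Z,\overline{Z},W,\overline{W})=Ast+B\Big(s(1-t)+t(1-s)+2\,\mathrm{Re}(a_1\overline{b_1}\,\eta)\Big)+\tfrac{C}{2}\Big((1-s)(1-t)+|\eta|^2\Big).$$
Since $|\eta|\le\sqrt{(1-s)(1-t)}$ by Cauchy--Schwarz, the $B$-bracket is $\ge(\sqrt{s(1-t)}-\sqrt{t(1-s)})^2\ge0$ and the $C$-bracket is $\ge0$; a short case analysis of when all three summands vanish at once then shows $A,B,C>0\Rightarrow R(Z,\overline{Z},W,\overline{W})>0$, while the choices $(Z,W)=(E_1,E_1),(E_1,E_i),(E_i,E_i)$ recover $A,B,C$. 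Hence positivity of the bisectional curvature is equivalent to $A>0,\,B>0,\,C>0$ at every $r>0$.

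Next I would translate this into a condition on $\xi$. From \eqref{eq:A}, $A=\xi'/h$ and $h>0$, so $A>0\iff\xi'>0$. For the other two, set $I(r)=\int_0^r h(s)\,ds=rf$ and $u(r)=rh-(1-\xi)I$, so that $B=u/I^2$ by \eqref{eq:B}; using $rh'=-\xi h$ one computes $u'=\xi'I$, and since $u(0)=0$ this gives $\xi'>0\Rightarrow u>0\Rightarrow B>0$. Moreover $\xi'>0$ together with $\xi(0)=0$ forces $\xi>0$ on $(0,\infty)$, hence $h'=-\xi h/r<0$, so $h$ is strictly decreasing, $I(r)>rh(r)$, and $C=\tfrac{2}{I^2}(I-rh)>0$ by \eqref{eq:C}. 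Combined with $A>0\Rightarrow\xi'>0$, this yields $\xi'>0\iff(A>0,B>0,C>0)$ on $(0,\infty)$, which is the parenthetical claim in (2).

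It remains to handle completeness, which is where the bound $\xi<1$ enters. If $0<\xi<1$ then $\xi(s)/s$ is bounded near $0$ (as $\xi(0)=0$) while $\xi(s)/s<1/s$ for $s\ge1$, so $\int_0^r\xi(s)/s\,ds\le K+\log r$, giving $h(r)=\exp(-\int_0^r\xi(s)/s\,ds)\ge c/r$ and $\int^\infty\sqrt{h/r}\,dr\ge\sqrt{c}\int^\infty dr/r=\infty$; with $f,h>0$ this is exactly \eqref{eq:10}, proving (1). Conversely, if $g$ is complete with positive bisectional curvature, then $\xi'>0$ by the above and hence $\xi>0$; if $\xi(r_0)\ge1$ for some $r_0$ then strict monotonicity gives $\xi\ge c>1$ on $[r_1,\infty)$ for some $r_1>r_0$, whence $h(r)\le C'r^{-c}$ and $\int^\infty\sqrt{h/r}\,dr\le\sqrt{C'}\int^\infty r^{-(c+1)/2}\,dr<\infty$, contradicting \eqref{eq:10}; therefore $0<\xi<1$. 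This gives the forward direction of (2), and the reverse direction is (1) combined with the first two paragraphs. Finally, for (3): any $\mathsf{U}(m)$-invariant Kähler form on $\mathbb{C}^m$ equals $\tfrac{\sqrt{-1}}{2}\,\partial\bar\partial P(r)$ (by the $\partial\bar\partial$-lemma and averaging over $\mathsf{U}(m)$), producing $f,h$ and a smooth $\xi$ with $\xi(0)=0$; by (2) its bisectional curvature is positive iff $\xi'>0$ and $0<\xi<1$, and conversely any such $\xi$ reconstructs, via $h=\exp(-\int_0^r\xi(s)/s\,ds)$ and $f=\tfrac{1}{r}\int_0^r h(s)\,ds$, a complete metric of the required type.

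The step I expect to be the main obstacle is the first one: verifying the explicit bisectional-curvature formula and then showing that pointwise positivity of the three frame curvatures $A,B,C$ already forces positivity of the full bisectional curvature tensor, i.e. absorbing the cross term $\mathrm{Re}(a_1\overline{b_1}\,\eta)$ into the diagonal terms via Cauchy--Schwarz; the companion quantitative estimate in the completeness step that rules out $\xi\ge1$ is the other point that needs care.
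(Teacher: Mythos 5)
Your proposal is correct. Note that the paper itself does not prove this theorem --- it is quoted from Wu--Zheng \cite{WZ} --- and your argument is essentially the standard one from that source: the $\mathsf{U}(1)\times\mathsf{U}(m-1)$ isotropy reduction to the three components $A,B,C$, the Cauchy--Schwarz absorption of the cross term showing $A,B,C>0$ is equivalent to positive bisectional curvature, the identity $\bigl(rh-(1-\xi)\int_0^r h\bigr)'=\xi'\int_0^r h$ giving $\xi'>0\Rightarrow B>0$ (and $\xi>0\Rightarrow h$ decreasing $\Rightarrow C>0$), and the growth estimates on $h$ that tie $0<\xi<1$ to the completeness criterion $\int_0^\infty\sqrt{h/r}\,dr=\infty$. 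All the computations check out.
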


It was proved in \cite{WZ}, \cite{HT}, and \cite{Ni-Niu2} that the following result holds.

\begin{proposition}\label{prop-51}
Let $g$ be a $\mathsf{U}(m)$-invariant K\"ahler metric on ${\mathbb C}^m$, with positive functions $f$, $h$ on $[0, \infty )$ described as above. Then
 \newline (i)\quad   $g$ has positive bisectional curvature $\iff$ $A>0$, $B>0$, $C>0$  $\iff$ $A>0$.
 \newline (ii) \quad If $m\geq 3$, then $g$ has positive orthogonal bisectional curvature $\iff$  $B>0$, $C>0$, $A+C>0$.
 \newline (iii)\quad  If $m\geq 3$, then $g$ has positive orthogonal bisectional and positive Ricci curvature $\iff$  $B>0$, $C>0$, $A+C>0$, $A+(m-1)B>0$.
\end{proposition}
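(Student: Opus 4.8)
The plan is to use the $\mathsf{U}(m)$-invariance to check each of the three curvature positivity conditions only at points of the form $q=(z_1,0,\dots,0)$ (as $r=|z_1|^2$ ranges over $[0,\infty)$), where the curvature tensor is encoded by the three scalars $A,B,C$ of \eqref{eq:A}, \eqref{eq:B}, \eqref{eq:C}, and then to reduce the analytic content to elementary one-variable calculus in $r$. The computational input I would record first is the expansion of the bisectional curvature in the orthonormal frame $\{E_1,\dots,E_m\}$ at $q$: for type $(1,0)$ vectors $Z=z_1E_1+Z'$ and $W=w_1E_1+W'$ with $Z',W'\in\operatorname{span}\{E_2,\dots,E_m\}$, and with $\langle Z',W'\rangle:=\sum_{i\ge2}z_i\bar{w}_i$, a bookkeeping of the nonzero components $R_{1\bar{1}1\bar{1}}=A$, $R_{1\bar{1}i\bar{i}}=B$, $R_{i\bar{i}i\bar{i}}=C$, $R_{i\bar{i}j\bar{j}}=\tfrac{1}{2}C$ (and their K\"ahler symmetries) gives
\begin{equation*}
R(Z,\overline{Z},W,\overline{W})=A|z_1|^2|w_1|^2+B\bigl(|z_1|^2|W'|^2+|w_1|^2|Z'|^2+2\operatorname{Re}(\bar{z}_1 w_1\langle Z',W'\rangle)\bigr)+\tfrac{1}{2}C\bigl(|Z'|^2|W'|^2+|\langle Z',W'\rangle|^2\bigr).
\end{equation*}
All three parts are read off from this one identity; the Ricci tensor at $q$ is then immediate.

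For part (i), substituting $(Z,W)=(E_1,E_1),(E_1,E_i),(E_i,E_i)$ with $i\ge2$ shows that positive bisectional curvature forces $A>0$, $B>0$, $C>0$. Conversely, given $A,B,C>0$, the bounds $2\operatorname{Re}(\bar{z}_1w_1\langle Z',W'\rangle)\ge-2|z_1|\,|w_1|\,|Z'|\,|W'|$ and $|\langle Z',W'\rangle|^2\ge0$ convert the identity into $R(Z,\overline{Z},W,\overline{W})\ge A|z_1|^2|w_1|^2+B\bigl(|z_1|\,|W'|-|w_1|\,|Z'|\bigr)^2+\tfrac{1}{2}C|Z'|^2|W'|^2$, which is strictly positive unless $Z=0$ or $W=0$. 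The remaining equivalence $A>0\iff A,B,C>0$ (which also follows from the Wu--Zheng theorem recalled above, since $A=\xi'/h$ and $h>0$) I would prove purely in one variable: with $v(r):=rf(r)=\int_0^r h$, the formulas \eqref{eq:B}, \eqref{eq:C} read $C=\tfrac{2}{v^2}(v-rh)$ and $B v^2=rh-(1-\xi)v$; using $rh'=-\xi h$ one checks that $\tfrac{d}{dr}(v-rh)=\xi h$ and $\tfrac{d}{dr}\bigl(rh-(1-\xi)v\bigr)=\xi'v$, and that both quantities vanish at $r=0$. Hence if $A>0$, i.e. $\xi'>0$ on $(0,\infty)$, then $\xi>0$ there, so $v-rh>0$ and $rh-(1-\xi)v>0$ on $(0,\infty)$, i.e. $C>0$ and $B>0$; the converse is trivial. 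This one-variable monotonicity step (spotting the right antiderivatives) is the place where a little ingenuity is needed, and I expect it to be the main obstacle; everything else is bookkeeping and elementary optimization.

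For part (ii), if $Z\perp W$, then $\langle Z,\overline{W}\rangle=\sum_i z_i\bar{w}_i=0$ forces $\langle Z',W'\rangle=-z_1\bar{w}_1$, so with $a=|z_1|^2$, $b=|w_1|^2$, $P=|Z'|$, $Q=|W'|$ the identity collapses to
\begin{equation*}
R(Z,\overline{Z},W,\overline{W})=ab\bigl(A-2B+\tfrac{1}{2}C\bigr)+B\bigl(aQ^2+bP^2\bigr)+\tfrac{1}{2}CP^2Q^2,
\end{equation*}
subject only to the realizability constraint $ab=|\langle Z',W'\rangle|^2\le P^2Q^2$; here the hypothesis $m\ge3$ is exactly what guarantees that every admissible $(a,b,P,Q)$ with the correct phase arises from an actual orthogonal pair. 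Necessity of $B>0$, $C>0$, $A+C>0$ comes from the test pairs $(E_1,E_i)$, $(E_2,E_3)$ and $\tfrac{1}{\sqrt{2}}(E_1+E_2),\tfrac{1}{\sqrt{2}}(E_1-E_2)$, respectively. For sufficiency, assuming $B,C>0$ the displayed right-hand side is strictly increasing in $P^2$ and in $Q^2$ (for $W\neq0$, resp. $Z\neq0$), so its minimum over the admissible region $\{P^2Q^2\ge ab\}$ is attained where $P^2Q^2=ab$; there, substituting $P^2=ab/Q^2$ and minimizing the resulting function of $Q^2$ by AM--GM gives the value $ab(A+C)$, so $R(Z,\overline{Z},W,\overline{W})\ge ab(A+C)>0$ when $a,b>0$, while $R(Z,\overline{Z},W,\overline{W})>0$ is immediate when $ab=0$. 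Hence $B>0$, $C>0$, $A+C>0$ also suffices.

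For part (iii), since at $q$ the only nonzero curvature components are the ones contributing $A$, $B$, $C$, the Ricci form is diagonal in $\{E_i\}$ with $Ric(E_1,\overline{E}_1)=A+(m-1)B$ and $Ric(E_i,\overline{E}_i)=B+\tfrac{m}{2}C$ for $i\ge2$. Under the orthogonal-bisectional conditions of part (ii) we have $B,C>0$, so the second eigenvalue is automatically positive; hence positivity of the Ricci curvature at $q$ is equivalent to the single additional inequality $A+(m-1)B>0$, and combining this with part (ii) yields (iii). Finally one checks, by explicit choices of $(A,B,C)$, that none of the four conditions is redundant (for instance $A+C>0$ does not imply $A+(m-1)B>0$, nor conversely), so the characterization is sharp.
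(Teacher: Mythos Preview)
Your argument is correct. The paper itself does not prove Proposition~\ref{prop-51}; it simply records the statement and attributes it to \cite{WZ}, \cite{HT}, and \cite{Ni-Niu2} (with part~(i) also restated separately as the Wu--Zheng theorem just before). So there is no ``paper's own proof'' to compare against, and what you have written is in fact a self-contained proof filling in what the paper outsources.

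Your approach is the expected one from those references: expand $R(Z,\overline{Z},W,\overline{W})$ in the adapted unitary frame at a point on the $z_1$-axis, then reduce each positivity condition to elementary algebra or one-variable calculus. A few small comments. In part~(i), the monotonicity step you flag as ``the main obstacle'' is exactly the argument in \cite{WZ}, and your antiderivative identifications $(v-rh)'=\xi h$ and $\bigl(rh-(1-\xi)v\bigr)'=\xi'v$ are the right ones. In part~(ii), the realizability remark (that $m\ge 3$ makes every admissible $(a,b,P,Q)$ come from an actual orthogonal pair) is true but not needed for the sufficiency direction: you are minimizing over a region that \emph{contains} all realized tuples, so positivity of the minimum already suffices; realizability only matters for necessity, where you instead use explicit test pairs. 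Also, ``the minimum is attained where $P^2Q^2=ab$'' is slightly imprecise since the region is unbounded, but your AM--GM step shows the infimum along that boundary curve is $ab(A+C)$ and is actually attained at $P^2=a$, $Q^2=b$, which is enough. In part~(iii), your Ricci eigenvalue computation $Ric(E_1,\overline{E}_1)=A+(m-1)B$, $Ric(E_i,\overline{E}_i)=B+\tfrac{m}{2}C$ is correct and matches what the paper uses implicitly later in \S7--\S8. The closing remark about non-redundancy of the four conditions is extra commentary, not part of the proposition, but it is consistent with the examples the paper constructs in \S7--\S8.
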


Note that when $m=2$, the positivity of the orthogonal bisectional curvature no longer guarantees $C>0$, and the curvature condition for (ii) actually becomes $B>0$ and $A+C>0$; while the condition for (iii) becomes $B>0$, $A+B>0$, $C+B>0$, and $A+C>0$. In particular, the ``$\Longleftarrow $" part of (ii) and (iii) are still valid when $m =2$.

As noted in \cite{WZ}, there are plenty of metrics satisfying (i). In \cite{HT}, the authors perturbed metrics in (i) to obtain metrics in (ii) that are not in (i). For case (iii), as well as the comparison theorem proved earlier,    the following questions are natural (the first question was raised in (\cite{Ni-Niu2}):

\noindent {\bf Questions.} 1) Does there exist a complete $\mathsf{U}(m)$-invariant K\"ahler metric on ${\mathbb C}^m$ with positive orthogonal bisectional curvature, positive Ricci curvature, but does not have nonnegative holomorphic sectional curvature? Namely, a metric $g$ such that $B$, $C$, $A+C$, $A+(m-1)B$ are positive functions on $[0,\infty )$, while $A$ is negative somewhere.

2) Does there exist a complete $ \mathsf{U}(m)$-invariant K\"ahler metric on  ${\mathbb C}^m$ with positive orthogonal bisectional curvature and negative Ricci curvature somewhere?

\vspace{0.2cm}

In \cite{WZ}, the authors used the $\xi$ function to describe $\mathsf{U}(m)$-invariant K\"ahler metrics on ${\mathbb C}^m$, which is defined by
$ \xi = - \frac{rh'}{h}$.
Clearly, $\xi$ is smooth on $[0,\infty )$ with $\xi (0)=0$, and is determined by $g$. Conversely, $\xi$ determines $h$ and $f$ up to a positive constant multiple, and as proved in \cite{WZ}, if $0<\xi <1$ in $(0,\infty )$, then the metric $g$ determined by $\xi$ is complete.

In terms of $\xi$, the above question (i) can be rephrased (see the last paragraph of \cite{Ni-Niu2}) as finding a smooth function $\xi$ on $[0,\infty )$ with $\xi(0)=0$ and $0< \xi <1$ on $(0,\infty )$, such that $\xi'<0$ somewhere, yet
\begin{eqnarray*}
& & rh - (1-\xi )\int_0^r \! h(s)ds  > 0; \\
& & \int_0^r \! h(s)ds  - rh  \ > 0;\\
& & \xi' + \frac{2h}{(rf)^2} \ \big( \int_0^r\!h(s)ds  - rh \big) \ > 0; \\
& & \xi' + \frac{(m-1)h}{(rf)^2} \ \big( rh - (1-\xi )\int_0^r \! h(s)ds \big) \ > 0
\end{eqnarray*}
everywhere on $(0,\infty )$.

It is not obvious why such a function must exist. So we will resort to another characterization of $\mathsf{U}(m)$-invariant metrics in \S 5 of \cite{WZ}  by the generating surface of revolution.

\section{Examples--a characterization}

Let us first recall the characterization of $\mathsf{U}(m)$-invariant metrics by surface of revolutions given in \S 5 of \cite{WZ}. Let $g$ be a complete $\mathsf{U}(m)$-invariant K\"ahler metric on ${\mathbb C}^m$, with $h$, $f$ defined as before. Let us assume that $h'<0$ everywhere. Write $\xi = - \frac{rh'}{h}$, then we have $0<\xi < 1$ on $(0,\infty )$ by the assumption $h'<0$ and the completeness of $g$.

Write $x=\sqrt{rh}$. On $(0,\infty )$, we have $x'= \frac{\sqrt{h} (1-\xi )} {2\sqrt{r}} >0$, so $x$ is a strictly increasing function and $x'^2< \frac{h}{4r}$. Define a positive, strictly increasing function $y$ on $(0,\infty )$ so that $y(0^+)=0$ and
$$ x'^2 + y'^2 = \frac{h}{4r}.$$
The metric $g$ is determined by the smooth function $y=F(x)$ on $(0,x_0)$, where $x_0=\lim_{r\rightarrow \infty }\sqrt{rh}\leq \infty $. It is easy to see that $F$ is actually smooth on $[0,x_0)$ and $F(0)=0$. From the definition, we have the relationship
$$ 1+ \big(\frac{dF}{dx}\big)^2 = \frac{1}{(1-\xi )^2}.$$
As computed in \cite{WZ}, in terms of this generating function $F(x)$, the curvature component functions are
$$ A = \frac{F'F''}{2x(1+F'^2)^2} , \ \ \ \  B = \frac{1}{v^2} \big( x^2\!- \!\frac{v}{\sqrt{1+F'^2}} \big) , \ \ \ \ C = \frac{2}{v^2} (v-x^2), $$
where
$$ v (x)= rf = \int_0^x 2\tau \sqrt{1+F'^2(\tau )} d\tau.$$
To simplify these expressions, let us use the trick in \cite{WZ} by letting
$$ F(x) = \frac{1}{2}p(x^2), \ \ \ \  p(t)=\int_0^t\! \sqrt{q(\tau )}\ d\tau , \ \ \ \  q(t) = \frac{(k(t))^2-1} {t} $$
where $k(t)$ is a smooth function on $[0,\infty )$ such that $k(0)=1$ and $k(t)>1$ when $t>0$. We have
$$ F'(x) = xp'(x^2)=x \sqrt{q(x^2)},$$
therefore
$$ 1+F'^2(x) = 1+x^2q(x^2)=(k(x^2))^2. $$
Now let us denote by $t=x^2$, and $u(t)=\int_0^tk(\sigma )d\sigma $, then by a straight forward computation, we get
$$ A=\frac{k'}{k^3}, \ \ \ \ B = \frac{1}{ku^2}(tk-u), \ \ \ \ C=\frac{2}{u^2}(u-t). $$
Write $u(t)=t+t\alpha (t)$. Then $k=u'=1+\alpha +t\alpha'$, and
\begin{equation}\label{eq:ch1} A=\frac{t\alpha''+2\alpha'}{(1+\alpha +t\alpha')^3}, \ \  B = \frac{\alpha'}{(1+\alpha +t\alpha')(1+\alpha)^2}, \ \  C=\frac{2\alpha }{t(1+\alpha )^2}.
\end{equation}

\section{Examples with (NOB), positive Ricci, but negative holomoprhic sectional curvature somewhere}

The goal here is to prove the following result, which affirmatively answers a question in \cite{Ni-Niu2}.

\begin{theorem}\label{thm-example1}
For any $m\geq 2$, there are complete $\mathsf{U}(m)$-invariant K\"ahler metrics on  ${\mathbb C}^m$ with positive Ricci curvature and positive orthogonal bisectional curvature everywhere, yet the holomorphic sectional curvature is negative somewhere.
\end{theorem}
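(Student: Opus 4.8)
The plan is to produce the required metric inside the $\mathsf{U}(m)$-invariant ansatz, using the surface-of-revolution parametrization recalled above. In that description a complete metric (in the regime $h'<0$) is determined by a smooth function $\alpha$ on $[0,\infty)$ with $\alpha(0)=0$, via $u(t)=t(1+\alpha(t))$ and $k=u'=1+\alpha+t\alpha'$, and its curvature functions $A$, $B$, $C$ are given by (\ref{eq:ch1}). By Proposition \ref{prop-51} together with the remark following it (which covers the case $m=2$), the metric determined by $\alpha$ has positive orthogonal bisectional curvature and positive Ricci curvature precisely when $B>0$, $C>0$, $A+C>0$ and $A+(m-1)B>0$ on $(0,\infty)$, while its holomorphic sectional curvature fails to be nonnegative as soon as $A<0$ somewhere, since $A$ is the holomorphic sectional curvature in the radial $(1,0)$-direction.

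First I would observe that if one requires $\alpha>0$ and $\alpha'>0$ throughout $(0,\infty)$, then automatically $k>1$ there, hence $0<\xi=(k-1)/k<1$ and the metric is complete by the Wu-Zheng criterion recalled above; moreover $B>0$ and $C>0$ hold automatically, and $A$ has the same sign as $2\alpha'+t\alpha''=\tfrac1t(t^{2}\alpha')'$. So the whole problem reduces to choosing a positive, strictly increasing, smooth $\alpha$ with $\alpha(0)=0$ such that (a) $2\alpha'+t\alpha''<0$ on some interval while (b) $A+C>0$ and $A+(m-1)B>0$ everywhere. Condition (a) forces $\alpha'$ to decay faster than $t^{-2}$ somewhere; condition (b) forbids it from decaying faster than $t^{-(m+1)}$ and in addition needs $\alpha$ itself to be small, which is why a one-parameter family of small amplitude is the natural candidate.

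Concretely I would fix $\beta=\tfrac52$ (any $\beta$ with $2<\beta<\min\{4,m+1\}$ works, and $\tfrac52$ qualifies for every $m\ge2$) and a small parameter $c>0$, and take $\alpha(t)=\frac{c}{\beta-1}\bigl(1-(1+t)^{1-\beta}\bigr)$, so that $\alpha'(t)=c(1+t)^{-\beta}>0$ and $\alpha''(t)=-c\beta(1+t)^{-\beta-1}<0$. This $\alpha$ is smooth on $[0,\infty)$, vanishes at $0$, and is positive and strictly increasing, and $2\alpha'+t\alpha''=c(1+t)^{-\beta-1}\bigl(2(1+t)-\beta t\bigr)$ is negative precisely for $t>\tfrac{2}{\beta-2}$. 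Since $k$ is bounded (it tends to $1+\tfrac{c}{\beta-1}$), $\xi$ stays bounded away from $1$, hence $rh\to\infty$ and $t=rh$ exhausts all of $[0,\infty)$; therefore $A<0$ at every point outside a large ball, so the holomorphic sectional curvature is negative there. For (b), clearing the positive denominators turns $A+(m-1)B>0$ into $(2\alpha'+t\alpha'')(1+\alpha)^{2}+(m-1)\alpha'k^{2}>0$ and $A+C>0$ into $(2\alpha'+t\alpha'')t(1+\alpha)^{2}+2\alpha k^{3}>0$; dividing by $c$ and letting $c\to0^{+}$ (so $\alpha\to0$ and $k\to1$ with controlled remainders), the first expression tends to $(1+t)^{-\beta-1}\bigl[(m+1)+(m+1-\beta)t\bigr]$, which is positive on $[0,\infty)$ because $\beta<m+1$, and the second tends to an explicit function $\Phi(t)$ with $\Phi(0)=0$, $\Phi'(0)=4$, $\lim_{t\to\infty}\Phi(t)=\tfrac{2}{\beta-1}$, and $\Phi>0$ on $(0,\infty)$ whenever $2<\beta<4$. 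Hence for $c$ small enough both inequalities hold on all of $(0,\infty)$, and the metric defined by this $\alpha$ has every asserted property.

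The step I expect to be the main obstacle is (b): proving that $A+C$ and $A+(m-1)B$ remain positive on the \emph{entire} half-line, not just asymptotically, and that this survives the finite perturbation away from the $c\to0$ limit. The delicate point is that both quantities scale like $c$ near $t=0$ and decay as $t\to\infty$, so one cannot bound them below by a $c$-independent constant; one instead splits $(0,\infty)$ into a neighborhood of $0$ (where $A+C\sim 4ct$ and the correction is $O(ct)$), a compact middle range (where the limit function is bounded below and the correction is $O(c)$), and a neighborhood of $\infty$ (where the limit function has a positive limit), and checks the sign on each piece. This is what pins down the admissible range of $\beta$ and forces $c$ to be small; the remaining verifications are elementary estimates on the explicit expressions above.
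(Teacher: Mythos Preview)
Your outline is correct and lands on the same framework as the paper (the $\alpha$-parametrization of \S6, with $\alpha>0$, $\alpha'>0$ ensuring completeness and $B,C>0$, and the sign of $A$ read off from $(t^2\alpha')'$), but the two proofs diverge at the crucial step of verifying $A+C>0$ and $A+(m-1)B>0$.

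The paper takes $\alpha(t)=\lambda\bigl(1-(1+t^2)^{-a}\bigr)$ with $a\in(\tfrac12,1)$ and $\lambda$ \emph{large}, and checks the inequalities by a direct monomial computation: the identity
\[
(t^b\alpha')'=\frac{2a\lambda t^b}{(1+t^2)^{a+2}}\bigl[(b+1)+(b-1-2a)t^2\bigr]
\]
immediately gives $k^3(A+B)\ge t\alpha''+3\alpha'=(t^3\alpha')'>0$ (since $a<1$), hence $A+(m-1)B>0$; for $A+C$ one splits into $2\alpha<1$ (which, for $\lambda$ large, forces $t<t_0$ and hence $A>0$) and $2\alpha\ge1$ (where $k^3C\ge\alpha'$ reduces again to $(t^3\alpha')'>0$). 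Every step is an explicit sign check; no limiting argument is needed.

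You instead take the amplitude \emph{small} and argue by perturbation: dividing by $c$ and letting $c\to0$, the inequalities reduce to positivity of the limit functions $\Psi(t)=(1+t)^{-\beta-1}\bigl[(m+1)+(m+1-\beta)t\bigr]$ and $\Phi(t)=(t^2\tilde\alpha')'+2\tilde\alpha$. This works, but it carries more obligations than your sketch discharges. You assert $\Phi>0$ on $(0,\infty)$ without proof; this is true for $\beta=\tfrac52$ (first term nonnegative for $t\le4$, and for $t>4$ a crude bound shows the second term dominates) but it is a genuine verification, not a formality. More importantly, your three-region argument must control the remainder $E(t,c)/c-\Phi(t)$ uniformly in $t$: near $t=0$ both $\Phi$ and the remainder vanish linearly, so you need to compare first-order Taylor expansions (this works, since $\partial_t(E/c)|_{t=0}=4$ for every $c$); near $t=\infty$ for the $A+(m-1)B$ inequality both $\Psi$ and its remainder decay like $t^{-\beta}$, so the perturbation is not lower order and one must check the sign of the full asymptotic coefficient (it is $(m+1-\beta)(1+\tfrac{2c}{\beta-1})>0$, so this also works). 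None of this is hard, but it is exactly the content your final paragraph defers; the paper's large-amplitude argument bypasses all of it by reducing each inequality to the sign of a quadratic in $t^2$.
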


Now that the expressions of the curvature components are reasonably simple, we could try to find functions $\alpha$ so that the desired curvature conditions are satisfied. For instance, let us consider the smooth function $\alpha (t)$ given by
\begin{equation}\label{eq-71}
\alpha (t) = \lambda \big(1 - \frac{1}{(1+t^2)^a}\big) ,
\end{equation}
where $a$, $\lambda$ are positive constants with $a\in (\frac{1}{2},1)$. We have $\alpha (0)=0$, and
$$\alpha '=\frac{2a\lambda t}{(1+t^2)^{a+1}}.$$
So $\alpha $ and $\alpha'$ are positive on $(0,\infty )$.  Note that the function $\alpha'$ and $\frac{\alpha }{t}$ are actually also positive at $t=0$. By formula (6.1) in the previous section, we have $B>0$, $C>0$ everywhere. Note that $A(0)>0$ as well, so the bisectional curvature of the metric $g$ is positive at the origin.

Let us examine the situation away from the origin. For a constant $b>0$, we compute
$$ (t^b \alpha')' = \frac{2a\lambda t^b}{(1+t^2)^{a+2}} \big( (b+1)+(b-1-2a)t^2\big). $$
For $b=2$, the right hand side factor becomes $3-(2a-1)t^2$, so the sign of $A$, or equivalently the sign of $t\alpha''+2\alpha'$, is the same as that of $(t_0-t)$, where $t_0=\sqrt{\frac{3}{2a-1}}$. That is, we have
$$ A>0  \ \ \mbox{on} \ [0, t_0) ,\ \ \ \mbox{and} \ \ A<0 \  \ \mbox{on}  \  (t_0, \infty ). $$
For $b=3$, $b-1-2a=2-2a>0$, so $(t^3\alpha')'>0$, thus by formula (6.1)
$$ k^3(A+(n-1)B) \geq k^3(A+B) \geq t\alpha'' + 3\alpha' > 0.$$
It remains to check the condition $A+C>0$. We have
$$k^3C\geq (1+\alpha +t\alpha' )\frac{2\alpha}{t} .$$
So when $2\alpha \geq 1$, we have $k^3C\geq \alpha '$, hence $k^3(A+C) \geq  t\alpha '' +3\alpha ' >0$.
Let us fix $a\in (\frac{1}{2}, 1)$, and choose $\lambda $ sufficiently large so that
$$ \frac{1}{2\lambda -1} < \left(1+\frac{3}{2a-1}\right)^a - 1,$$
in this case we have
$$ \left( \frac{2\lambda }{2\lambda -1} \right)^{\frac{1}{a}} -1 < \frac{3}{2a-1}. $$
Note that
$$ 2\alpha <1 \iff 1-\frac{1}{(1+t^2))^a} <\frac{1}{2\lambda} \iff t^2 < \left(\frac{2\lambda }{2\lambda -1}\right)^{\frac{1}{a}} -1 .$$
So by our choice of $\lambda$ we have $t< t_0=\sqrt{\frac{3}{2a-1}}$. But in this case $A>0$, thus $A+C>0$ as well.

This completes the proof of Theorem \ref{thm-example1}. Note that the metric $g$ given by $\alpha$ in (2) has positive bisectional curvature in a ball $B_c$, while outside the ball, at every point the holomorphic sectional curvature is negative in some direction.

One can also construct examples satisfying Theorem \ref{thm-example1} while the bisectional curvature is positive outside an annulus, in particular, outside a compact subset. To see such an example, let us consider
\begin{equation}
\alpha = t-2at^2+t^3,
\end{equation}
where $a>0$ is a constant to be determined. We have
\begin{eqnarray*}
&& \frac{\alpha }{t} = 1 - 2at + t^2 \\
& & \alpha ' = 1 - 4at +3t^2\\
& &  t\alpha'' +2\alpha '= 2(1-6at+6t^2)\\
& &  t\alpha'' +3\alpha '=3-16at+15t^2\\
&& t\alpha'' +2\alpha '+\frac{2\alpha}{t} = 2(2-8at+7t^2)
\end{eqnarray*}
We want to choose $a$ so that the middle line is negative somewhere, while the other four are positive everywhere in $(0,\infty )$. The first two guarantee that $B>0$, $C>0$, while last two imply that $A+B>0$, $A+C>0$. The middle term shares the same sign with $A$.

Note that for positive constants $a$, $b$, $c$, the polynomial $a-bt+ct^2$ will be everywhere positive on $[0,\infty )$ if and only if $b^2 < 4ac$, and when $b^2>4ac$, the polynomial will be negative in the interval $[t_1, t_2]$ where $t_1>t_2>0$ are the two roots. Applying this criteria to the five quadratic polynomials above, we know that we want respectively
$$ a^2<1, \ \ \ a^2<\frac{3}{4}, \ \  \ a^2> \frac{2}{3}, \ \ \ a^2 < \frac{45}{64}, \ \ \  a^2<\frac{7}{8} .$$
Since $\frac{2}{3} < \frac{45}{64} < \frac{3}{4}$, if we choose $a>0$ so that $\frac{2}{3}<a^2<\frac{45}{64}$, then the corresponding metric $g$ will have positive orthogonal bisectional and positive Ricci curvature everywhere, while the holomorphic sectional curvature is negative in some directions at every point in an annulus. The bisectional curvature is positive outside the annulus.

\section{The examples with (NOB) but negative Ricci curvature and negative holomorphic sectional curvature  somewhere}

We present here two constructions. The first one is along the line of \cite{HT} (see also \cite{Ni-Niu2}).
 Let $\xi$ be a smooth function on $[0, \infty)$ with $\xi(0)=0, \xi'(r)>0$ and $0<\xi(r)<1$ for $0<r<\infty$. Let $a=\lim_{r\rightarrow \infty} \xi(r)$. Then $0<a\le 1$. By the the discussion in the pervious sections,  this gives a complete $\mathsf{U}(m)$-invariant metric on $\mathbb{C}^m$ with positive bisectional curvature. The strategy of \cite{HT} is to perturb this metric by adding a perturbation term to $\xi$. This then  yields  one metric with the needed property.  It starts with some estimates for the metrics with positive bisectional curvature. In \cite{HT, WZ}   the following estimates (cf. Lemma 4.1 of \cite{HT}) were obtained.
 \begin{lemma}\label{lm:HT}
 	Let $\xi$ be as above with $\lim_{r\to \infty}\xi =a \, (\in (0, 1))$. The following holds\\
 	(1) $\lim_{r\to \infty} h(r)=0$ and $\lim_{r\to\infty} \frac{h(r+r_0)}{h(r)}=1$ for any $r_0>0$.\\
(2) For any $r>0$, $\left( r h-(1-\xi)\int_0^r h\right)' >0$, and
 $$
 \lim_{r\to \infty} \int_0^r h=\infty, \quad  \lim_{r\to \infty} h=0, \quad \lim_{r\to \infty} \frac{rh}{\int_0^r h}=1-a.
 $$
(3) For any $\epsilon>0$, and for any $r_0>0$, there is $R>r_0$ such that
 	$$
 	\xi'(R)-\epsilon h(R)C(R)<0.
 	$$
 	(4) $\lim_{r\rightarrow \infty} h(r)C(r)=0$.\\
 	(5) For all $\epsilon>0$, there exists $\delta>0$ such that if $R\ge 3$, $\delta\ge \eta\ge 0$ is a smooth function with support in $[R-1, R+1]$, then for all $r\ge 0$,
 	$$
 	h(r)\le \bar{h}(r)\le (1+\epsilon)h(r), \quad\mbox{and}\, \int_0^r h\le \int_0^r \bar{h}\le (1+\epsilon)\int_0^r h,
 	$$
 	where $\bar{h}(r)=\exp(-\int_0^r \frac{\bar{\xi}}{t}dt)$ and $\bar{\xi}=\xi-\eta$.
 \end{lemma}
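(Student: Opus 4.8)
Throughout I would write $h(r)=\exp\!\big(-\int_0^r \xi(s)/s\,ds\big)$ and $G(r)=\int_0^r h(s)\,ds=rf(r)$, and use repeatedly the identity $rh'=-\xi h$, which is immediate from the definition of $\xi$. Since $0<\xi<1$ on $(0,\infty)$ it follows that $h'<0$, so $h$ is strictly decreasing; hence $G(r)>rh(r)$ for all $r>0$, i.e.\ $C>0$. For (1), fix $\epsilon\in(0,\min\{a,1-a\})$; since $\xi\to a$ there is $r_\epsilon$ with $a-\epsilon<\xi<a+\epsilon$ on $[r_\epsilon,\infty)$, and integrating $\xi(s)/s$ then yields constants $c_\epsilon,C_\epsilon>0$ with $c_\epsilon r^{-(a+\epsilon)}\le h(r)\le C_\epsilon r^{-(a-\epsilon)}$ for all large $r$; in particular $h(r)\to 0$. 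The asymptotic $h(r+r_0)/h(r)=\exp\!\big(-\int_r^{r+r_0}\xi(s)/s\,ds\big)\to 1$ follows since $0\le \int_r^{r+r_0}\xi(s)/s\,ds\le r_0/r\to 0$.

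For (2), differentiating $\Phi(r):=rh-(1-\xi)G$ and using $rh'=-\xi h$ gives $\Phi'=(rh'+\xi h)+\xi' G=\xi' G>0$ (and, since $\Phi(0)=0$, this re-proves $B>0$). The lower bound from (1) with $a+\epsilon<1$ forces $G(r)\to\infty$ and $rh(r)\ge c_\epsilon r^{1-a-\epsilon}\to\infty$. Then, because $(rh)'=(1-\xi)h$ and $G'=h$, the $\infty/\infty$ form of L'H\^opital gives $\lim_{r\to\infty}\frac{rh}{G}=\lim_{r\to\infty}(1-\xi(r))=1-a$; the statement $h\to 0$ is part of (1).

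For (4), $C>0$ and $G>rh$ give $0<hC=\frac{2h(G-rh)}{G^2}\le \frac{2h}{G}\to 0$. For (3) I would argue by contradiction: if the conclusion failed there would be $\epsilon>0$, $r_0>0$ with $\xi'(r)\ge \epsilon\,h(r)C(r)$ for all $r\ge r_0$, whence $\epsilon\int_{r_0}^\infty hC\le\int_{r_0}^\infty \xi'=a-\xi(r_0)<\infty$. But (2) gives $G-rh=G(1-rh/G)\ge \tfrac12 aG$ for $r$ large, so $hC\ge \frac{ah}{G}=a(\ln G)'$ there, and $\int^\infty(\ln G)'\,dr=\infty$ since $G\to\infty$ --- a contradiction.

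For (5), $\bar h(r)=\exp\!\big(-\int_0^r(\xi-\eta)(s)/s\,ds\big)=h(r)\exp\!\big(\int_0^r \eta(s)/s\,ds\big)$, so $\eta\ge 0$ yields $\bar h\ge h$; and since $\operatorname{supp}\eta\subset[R-1,R+1]$ with $R\ge 3$ and $0\le\eta\le\delta$ we get $\int_0^r \eta(s)/s\,ds\le \delta\int_{R-1}^{R+1}\frac{ds}{s}\le \frac{2\delta}{R-1}\le\delta$, hence $\bar h\le e^{\delta}h$. Taking $\delta$ with $e^\delta\le 1+\epsilon$ gives $h\le\bar h\le(1+\epsilon)h$ pointwise, and integrating this from $0$ to $r$ gives the claimed bounds for $\int_0^r\bar h$. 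The only genuinely delicate point is (3): the mechanism is that $\xi'$ is integrable at infinity (because $\xi$ has a finite limit) while $hC$ is not --- the divergence of $\int^\infty hC$ being extracted from the asymptotic $G-rh\sim aG$ of (2) together with $G\to\infty$. Everything else is bookkeeping with $rh'=-\xi h$ and the monotonicity of $h$.
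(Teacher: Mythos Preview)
Your argument is correct in all five parts. The paper does not actually prove this lemma: it simply cites the estimates from Huang--Tam (Lemma~4.1 of \cite{HT}) and Wu--Zheng \cite{WZ}, so there is no in-paper proof to compare against. Your write-up is a clean self-contained verification; the only item worth a remark is that in (3) your contradiction argument (integrability of $\xi'$ versus divergence of $\int^{\infty} hC$ via $hC \ge a(\ln G)'$) is perhaps slightly slicker than the pointwise estimates one typically sees, and is a nice way to organize the step.
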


Let $\phi$ be a cutoff function on $\mathbb{R}$ as in \cite{HT} such that

(i) $0\le \phi\le c_0$ with $c_0$ being an absolute constant;

(ii) $\mbox{supp} (\phi) \subset [-1, 1]$;

(iii) $\phi'(0)=1$ and $|\phi'|\le 1$.

The construction is to perturb $\xi$ into $\bar{\xi}(r)=\xi(r)-\alpha h(R)C(R)\phi(r-R)$ for suitable choice of $R$, $\alpha$. Note that this only changes the value of $\xi$ on a compact set. Once $\bar{h}$ is defined, equations (\ref{eq:A})--(\ref{eq:C}) define the corresponding curvature
components $\bar{A}, \bar{B}, \bar{C}$ of the perturbed metric.
\begin{theorem}\label{thm-example}
	There is $1>\alpha>0$ such that for any $r_0>0$ there is $R>r_0$ satisfying the following: If $\bar{\xi}(r)=\xi(r)-\alpha h(R)C(R)\phi(r-R)$, then $\bar{\xi}$ determines a complete $\mathsf{U}(m)$-invariant K\"ahler metric on $\mathbb{C}^m$ such that	
	\begin{enumerate}
		\item $\bar{A}+\bar{C}>0$ on $[R-1, R+1]$;
		\item $\bar{B}(r)>0$ for all $r$;
		\item $\bar{C}(r)>0$ for all $r$; and
\item $\bar{A}(R)+(m-1)\bar{B}(R)<0$.
			\end{enumerate}
		Then $\bar{\xi}$ will give a compete $\mathsf{U}(m)$-invariant K\"ahler metric which satisfies (NOB) but does not have nonnegative Ricci curvature, nor nonnegative holomorphic sectional curvature.
\end{theorem}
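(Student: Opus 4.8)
The plan is to realize the construction as a perturbation of a fixed background metric with positive bisectional curvature, following the strategy of Huang-Tam, and then verify each of the four numbered conditions using the estimates collected in Lemma~\ref{lm:HT}. Concretely, fix a smooth $\xi$ on $[0,\infty)$ with $\xi(0)=0$, $\xi'>0$, and $0<\xi<1$ on $(0,\infty)$, so that the associated $h$, $f$ define a complete $\mathsf U(m)$-invariant metric with positive bisectional curvature; then set $\bar\xi(r)=\xi(r)-\alpha\, h(R)C(R)\,\phi(r-R)$ where $\phi$ is the cutoff function with the three listed properties. The first thing to observe is that, since $\phi$ is supported in $[-1,1]$, $\bar\xi$ agrees with $\xi$ outside $[R-1,R+1]$; one must check that $\bar\xi$ still satisfies $\bar\xi(0)=0$ and $0<\bar\xi<1$ on $(0,\infty)$ (using part (4) of Lemma~\ref{lm:HT}, $h(R)C(R)\to 0$, so for $R$ large the perturbation is uniformly tiny), hence $\bar\xi$ determines a \emph{complete} $\mathsf U(m)$-invariant K\"ahler metric on $\mathbb C^m$ via the recipe $\bar h(r)=\exp(-\int_0^r \bar\xi/t\,dt)$, $\bar f(r)=\frac1r\int_0^r\bar h$, and formula (\ref{eq:g}).

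Next I would establish the four inequalities. By part (5) of Lemma~\ref{lm:HT}, choosing $R\ge 3$ large we may guarantee $\bar h$ and $\int_0^r\bar h$ stay within a factor $(1+\epsilon)$ of $h$ and $\int_0^r h$; since the original metric has $B>0$, $C>0$ everywhere (equivalently $rh-(1-\xi)\int_0^r h>0$ and $\int_0^r h - rh>0$ by (\ref{eq:B})--(\ref{eq:C})), and these are stable under a small $C^0$-perturbation of $h$ and its integral off a compact set, conditions (2) and (3) follow --- the only delicate region is $[R-1,R+1]$, where the explicit $(1+\epsilon)$-bounds keep the relevant quantities positive. For condition (4), one computes $\bar A(R)$ and $\bar B(R)$: at $r=R$ we have $\phi(0)=0$ and $\phi'(0)=1$, so $\bar\xi(R)=\xi(R)$ but $\bar\xi'(R)=\xi'(R)-\alpha h(R)C(R)$, which by part (3) of Lemma~\ref{lm:HT} can be made negative for suitable $R$; since $\bar A=\bar\xi'/\bar h$ and $\bar B$ is (to leading order, using $\bar h(R)\approx h(R)$) comparable to $B(R)>0$ but with $B(R)\to$ something controlled, one arranges $\bar A(R)+(m-1)\bar B(R)<0$ by taking $\alpha$ close enough to $1$ and then $R$ large. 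Finally, condition (1), $\bar A+\bar C>0$ on $[R-1,R+1]$: here one uses that $|\phi'|\le 1$ gives $|\bar\xi'-\xi'|\le \alpha h(R)C(R)$, so $\bar A\ge -\alpha h(R)C(R)/\bar h$, while $\bar C$ on this interval is bounded below by roughly $C$ there, and the asymptotics $h(R)C(R)\to 0$, $\frac{rh}{\int_0^r h}\to 1-a$ from Lemma~\ref{lm:HT} show $\bar C$ dominates; a short estimate of $\frac{\alpha h(R)C(R)}{\bar h(r)}$ versus $\bar C(r)$ on $[R-1,R+1]$, using $\bar h(r+r_0)/\bar h(r)\to 1$, closes it.

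Having the four numbered conditions, the conclusion is immediate from Proposition~\ref{prop-51}: condition (2) gives $\bar B>0$, condition (3) gives $\bar C>0$, and condition (1) gives $\bar A+\bar C>0$, so by part (ii) (valid also for $m=2$ in the ``$\Leftarrow$'' direction, as noted after the proposition) the metric has positive orthogonal bisectional curvature, i.e.\ (NOB). On the other hand condition (4) says $\bar A(R)+(m-1)\bar B(R)<0$, which by (\ref{eq:A})--(\ref{eq:B}) is exactly the statement that $Ric(E_1,\overline E_1)=\bar A(R)+(m-1)\bar B(R)<0$ at the point where $r=R$ on the first coordinate axis; so the Ricci curvature is negative somewhere, and a fortiori (since $Ric = H + Ric^{\perp}$ and $Ric^\perp\ge 0$ there) the holomorphic sectional curvature $\bar A$ must be negative at that point too --- or more directly, $\bar A(R)<0$ follows already from $\bar\xi'(R)<0$.

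I expect the main obstacle to be the simultaneous calibration of the two free parameters $\alpha\in(0,1)$ and $R$: one needs $\alpha$ large (close to $1$) and the term $\alpha h(R)C(R)$ to ``overpower'' $\xi'(R)$ so that $\bar A(R)$ and hence $Ric(E_1,\overline E_1)$ turns negative (condition (4)), while simultaneously keeping that same perturbation small enough in $C^0$ that the positivity of $\bar B$, $\bar C$, and $\bar A+\bar C$ is preserved on the overlap interval $[R-1,R+1]$ (conditions (1)--(3)). The tension is resolved precisely because part (3) of Lemma~\ref{lm:HT} lets $\xi'(R)$ be made as small as one likes relative to $h(R)C(R)$ by choosing $R$ in a suitable sequence, while parts (4) and (5) guarantee the perturbation's uniform smallness; making this trade-off quantitative --- i.e.\ fixing $\alpha$ first, then extracting the right $R$ from Lemma~\ref{lm:HT}(3)(5) --- is the crux of the argument, and the rest is the bookkeeping of plugging $\bar\xi$ into (\ref{eq:A})--(\ref{eq:C}) and comparing with the unperturbed quantities.
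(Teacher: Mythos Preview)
Your approach is essentially the same as the paper's --- the Huang--Tam perturbation strategy, with Lemma~\ref{lm:HT} supplying all the asymptotics --- and your identification of the crux (calibrating $\alpha$ and $R$ so that (1) and (4) hold simultaneously) is exactly right.

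Two small calibration points are worth noting. First, you suggest taking $\alpha$ ``close enough to $1$'' for (4), but in fact \emph{any} fixed $\alpha\in(0,1)$ suffices there: once Lemma~\ref{lm:HT}(3) gives $\xi'(R)<\epsilon\,h(R)C(R)$, you get $\bar A(R)\lesssim -\alpha C(R)$, and since $\bar B(R)\le \epsilon/\!\int_0^R h$ is negligible compared with $C(R)\sim 2a/\!\int_0^R h$, condition (4) follows. The real constraint on $\alpha$ comes from (1), where the estimate yields roughly $\bar A+\bar C\gtrsim(-\alpha + 1)C(R)$ up to $\epsilon$-errors; the paper simply fixes $\alpha=\tfrac12$. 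Second, your aside that $\phi(0)=0$ is not among the listed hypotheses on $\phi$ (only $\phi'(0)=1$ is), but it is irrelevant: the argument for (4) uses $\bar\xi'(R)=\xi'(R)-\alpha h(R)C(R)$, not $\bar\xi(R)$.

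The paper's route to (4) is slightly indirect --- it first shows $\bar A(R)+\tfrac13\bar C(R)\le 0$ and then bounds $\bar B(R)$ separately --- but this is bookkeeping, not a different idea. Your direct comparison of $\bar A(R)$ against $(m-1)\bar B(R)$ is equally valid.
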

   \begin{proof} Note that (1)-(3) implies the (NOB). The estimate (4) shows the negativity of the Ricci somewhere. First  for any $\alpha>0$, by choosing $R$ large, $\bar{\xi}$ (along with the $\bar{h}$ and $\bar{f}$) defines a complete K\"ahler metric on $\mathbb{C}^m$. Recall that $a\in (0, 1)$ is the limit of $\lim_{r\to \infty}\xi(r)$, $c_0$ being the bound of $|\phi|.$  The proof of (2) and (3) is exactly the same as in \cite{Ni-Niu2}, which does not involve the careful picking of $\alpha>0$. We need to choose the constant $\alpha>0$ a bit more carefully here to achieve both (1) and (4) simultaneously. Note that in \cite{HT}, metrics were constructed with both $\bar{A}(R)+\bar{C}(R)$ and $\bar{A}(R)+(m-1)\bar{B}(R)$ being negative.

   By (\ref{eq:A}) and (\ref{eq:C}) for (1) we only need to prove if for $r\in [R-1, R+1]$.
   By the formula (\ref{eq:C}) and the proof of Lemma 4.2 in \cite{HT} (precisely (4.6) of \cite{HT}), we may choose a large $r_1$ so that if $R>r_1$ and for $r\in [R-1, R+1]$,
	$$
	\bar{C}(r)\ge \frac{2}{(1+\epsilon)^2\int_0^R h}(a-2\epsilon+a\epsilon-\epsilon^2)
	$$
	provided $a-2\epsilon+a\epsilon-\epsilon^2>0$. For $\epsilon>0$ sufficiently small it clearly satisfies this condition. Here $a>$ is the constant from Lemma \ref{lm:HT}. On the other hand,
	$$
	C(R)\le \frac{2}{\int_0^R h}(a+\epsilon)
	$$
	if $r_1$ is large enough depending only on $\epsilon$ and $R>r_1$. Hence, if $\epsilon$ and $r_1$ satisfy the above conditions, then for $r\in [R-1, R+1]$,
	$$
	\bar{C}(r)\ge \frac{a-2\epsilon+a\epsilon-\epsilon^2}{(a+\epsilon)(1+\epsilon)^2}C(R).
	$$
	Therefore, if $\epsilon>0$ satisfies $a>\epsilon$ and $a-2\epsilon+a\epsilon-\epsilon^2>0$,  we can find $r_1>r_0$ such that if $R>r_1$, then it holds for $r\in  [R-1, R+1]$,
	\begin{eqnarray}
	\bar{A}(r)+\bar{C}(r)&\ge& \frac{\xi'(r)-\beta}{\bar{h}}+\bar{C}(r)\nonumber\\
	&\ge& \frac{-\beta}{\bar{h}(r)}+ \frac{a-2\epsilon+a\epsilon-\epsilon^2}{(a+\epsilon)(1+\epsilon)^2}C(R)\\
	&\ge & -\frac{\beta}{(1-\epsilon)h(R)}+\frac{a-2\epsilon+a\epsilon-\epsilon^2}{(a+\epsilon)(1+\epsilon)^2}C(R)\nonumber\\
	&= &\frac{1}{(1-\epsilon)h(R)}[-\beta+(1-\epsilon)\frac{a-2\epsilon+a\epsilon-\epsilon^2}{(a+\epsilon)(1+\epsilon)^2}h(R)C(R)]\nonumber.
	\end{eqnarray}
In the third line we have used the fact that $-\frac{\beta}{\bar{h}(r)}\ge -\frac{\beta}{h(r)}\ge -\frac{\beta}{h(R+1)}$ and $\lim_{r\to \infty} \frac{h(r)}{h(r+r_0)}=1$. Hence for $r\in  [R-1, R+1]$,
$$
\bar{A}(r)+\bar{C}(r)\ge \frac{1}{(1-\epsilon)h(R)}[-\alpha +(1-\epsilon)\frac{a-2\epsilon+a\epsilon-\epsilon^2}{(a+\epsilon)(1+\epsilon)^2}]h(R)C(R).
$$
Hence if we pick $\alpha=\frac{1}{2}$, for sufficiently small $\epsilon$ we can be sure that $\bar{A}(r)+\bar{C}(r)>0$.  This proves (1).

On the other hand, as in \cite{HT}, for $r_1\ge r_0$ sufficiently large and $R\ge r_1$,
\begin{eqnarray*}
\bar{C}(r)&=&\frac{2}{\int_0^r \bar{h}}\left(1-\frac{r\bar{h}}{\int_0^r \bar{h}}\right)\\
&\le& \frac{2}{\int_0^r \bar{h}}\left(1-\frac{r h}{(1+\epsilon)\int_0^r h}\right)\\
&\le& \frac{2}{\int_0^r h}\frac{a+2\epsilon}{1+\epsilon}.
\end{eqnarray*}
Here we have used part (ii) of Lemma \ref{lm:HT}. But
$$
C(r)= \frac{2}{\int_0^r h}\left(1-\frac{rh}{\int_0^r h}\right)\ge \frac{2}{\int_0^r h}(a-\epsilon).$$ Hence for small $\epsilon$
$$
\bar{C}(r)\le \frac{a+2\epsilon}{(a-\epsilon)(1+\epsilon)} C(r)
$$
This implies that
\begin{eqnarray*}
\bar{A}(R)+\frac{1}{3}\bar{C}(R)&=&\frac{\xi'(R)-\alpha C(R)h(R)}{\bar{h}(R)}+\frac13\bar{C}(R)\\
&\le&\frac{\xi'(R)-\alpha C(R)h(R)}{(1+\epsilon)h(R)}+\frac13 \frac{a+2\epsilon}{(a-\epsilon)(1+\epsilon)} C(R)\\
&=&\frac{1}{(1+\epsilon) h(R)}\left(\xi'(R)-\alpha C(R)h(R)+\frac13 \frac{a+2\epsilon}{a-\epsilon} C(R)h(R)\right).
\end{eqnarray*}
Noting part (iii) of Lemma \ref{lm:HT}, and that we picked $\alpha=\frac{1}{2}$, for sufficiently small $\epsilon$ we have that
\begin{equation}\label{eq:ric-hp1}
\bar{A}(R)+\frac{1}{3}\bar{C}(R)\le 0.
\end{equation}
On the other hand, similar calculation as the above shows that
\begin{eqnarray*}
\bar{C}(r)&\ge& \frac{2}{(1+\epsilon)\int_0^r h}\left(1-\frac{(1+\epsilon) rh}{\int_0^r h}\right)\\
&\ge& \frac{2}{(1+\epsilon)\int_0^r h} \left(1-(1+\epsilon)(1-a+\epsilon)\right)\\
&\ge&\frac{2(a-\epsilon)}{\int_0^r h}.
\end{eqnarray*}
Here $\epsilon$ is small and we may choose a different one in the last line. Thus together with (\ref{eq:ric-hp1}) we have
$$
\bar{A}(R)\le -\frac13 \frac{2(a-\epsilon)}{\int_0^r h}.
$$
On the other hand, as in Lemma 4.2 of \cite{HT}, for $R$ sufficiently large,
$$
\bar{B}(R)\le \frac{\epsilon}{\int_0^R h}.
$$
Combining them we conclude that $\bar{A}(R)+(m-1)\bar{B}(R)<0$ for $R\ge r_1$. This proves (4).
   \end{proof}

One could also construct $\mathsf{U}(m)$-invariant complete K\"ahler metrics on ${\mathbb C}^m$ with $B^{\perp }>0$ but $Ric\ngeq 0$ and $H\ngeq 0$, using the notations and the construction in the previous section. Below are the details.

For the sake of simplicity, we will work with the $m=2$ case. In this case, $B^{\perp}$ coincides with $Ric^{\perp}$, and by Proposition \ref{prop-51} and the remark afterwards, its positivity means $B>0$ and $A+C>0$. So to ensure that the Ricci and the holomorphic sectional curvature $H$ are not everywhere nonnegative, we need $A\ngeq 0$ and $A+B\ngeq 0$. That is, it suffices to find such a metric satisfying
$$B>0, \ \ C>0, \ \ A+C>0, \ \ A\ngeq 0, \ \ A+B\ngeq 0,$$
where the functions $A$, $B$, $C$ are expressed in terms of the $\alpha $ function by formulae in  $(\ref{eq:ch1})$.  As in the previous sections, we may start with the function
\begin{equation*}
\alpha (t) = \lambda \big(1 - \frac{1}{(1+t^2)^a}\big) ,
\end{equation*}
where $a$, $\lambda$ are positive constants with $a> \frac{1}{2}$. We will specify the range of $a$ and $\lambda $ later. As before, we have $\alpha (0)=0$, and
$$\alpha '=\frac{2a\lambda t}{(1+t^2)^{a+1}}.$$
 So $\alpha $ and $\alpha'$ are positive on $(0,\infty )$.  Also, the function $\alpha'$ and $\frac{\alpha }{t}$ are positive at $t=0$, so we have $B>0$, $C>0$ everywhere, while $A$ has the same sign with $t_0-t$, where $t_0=\sqrt{\frac{3}{2a-1}}$. In particular, $A\ngeq 0$.

As noticed before, for any $b>0$, we have
$$ (t^b \alpha')' = \frac{2a\lambda t^b}{(1+t^2)^{a+2}} \big( (b+1)+(b-1-2a)t^2\big). $$
This function will be positive on $(0,\infty )$ if $b\geq 1+2a$, and negative for large $t$ if $b<1+2a$.

In the following, we will take $a=6$. So $t_0=\sqrt{\frac{3}{11}}$. Clearly, we can choose $\lambda >0$ large enough so that $\alpha(t_0)>6$. Since $\alpha$ is strictly increasing, when $\alpha<6$, we must have $t<t_0$, thus $A>0$ hence $A+C>0$. While when $\alpha \geq 6$, we have
\begin{eqnarray*}
(A+C) (1+\alpha +t\alpha')^3 & = & t\alpha'' +2\alpha ' +  \frac{2\alpha }{t} (1+\alpha +t\alpha') \frac{(1+\alpha +t\alpha')^2}{(1+\alpha )^2} \\
& \geq & t\alpha'' +2\alpha ' +  \frac{2\alpha }{t} (1+\alpha +t\alpha') \\
& \geq & t\alpha'' +2\alpha ' +  12 \alpha' \ = \  t^{-13} (t^{14}\alpha')' \ > \ 0
\end{eqnarray*}
since $14>2a+1=13$. This demonstrates that $A+C>0$ everywhere. To see that $A+B\ngeq 0$, let us observe that the inequality
$$ t\alpha' \leq 2\alpha $$
always holds, as it is implied by
$$ \frac{at^2}{(1+t^2)^{a+1}} \leq 1 - \frac{1}{(1+t^2)^{a}}, $$
which is true since $1+at^2 \leq (1+t^2)^a$ for any $t$. So we now have
$$1+\alpha +t\alpha' \leq 3(1+\alpha).$$
Thus the quantity $(A+B) (1+\alpha +t\alpha')^3 $ can be estimated as
\begin{eqnarray*}
(A+B) (1+\alpha +t\alpha')^3 & = & t\alpha'' +2\alpha' + \big(\frac{1+\alpha +t\alpha'}{1+\alpha }\big)^2 \alpha' \\
& \leq & t\alpha'' +2\alpha' + 3^2\alpha' \\
& = &  t^{-10} (t^{11}\alpha')'.
\end{eqnarray*}
Since $11<2a+1=13$, $(t^{11}\alpha')' < 0$ when $t$ is large. So we have $A+B\ngeq 0$ as desired. This construction gives the metric which has $B^{\perp}>0$, but has negative holomorphic sectional curvature and negative Ricci curvature outside a compact subset.

\section{Appendix}

In this appendix, we will give the calculation of the curvature for the surface $M^2= {\mathbb P}^2 \# \overline{{\mathbb P}^2 }$, when the metric is the restriction of the product metric. Consider
$$ M^2 = \{ ([u_0\!:\!u_1\!:\!u_2], \ [v_1\!:\!v_2]) \in {\mathbb P}^2  \times {\mathbb P}^1 \mid u_1v_2 = u_2v_1 \}, $$
 and let $\omega_g$ be the restriction on $M$ of the product metric
 $$ \omega_g = \sqrt{\!-\!1} \partial \overline{\partial} \log (|u_0|^2 \!+\!|u_1|^2 \!+\! |u_2|^2) +   \lambda \sqrt{\!-\!1} \partial \overline{\partial} \log (|v_1|^2 \!+\!|v_2|^2 ) $$
where $\lambda >0$ is a constant. We will prove the following

\begin{proposition}
The surface $(M^2,\omega_g)$ will have its Ricci curvature positive everywhere if and only if $\lambda >\frac{1}{2}$, and it will have its holomorphic sectional curvature positive everywhere if and only if $\lambda >1$.
\end{proposition}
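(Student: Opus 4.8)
The plan is to reduce both assertions to an explicit one--variable calculation by exploiting symmetry. Note that $M^{2}$ is the blow-up of $\mathbb{P}^{2}$ at $p_{0}=[1:0:0]$, with blow-down $\pi\colon M^{2}\to\mathbb{P}^{2}$ and exceptional divisor $E=\pi^{-1}(p_{0})$; on the dense open set where $u_{0}\neq0$, $u_{1}\neq0$ (together with $E$) one has coordinates $(z,w)$ with $[u_{0}:u_{1}:u_{2}]=[1:z:zw]$, $[v_{1}:v_{2}]=[1:w]$, so the K\"ahler potential is $\Phi=\log P+\lambda\log Q$ with $P=1+|z|^{2}(1+|w|^{2})$, $Q=1+|w|^{2}$. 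A direct computation gives $g_{z\bar z}=Q/P^{2}$, $g_{z\bar w}=\bar z w/P^{2}$, $g_{w\bar w}=|z|^{2}(1+|z|^{2})/P^{2}+\lambda/Q^{2}$, and, using $P-|z|^{2}Q=1$,
\[
\det g=\frac{(1+\lambda)P-1}{P^{3}Q}.
\]
The group of holomorphic isometries of $(M^{2},\omega_{g})$ fixing $p_{0}$ (the stabilizer of $p_{0}$ in $PU(3)$, acting together with the induced $PU(2)$ on the $\mathbb{P}^{1}$--factor) acts with cohomogeneity one: its orbits are the level sets of the Fubini--Study distance from $p_{0}$, equivalently of $P$, with singular orbits $E$ (at $P=1$) and the strict transform $L_{\infty}$ of the line $\{u_{0}=0\}$ (the limit $P\to\infty$). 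Hence it suffices to test positivity of $\Ric$ and of $H$ along the slice $w=0$, $z=x\in[0,\infty)$, where $g$ is diagonal with $g_{z\bar z}=(1+x^{2})^{-2}$ and $g_{w\bar w}=R/(1+x^{2})$, $R:=\lambda+(1+\lambda)x^{2}$; the conclusion on $L_{\infty}$ follows by continuity of the curvature.

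For the Ricci curvature I would write $\rho=-\sqrt{-1}\,\partial\bar\partial\log\det g=-\sqrt{-1}\,\partial\bar\partial\log\!\big((1+\lambda)P-1\big)+3\sqrt{-1}\,\partial\bar\partial\log P+\sqrt{-1}\,\partial\bar\partial\log Q$, each summand being a pull-back of a Fubini--Study type nonnegative $(1,1)$--form. At the slice point $\rho$ is diagonal with
\[
\rho_{z\bar z}=\frac{3R^{2}-\lambda(1+\lambda)(1+x^{2})^{2}}{(1+x^{2})^{2}R^{2}},\qquad
\rho_{w\bar w}=\frac{\lambda(1+4x^{2})+3(1+\lambda)x^{4}}{(1+x^{2})R}.
\]
Thus $\rho_{w\bar w}>0$ for every $x$ and every $\lambda>0$, while the numerator of $\rho_{z\bar z}$, as a function of $s=x^{2}$, is an increasing quadratic (positive leading coefficient, positive derivative at $0$) with value $\lambda(2\lambda-1)$ at $s=0$; hence $\Ric>0$ everywhere $\iff\rho_{z\bar z}(0)>0\iff\lambda>\tfrac12$, and for $\lambda\le\tfrac12$ one has $\rho_{z\bar z}\le0$ on $E$, which gives the converse.

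For the holomorphic sectional curvature I would compute $R_{i\bar j k\bar l}=-\partial_{k}\partial_{\bar l}g_{i\bar j}+g^{p\bar q}\partial_{k}g_{i\bar q}\,\partial_{\bar l}g_{p\bar j}$ at the slice point. The residual isotropy there is a circle fixing $\partial_{z}$ and rotating $\partial_{w}$, so by invariance the only nonzero components are $R_{z\bar z z\bar z}$, $R_{w\bar w w\bar w}$, $R_{z\bar z w\bar w}$, and the calculation should reduce to
\[
H_{z}:=\frac{R_{z\bar z z\bar z}}{g_{z\bar z}^{2}}\equiv 2,\qquad
H_{w}:=\frac{R_{w\bar w w\bar w}}{g_{w\bar w}^{2}}=\frac{2\big((1+\lambda)x^{4}+2\lambda x^{2}+\lambda\big)}{R^{2}},\qquad
\widehat B:=\frac{R_{z\bar z w\bar w}}{g_{z\bar z}g_{w\bar w}}=\frac{(1+\lambda)x^{4}-\lambda}{R^{2}}.
\]
For $X=a\partial_{z}+b\partial_{w}$, putting $\xi=|a|^{2}g_{z\bar z}$, $\eta=|b|^{2}g_{w\bar w}$ and normalizing $\xi+\eta=1$, the vanishing of the mixed components gives $H(X)=\xi^{2}H_{z}+4\xi\eta\,\widehat B+\eta^{2}H_{w}$, a quadratic in $\xi\in[0,1]$ whose leading coefficient $H_{z}+H_{w}-4\widehat B=2\lambda D_{2}/R^{2}$, with $D_{2}(t)=(1+\lambda)t^{2}+(2\lambda+4)t+(\lambda+3)$, is positive and whose vertex lies in $(0,1)$. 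Therefore $\min_{X}H$ at the slice point equals the vertex value, which simplifies to $2N(x^{2})/\big(R^{2}D_{2}(x^{2})\big)$ with
\[
N(t)=(1+\lambda)^{2}t^{4}+4(1+\lambda)^{2}t^{3}+(6\lambda^{2}+9\lambda+3)t^{2}+(4\lambda^{2}+2\lambda)t+\lambda(\lambda-1).
\]
Since every coefficient of $N$ except the constant term $\lambda(\lambda-1)$ is positive, $N(t)>0$ for all $t\ge0$ precisely when $\lambda>1$; hence $H>0$ everywhere $\iff\lambda>1$, and for $\lambda\le1$ the direction realizing the vertex at $x=0$ has $H\le0$ on $E$, giving the converse.

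The only genuine work is the bookkeeping in the curvature computation: one must carry out these derivative computations carefully enough that $\rho_{z\bar z}$, $\rho_{w\bar w}$ and $N(t)$ emerge in the transparent shape above — in particular the identity $H_{z}\equiv2$ and the all--positive--coefficient form of $N$ — after which the sign analysis and the identification of $E$ as the extremal orbit are immediate. One should also note that the complement of the chosen chart (the curve $L_{\infty}$ and the part of $E$ with $v_{1}=0$) is a proper analytic subset reached as a limit of generic orbits, so the curvature conclusions extend there by continuity, or alternatively by repeating the computation in a second affine chart.
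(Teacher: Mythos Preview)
Your approach is correct and closely parallels the paper's. You use the same local chart and metric components, and your Ricci computation via $\log\det g$ reproduces exactly the paper's quadratic $f(t)=(\lambda+1)(2\lambda+3)t^{2}+4\lambda(\lambda+1)t+\lambda(2\lambda-1)$. The one substantive difference is in the holomorphic sectional curvature argument: the paper handles the sufficiency direction ($\lambda>1\Rightarrow H>0$) by dropping a nonnegative term in $R_{X\bar X X\bar X}$ and then applying the AM--GM inequality to obtain $R_{X\bar X X\bar X}\geq \frac{4a^{2}}{\eta^{3}}|x_{1}x_{2}|^{2}\geq 0$, whereas you compute the exact minimum of $H$ at each slice point as $2N(x^{2})/(R^{2}D_{2}(x^{2}))$ and read off positivity from the signs of the coefficients of $N$. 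Your route is more laborious but gives the precise minimum; the paper's is shorter once one spots the estimate.

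One point deserves care: your continuity argument for $L_{\infty}$ only yields nonnegativity, not strict positivity. The paper closes this by a direct computation in the chart $([z_{1}\!:\!1\!:\!z_{2}],[1\!:\!z_{2}])$ around a point of $L_{\infty}$, finding $R_{X\bar X X\bar X}=2(|x_{1}|^{2}+|x_{2}|^{2})^{2}+2\lambda|x_{2}|^{4}>0$ and Ricci eigenvalues $2+\frac{1}{1+\lambda}$ and $3$ there. You can either carry out that second-chart check, or note that along your slice the normalized quantities $\rho_{z\bar z}/g_{z\bar z}$, $\rho_{w\bar w}/g_{w\bar w}$, and $\min_{X}H$ have explicit \emph{positive} limits as $x\to\infty$ (for instance $\rho_{z\bar z}/g_{z\bar z}\to(2\lambda+3)/(1+\lambda)$ and $\rho_{w\bar w}/g_{w\bar w}\to 3/(1+\lambda)$); since $L_{\infty}$ is a single $U(2)$-orbit, those limits are the actual values there.
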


To see this, let us fix an arbitrary point $p\in M$. First let us consider the case when $u_0(p)\neq 0$. By a unitary change of coordinate in $(u_1,u_2)$ and $(v_1,v_2)$, we may assume that $p=([1\!:\!a\!:\!0], \ [1\!:\!0])$, where $a\in [0,\infty )$. So in a neighborhood of $p$, we have local holomorphic coordinate $(z_1,z_2)$ which corresponds to the point $([1\!:\!z_1\!:\! z_1z_2],  [1\!:\!z_2])$, and $p=(a,0)$. In this neighborhood, the metric $\omega_g$ becomes
$$ \omega_g = \sqrt{\!-\!1}\partial \overline{\partial} \log  \eta +  \lambda  \sqrt{\!-\!1}\partial \overline{\partial} \log \sigma $$
where $\sigma = 1+|z_2|^2$ and $\eta = 1+|z_1|^2\sigma = 1+ |z_1|^2 + |z_1z_2|^2$. We compute that
$$ g_{1\overline{1}} = \frac{\sigma }{\eta^2}, \ \ g_{1\overline{2}} = \frac{\overline{z}_1z_2}{\eta^2}, \ \  g_{2\overline{2}} = \frac{|z_1|^2(|z_1|^2\!+\!1)}{\eta^2} + \frac{\lambda }{\sigma^2} .$$
From this, we get
\begin{eqnarray*}
g_{1\overline{1},1} & = & -\frac{2}{\eta^3} \sigma^2 \overline{z}_1, \ \ \  \ \ \ \ g_{2\overline{2},2} \ = \  -\frac{2}{\eta^3}|z_1|^4(|z_1|^2\!+\!1)\overline{z}_2- \lambda\frac{2}{\sigma^3} \overline{z}_2,  \\
g_{1\overline{2},1} & = & -\frac{2}{\eta^3} \sigma \overline{z}_1^2z_2, \ \ \  \ \ \ \ \ \ \ \ g_{1\overline{2},2} \ = \  \frac{1}{\eta^2} \overline{z}_1 -\frac{2}{\eta^3}|z_1|^2|z_2|^2\overline{z}_1,  \\
g_{1\overline{2},\overline{1}} & = & \frac{1}{\eta^2} z_2 - \frac{2}{\eta^3} \sigma |z_1|^2 z_2, \ \ \ \ \  \ \ \ \ g_{1\overline{2},\overline{2}} \ = \  -\frac{2}{\eta^3}|z_1|^2\overline{z}_1z_2^2.
\end{eqnarray*}
Under the local coordinate $(z_1,z_2)$, the curvature components are given by
$$ R_{i\overline{j}k\overline{\ell}} = - g_{i\overline{j},k\overline{\ell}} + \sum_{p,q=1}^2 g_{i\overline{p},k} \ \overline{g_{j\overline{q},\ell }} \ g^{\overline{p}q}.$$
At the point $p=(a,0)$, we have $\eta=1+a^2$, $\sigma=1$, and
\begin{eqnarray*}
 && g_{1\overline{1}}=\frac{1}{\eta^2},  \ \ \  g_{1\overline{2}}=0,  \ \ \ g_{2\overline{2}}=\frac{a^2\!+\!\lambda \eta}{\eta}, \\
&&  g_{1\overline{2},1} = g_{2\overline{1},1} = g_{2\overline{1},2} = g_{1\overline{1},2} = g_{2\overline{2},2}=0.
\end{eqnarray*}
From these, we get that at $p$
$$ R_{1\overline{2}1\overline{2}} = - g_{1\overline{2},1\overline{2}} + \frac{1}{g_{1\overline{1}}} g_{1\overline{1},1} \ \overline{g_{2\overline{1},2}} + \frac{1}{g_{2\overline{2}}} g_{1\overline{2},1} \ \overline{g_{2\overline{2},2}} \ = \ 0 .$$
Similarly, we also get $ R_{1\overline{1}1\overline{2}}=  R_{1\overline{2}2\overline{2}} =0$ at $p$. Next, we compute at $p$
\begin{eqnarray*}
R_{1\overline{1}2\overline{2}} & = & - g_{1\overline{2},2\overline{1}} + \frac{1}{ g_{1\overline{1}}  } |g_{1\overline{1},2}|^2 + \frac{1}{ g_{2\overline{2}}  } |g_{1\overline{2},2}|^2 \\
& = &  -\frac{1}{\eta^2} + \frac{2a^2}{\eta^3} + 0 + \frac{\eta}{a^2+\lambda \eta } \ \frac{a^2}{\eta^4} \ = \  \frac{a^2-1}{\eta^3} + \frac{a^2}{\eta^3(a^2+\lambda \eta )}  , \\
R_{1\overline{1}1\overline{1}} & = & - g_{1\overline{1},1\overline{1}} + \frac{1}{ g_{1\overline{1}}  } |g_{1\overline{1},1}|^2 + \frac{1}{ g_{2\overline{2}}  } |g_{1\overline{2},1}|^2 \\
& = & \frac{2}{\eta^3} -\frac{6a^2}{\eta^4} + \eta^2 \ \frac{4a^2}{\eta^6} + 0 \ = \  \frac{2}{\eta^4}, \\
R_{2\overline{2}2\overline{2}} & = & - g_{2\overline{2},2\overline{2}} + \frac{1}{ g_{1\overline{1}}  } |g_{2\overline{1},2}|^2 + \frac{1}{ g_{2\overline{2}}  } |g_{2\overline{2},2}|^2 \\
& = & - g_{2\overline{2},2\overline{2}}  \ = \ \frac{2a^4}{\eta^2} + 2\lambda.
\end{eqnarray*}
Now let us compute the component of the Ricci curvature at $p$. We have $R_{1\overline{2}}=0$, and
\begin{eqnarray*}
R_{1\overline{1}} & = & \eta^2 R_{1\overline{1}1\overline{1}} + \frac{\eta}{a^2\!+\!\lambda\eta} R_{1\overline{1}2\overline{2}} \\
& = & \frac{2}{\eta^2} + \frac{a^2-1}{\eta^2(a^2\!+\!\lambda \eta ) } + \frac{a^2}{\eta^2(a^2\!+\!\lambda \eta )^2 },\\
R_{2\overline{2}} & = & \eta^2 R_{1\overline{1}2\overline{2}} + \frac{\eta}{a^2\!+\!\lambda \eta} R_{2\overline{2}2\overline{2}} \\
& = & \frac{a^2-1}{\eta} + \frac{a^2+2a^4+2\lambda \eta^2 }{\eta (a^2+\lambda \eta )}.
\end{eqnarray*}
Since $2\lambda \eta^2 > \lambda \eta$, we know that $R_{2\overline{2}} >0$ for all $a\geq 0$. For $R_{1\overline{1}}$, if we let $f(t)$ be the function of $t=a^2$ which represents the quantity $\eta^2(a^2+\lambda \eta )^2R_{1\overline{1}}$, then
$$ f(t) = (\lambda +1)(2\lambda +3)t^2 + 4\lambda (\lambda +1)t +\lambda (2\lambda -1). $$
Hence  $R_{1\overline{1}}>0$ for all $a\geq 0$ if and only if $\lambda > \frac{1}{2}$.

Next let us examine the holomorphic sectional curvature $H$ at the point $p$. For any tangent direction $X=x_1 \frac{\partial}{\partial z_1}+x_2\frac{\partial}{\partial z_2}$ at $p$, we have
\begin{eqnarray}
 R_{X\overline{X}X\overline{X}} & =  & |x_1|^4R_{1\overline{1}1\overline{1}} + |x_2|^4R_{2\overline{2}2\overline{2}} +4|x_1x_2|^2R_{1\overline{1}2\overline{2}}\label{eq:a1}\\
 & = & \frac{2}{\eta^4}|x_1|^4 + \frac{2}{\eta^2} (a^4\!+\!\lambda \eta^2) |x_2|^4 + \frac{4}{\eta^3} \big(a^2\!-\!1\!+\!\frac{a^2}{a^2\!+\!\lambda \eta }  \big)  |x_1x_2|^2. \nonumber
\end{eqnarray}
In particular, when $a=0$, we have
$$ R_{X\overline{X}X\overline{X}} = 2|x_1|^4+2\lambda |x_2|^4 - 4|x_1x_2|^2, $$
so if $\lambda < 1$, then there are $X\neq 0$ with $R_{X\overline{X}X\overline{X}}<0$, while when $\lambda =1$, we have $R_{X\overline{X}X\overline{X}}\geq 0$ but attains $0$. Now suppose that $\lambda >1$. If $x_2=0$, then $R_{X\overline{X}X\overline{X}}> 0$. If $x_2\neq 0$, then by (\ref{eq:a1}) we have
\begin{eqnarray*}
R_{X\overline{X}X\overline{X}} &\geq & \frac{2}{\eta^4} |x_1|^4 + 2\lambda |x_2|^4 - \frac{4}{\eta^3} |x_1x_2|^2 \\
& > & \frac{2}{\eta^4} |x_1|^4 + 2 |x_2|^4 - \frac{4}{\eta^3} |x_1x_2|^2 \\
&\geq & 2\sqrt{ \frac{2}{\eta^4}|x_1|^4 \ 2|x_2|^4} - \frac{4}{\eta^3} |x_1x_2|^2\\
& = & \frac{4}{\eta^2} |x_1x_2|^2 - \frac{4}{\eta^3} |x_1x_2|^2 \ = \  \frac{4a^2}{\eta^3} |x_1x_2|^2 \ \geq \ 0.
\end{eqnarray*}
So when $\lambda >1$, the holomorphic sectional curvature at $p$ is positive.

Now let us assume that $u_0(p)=0$, namely, $p$ lies in the line at infinity with respect to the point of blowing up. Again by a simultaneous unitary coordinate change on the $(u_1,u_2)$ and $(v_1,v_2)$ if necessary, we may assume that $p=([0\!:\!1\!:\!0], [1\!:\!0])$. Let us choose holomorphic coordinate $(z_1,z_2)$ near $p$ by letting it correspond to the point $(z_1\!:\!1\!:\!z_2], [1\!:\!z_2])$. Then $p=(0,0)$, and the metric in this case is given by
$$ \omega_g = \sqrt{\!-\!1}\partial \overline{\partial} \log  (1\!+\!|z_1|^2\!+\!|z_2|^2) +  \lambda  \sqrt{\!-\!1}\partial \overline{\partial} \log (1\!+\!|z_2|^2). $$
Again if we denote by $\eta = 1\!+\!|z_1|^2\!+\!|z_2|^2$ and $\sigma = 1\!+\!|z_2|^2$, then we have
\begin{eqnarray*}
 g_{i\overline{j}} & = & \frac{1}{\eta} \delta_{ij} - \frac{1}{\eta^2} \overline{z}_iz_j +\frac{\lambda }{\sigma^2} \delta_{i2}\delta_{j2}, \\
 g_{i\overline{j},k} & = & - \frac{1}{\eta^2} ( \delta_{ij}\overline{z}_k +\delta_{kj}\overline{z}_i)+\frac{2}{\eta^3}\overline{z}_i\overline{z}_kz_j -\frac{2\lambda}{\sigma^3}\overline{z}_2\delta_{i2}\delta_{j2}\delta_{k2}.
 \end{eqnarray*}
At $p=(0,0)$, we have $g_{1\overline{1}}=1$, $g_{1\overline{2}}=0$, $g_{2\overline{2}}=1+\lambda$, and $g_{i\overline{j},k}=0$. So the curvature components at $p$ are given by
$$ R_{i\overline{j}k\overline{\ell} } = - g_{ i\overline{j},k\overline{\ell} } = \delta_{ij} \delta_{k\ell } +  \delta_{i\ell } \delta_{jk} + 2\lambda \delta_{i2} \delta_{j2} \delta_{k2} \delta_{\ell 2}. $$
So for any tangent direction $X$ at $p$, the holomorphic sectional curvature
$$ R_{X\overline{X}X\overline{X}} = 2 (|x_1|^2+|x_2|^2)^2 + 2\lambda |x_2|^4 ,$$
which is always positive. For the Ricci curvature, one has $R_{1\overline{2}}=0$, and
\begin{eqnarray*}
R_{1\overline{1}} & = & R_{1\overline{1}1\overline{1}} + \frac{1}{1+\lambda } R_{1\overline{1}2\overline{2}} \ = \ 2 + \frac{1}{1+\lambda }, \\
R_{2\overline{2}} & = & R_{1\overline{1}2\overline{2}} + \frac{1}{1+\lambda } R_{2\overline{2}2\overline{2}} \ = \ 1 + \frac{1}{1+\lambda } (2+2\lambda ) \ = \ 3.
\end{eqnarray*}
So the Ricci curvature at $p$ is also always positive. This completes the proof of the proposition.

\end{document}